\begin{document}

\newtheorem{thm}{Theorem}
\newtheorem{lemma}{Lemma}
\newtheorem{prop}{Proposition}
\newtheorem{defn}{Definition}
\newtheorem{rem}{Remark}
\newtheorem{step}{Step}
\newtheorem{cor}{Corollary}

\newcommand{\Cov}{\mathop {\rm Cov}}
\newcommand{\Var}{\mathop {\rm Var}}
\newcommand{\E}{\mathop {\rm E}}
\newcommand{\const }{\mathop {\rm const }}
\everymath {\displaystyle}

\newcommand{\ruby}[2]{
\leavevmode
\setbox0=\hbox{#1}
\setbox1=\hbox{\tiny #2}
\ifdim\wd0>\wd1 \dimen0=\wd0 \else \dimen0=\wd1 \fi
\hbox{
\kanjiskip=0pt plus 2fil
\xkanjiskip=0pt plus 2fil
\vbox{
\hbox to \dimen0{
\small \hfil#2\hfil}
\nointerlineskip
\hbox to \dimen0{\mathstrut\hfil#1\hfil}}}}

\def\qedsymbol{$\blacksquare$}
\renewcommand{\thefootnote }{\fnsymbol{footnote}}
\renewcommand{\refname }{References}

\everymath {\displaystyle}

\title{
Stock Price Fluctuations in an Agent-Based Model with Market Liquidity
}
\author{Takashi Kato
\footnote{Division of Mathematical Science for Social Systems, 
              Graduate School of Engineering Science, 
              Osaka University, 
              1-3, Machikaneyama-cho, Toyonaka, Osaka 560-8531, Japan, 
E-mail: \texttt{kato@sigmath.es.osaka-u.ac.jp}}
}

\date{}
\maketitle 

\begin{abstract}
We study an agent-based stock market model with heterogeneous agents and friction. 
Our model is based on that of \cite {Foellmer-Schweizer}: 
the process of a stock price in a discrete-time framework is determined by temporary equilibria 
via agents' excess demand functions, 
and the diffusion approximation approach is applied to characterize 
the continuous-time limit (as transaction intervals shorten) as 
a solution of the corresponding stochastic differential equation (SDE). 
In this paper we further make the assumption that 
some of the agents are bound by either short sale constraints or budget constraints. 
Then we show that the continuous-time process of the stock price can be derived from 
a certain SDE with oblique reflection. 
Moreover we find that the short sale (respectively, budget) constraint causes 
overpricing (respectively, underpricing). \\
\footnote[0]{Mathematical Subject Classification (2010) \  91B24, 91B69, 60F17}\\
\footnote[0]{JEL Classification (2010) \ D53, C69}
{\bf Keywords}: 
Agent-based models, Liquidity problems, Short sale/budget constraints, 
Stochastic differential equations with oblique reflection, The Skorokhod problem. 
\end{abstract}

\section{Introduction}\label{sec_intro}

It is usual in mathematical finance to describe the price evolution of a risky asset such as a stock 
by a diffusion process. 
Geometric Brownian motion (GBM) is one of the most standard such processes for price fluctuation. 
Because of its simplicity and convenience, the GBM model is widely used in the context of 
option pricing/hedging, optimal investment, and many other financial problems. 
An important theme is to justify GBM from the economic viewpoint. 
For instance, a heuristic equilibrium argument for GBM is discussed in \cite {Samuelson}. 
The justification of GBM as the rational expectations equilibrium is discussed in \cite{Bick} and \cite{Kreps}. 

Recently there have been various studies of agent-based market models to explain 
the fluctuation of a price process. 
One representative study is the microeconomic approach of 
\cite {Foellmer} and \cite {Foellmer-Schweizer}: 
The process of the stock price is 
first given as a sequence of temporary price equilibria in a discrete-time market model 
with heterogeneous agents and then 
the price process in a continuous-time model is 
derived as the limit as the transaction time intervals shorten. 
Let us introduce the outline of the model of \cite {Foellmer-Schweizer}. 
Let $I$ be the set of agents in the market and 
$\hat{e}^{n, i}_k(p, \omega )$ be trader $i$'s excess demand function for a proposed stock price $p$ at the time $k/n$, 
where $n\in \Bbb {N}$ and trades are executed at times $t = 0, 1/n, 2/n, \ldots $. 
The parameter $\omega $ is a sample point in the underlying probability space $(\Omega , \mathcal {F}, P)$. 
The stock price process $(S^n_{k/n})_{k=0, 1, 2, \ldots }$ is given as follows: 
At the initial time $t = 0$, the stock price is given by $S^n_0 = s_0$. 
Then agents exhibit their excess demand $\hat{e}^n_{i, 0}(\cdot , \omega )$ and 
the next price is determined as a temporary equilibrium, that is, the solution $p^*$ of 
\begin{eqnarray*}
\sum _{i\in I}\hat{e}^n_{i, 0}(p^*(\omega ), \omega ) = 0. 
\end{eqnarray*}
After transactions at $t = 0$, the stock price changes to $S^n_{1/n} = p^*$. 
Similarly, during the trading period $k/n$, 
the stock price before transactions is given by $S^n_{k/n}$ and 
agents' excess demands $(\hat{e}^{n, i}_k(\cdot , \omega ))_{i\in I}$ make 
the stock price change to $S^n_{(k+1)/n}$. 
Finally, the process $(S^n_{k/n})_k$ is given as the solution of
\begin{eqnarray}\label{eq_system_S}
\sum _{i\in I}\hat{e}^{n, i}_k(S^n_{(k+1)/n}(\omega ), \omega ) = 0, \ \ k = 0, 1, 2, \ldots , \ \ S^n_0 = s_0. 
\end{eqnarray}
For mathematical convenience we rewrite (\ref {eq_system_S}) by using the log price 
$X^n_k = \log S^n_k$ of the stock, obtaining 
\begin{eqnarray}\label{eq_system_X}
\sum _{i\in I}e^{n, i}_k(X^n_{(k+1)/n}(\omega ), \omega ) = 0, \ \ k = 1, 2, \ldots , \ \ X^n_0 = x_0, 
\end{eqnarray}
where $e^{n, i}_k(x, \omega ) = \hat{e}^{n, i}_k(e^x, \omega )$ and $x_0 = \log s_0$. 
In \cite {Foellmer} and \cite {Foellmer-Schweizer}, 
the individual excess demand function $e^{n, i}_k$ is assumed to be given by
\begin{eqnarray}\label{def_e_f}
\begin{split}
e^{n, i}_k(x, \omega ) &= \alpha ^{n, i}_k(f^{n, i}_k(x, \omega ) - x) + \delta ^{n, i}_k(\omega ), \\
f^{n, i}_k(x, \omega ) &= X^n_{k/n} + \beta ^{n, i}_k(X^n_{k/n} - F_i) + \gamma ^{n, i}_k(X^n_{k/n} - x). 
\end{split}
\end{eqnarray}
The parameter $\delta ^{n, i}_k(\omega )$ denotes the liquidity demand, 
$f^{n, i}_k(x)$ is the reference level of agent $i$, and 
$F_i$ is agent $i$'s individual perception of the fundamental (log-)value. 
For a more precise economic interpretation of (\ref {def_e_f}), 
see Example 3) in Section 2 of \cite {Foellmer-Schweizer}. 
In this case, (\ref {eq_system_X}) can be rewritten using the stochastic difference equation 
\begin{eqnarray}\label{diff_eq_X}
X^n_{(k+1)/n} - X^n_{k/n} = \bar{\beta }^n_k(X^n_{k/n} - \bar{F}) + \bar{\delta }^n_k, \ \ 
k = 0, 1, 2, \ldots , 
\end{eqnarray}
where 
\begin{eqnarray*}
\bar{\beta }^n_k = \frac{\sum _{i\in I}\alpha ^{n, i}_k\beta ^{n, i}_k}{\bar{\alpha }^n_k}, \ \ 
\bar{F}^n_k = \frac{\sum _{i\in I}\alpha ^{n, i}_k\beta ^{n, i}_kF_i}{\bar{\alpha }^n_k}, \ \ 
\bar{\delta }^n_k = \frac{\sum _{i\in I}\delta ^{n, i}_k}{\bar{\alpha }^n_k}, \ \ 
\bar{\alpha }^n_k = \sum _{i\in I}\alpha ^{n, i}_k(1 + \gamma ^{n, i}_k). 
\end{eqnarray*}
Let $(X^n_t)_{t\geq 0}$ be an interpolated process of $(X^n_{k/n})_k$ such as either 
\begin{eqnarray}\label{step_interpolation}
X^n_t = X^n_{k/n}\ \ t\in (k/n, (k+1)/n) 
\end{eqnarray}
or
\begin{eqnarray}\label{linear_interpolation}
X^n_t = (nt - k)X^n_{(k+1)/n} + (k + 1 - nt)X^n_{k/n}\ \ t\in (k/n, (k+1)/n). 
\end{eqnarray}
Then, under some mathematical assumptions, 
the process $(X^n_t)_t$ converges to an Ornstein--Uhlenbeck (OU) process of the form 
\begin{eqnarray*}
dX_t = \bar{\beta }(X_t - \bar{F})dt + \bar{\sigma }dB_t, \ \ X_0 = x_0. 
\end{eqnarray*}
(This is a simplified version of the result of \cite {Foellmer-Schweizer}: 
They also treat a generalized process in a random environment.) 
This implies that the continuous-time stock price $S_t = \exp (X_t)$ is 
a geometric OU process. 

It is meaningful to consider the above diffusion approximation 
approach to derive the continuous-time process in a more general framework. 
A diffusion approximation for solutions of stochastic difference equation in the following form 
\begin{eqnarray}\label{diff_eq_X_general}
X^n_{(k+1)/n} - X^n_{k/n} = \frac{1}{\sqrt{n}}F^n_k(X^n_{k/n}, \omega ) + 
\frac{1}{n}G^n_k(X^n_{k/n}, \omega ), 
\end{eqnarray}
with $\E [F^n_k(x)] = 0$, is studied in \cite {Kushner-Huang} and \cite {Watanabe1}--\cite {Watanabe3} 
under some mixing conditions. 
In \cite {Kato}, a case of the functional difference equation 
\begin{eqnarray}\label{diff_eq_X_general2}
X^n_{(k+1)/n} - X^n_{k/n} = \frac{1}{\sqrt{n}}F^n_k((X^n_r)_{r\leq k/n}, \omega ) + 
\frac{1}{n}G^n_k((X^n_r)_{r\leq k/n}, \omega ) 
\end{eqnarray}
is studied under strong mixing conditions and a certain additional dimensional condition. 
By using these results, we can apply the diffusion approximation approach 
to the agent-based market model for more general excess demand functions 
and derive several stock price models based on the framework of \cite {Foellmer-Schweizer}. 

The aim of this paper is to construct an agent-based model of stock prices with market liquidity problems. 
In the real market, although there are agents who can buy and sell the stock freely to some extent, 
there also exist agents who cannot trade to their own satisfaction because of a shortage of cash, being prohibited from short selling, and so on. 
To consider how such a liquidity problem affects things, 
we construct a market model based on \cite {Foellmer-Schweizer} under the following constraints: 
\begin{description}
 \item[(I)] \ Some of the agents cannot sell the more of the stock than the number of shares held (a short sale constraint), 
 \item[(II)] \ Some of the agents cannot buy more of the stock higher than allowed by their budget. (a budget constraint). 
\end{description}
In each case (I)--(II), we will show that 
the continuous-time process of the stock price 
which is derived by shortening the transaction intervals 
is the solution of a certain stochastic differential equation with oblique reflection (SDER). 
Moreover, the value of the stock price under the short sale constraint 
is larger than it would be without such a constraint. 
The effect of the short sale constraint is discussed in 
\cite {Bai-Chang-Wang}, \cite {Diamond-Verrecchia}, \cite {Harrison-Kreps} 
and the references therein, 
and our result is consistent with a common expectation, viz.,  that 
the short sale constraint causes overpricing 
(nonetheless, \cite {Bai-Chang-Wang} pointed out that 
whether short sale constraints will always lead to overpricing is far from certain). 
On the other hand, we will also show that a budget constraint drives the stock price down. 

We now fix some notation. 
For an interval $A\subset [0, \infty )$, 
we denote by $C(A ; \Bbb {R}^d)$ the set of continuous functions from $A$ to $\Bbb {R}^d$ 
and we use the abbreviations $\mathcal {C}^d_T = C([0, T] ; \Bbb {R}^d)$ and 
$\mathcal {C}^d = C([0, \infty ) ; \Bbb {R}^d)$ 
(when $d = 1$, we simply write $\mathcal {C}_T$ and $\mathcal {C}$.) For $w\in \mathcal {C}^d$, 
we set $|w|_T = \sup _{0\leq t\leq T}|w(t)|$ and 
$|w|_\infty = \sup _{t\geq 0}|w(t)|$, where $|\cdot |$ is the Euclidean norm. 
We also define the following subspaces of $\mathcal {C}^d_T$. 
\begin{eqnarray*}
\mathcal {C}^d_{T,+} &=& \{ w = (w^i)^d_{i=1}\in \mathcal {C}^d_T\ ; \ w^i(t)\geq 0, \ \ i = 1, \ldots , d\}, \\
\mathcal {C}^d_{T,\uparrow 0} &=& \{ w = (w^i)^d_{i=1}\in \mathcal {C}^d_T\ ; \ w^i(0) = 0, w^i(t) 
\mbox { is non-decreasing in }t, \ \ i = 1 , \ldots , d \}, 
\end{eqnarray*}
and similarly $\mathcal {C}^d_+$ and $\mathcal {C}^d_{\uparrow 0}$ for $T = \infty $. 
We introduce the canonical $\sigma $-algebra $\mathcal {B}_t = \sigma ( w(s)\ ; \ s\leq t )$ 
of $\mathcal {C}$. 
We often consider the space of $\Bbb {R}^{1+N}$-valued functions and then 
we start the index at zero, i.e., $w(t) = (w^i(t))^N_{i = 0}$. 
The notation $(x)_+$ stands for the positive part of $x$, that is, 
$(x)_+ = \max \{ x, 0 \}$. 

\section{Main Results}\label{sec_model}
\subsection{Model I: Stock Price Model with Short Sale Constraints}\label{subsec_short_sale}
Let $I = \{1, \ldots , N\}$ be a finite set of agents who are active in the market 
which consists of a single stock. 
We assume that each agent $i\in I$ always stays in the market and 
no new agents enter. 
First we consider the discrete trading case, with 
market clearing times $t = 1/n, 2/n, \ldots $ for $n \in \Bbb {N} = \{1, 2, 3, \ldots \}$. 
We denote by $X^n = (X^n_t)_{t\geq 0}$ the log-price process of a stock, and 
by $\varphi ^{n, i} = (\varphi ^{n, i}_t)_{t\geq 0}$ the number of shares of the stock held by agent $i\in I$. 
The fluctuations of the processes $X^n$ and $\varphi ^n = (\varphi ^{n, i})^N_{i = 1}$ are found as follows.
At the initial time $t = 0$, every agent $i\in I$ has $\varphi ^{n, i}_0 = \Phi ^i\geq 0$ shares of a stock 
and the initial log-price of the stock is $X^n_0 = x_0\in \Bbb {R}$. 
After trading is finished for the period $t = k/n$, 
the log-price $X^n_t$, $t\in (k/n, (k+1)/n]$ is determined by the market clearing condition 
\begin{eqnarray}\label{market_clearing}
\sum _{i\in I}e^{n, i}_k(X^n_{(k+1)/n}, X^n, \omega ; \varphi ^{n, i}_{k/n}) = 0 
\end{eqnarray}
and linear interpolation (\ref {linear_interpolation}), 
where $e^{n, i}_k(x, w, \omega  ; \varphi ) : 
\Bbb {R}\times \mathcal {C}\times \Omega ^n \times \Bbb {R} \longrightarrow \Bbb {R}$ 
is the excess demand function of $i\in I$ at $t = k/n$ 
on the underlying probability space $(\Omega ^n, \mathcal {F}^n, P^n)$, 
and $\varphi ^{n, i}_k$ is the quantity of stock held by $i$. 
Here $e^{n, i}_k(x, w, \omega  ; \varphi )$ is assumed to be 
$\mathcal {B}(\Bbb {R})\otimes \mathcal {B}_{k/n}\otimes \mathcal {F}^n\otimes \Bbb {R}$-measurable, so that 
\begin{eqnarray*}
e^{n, i}_k(x, w, \omega  ; \varphi ) = e^{n, i}_k(x, w(\cdot \wedge (k/n)), \omega  ; \varphi ). 
\end{eqnarray*}
After determining the log-price up to $t = (k+1)/n$ by (\ref {market_clearing}), 
the agents' holdings of the stock are set by 
\begin{eqnarray}\label{fluctuation_phi}
\varphi ^{n, i}_{(k+1)/n} = \varphi ^{n, i}_{k/n} + e^{n, i}_k(X^n_{(k+1)/n}, X^n, \omega ; \varphi ^{n, i}_{k/n}) 
\end{eqnarray}
and 
\begin{eqnarray}\label{linear_interpolation_phi}
\varphi ^{n, i}_t = (nt - k)\varphi ^{n, i}_{(k+1)/n} + (k + 1 - nt)\varphi ^{n, i}_{k/n},\ \ t\in (k/n, (k+1)/n) 
\end{eqnarray}
for $i = 1, \ldots , N$. 
We remark that (\ref {market_clearing}) implies that the total number of shares of the stock in the market is constant, i.e.,
$\sum ^N_{i = 1}\varphi ^{n, i}_t = \sum ^N_{i = 1}\Phi ^i$ for any $n$ and $t$. 

All agents have their individual excess demand $\tilde{e}^{n, i}_k( x, w, \omega )$ 
before considering friction, but 
they do not always exhibit $\tilde{e}^{n, i}_k( x, w, \omega )$ itself to the market. 
Agents are divided into two groups, $I = I_1\cup I_2$, where 
$I_1 = \{1, \ldots , N_1\}$ and $I_2 = \{ N_1 + 1, \ldots , N \}$ for some $1\leq N_1\leq N$. 
Agents in the first group are prohibited from 
selling short,
thus their excess demand at $t = k/n$ is not lower than $\varphi ^{n, i}_{k/n}$ 
and the process of their stock holdings is always non-negative. 
Agents in the second group $I_2$ are allowed to sell short, 
so they can exhibit the value $\tilde{e}^{n, i}_k(\cdot , X^n, \omega )$ itself as their excess demand. 
Then the excess demand function is finally defined as 
\begin{eqnarray}\label{def_e}
e^{n, i}_k(x, w, \omega  ; \varphi ) = 
\left\{
\begin{array}{ll}
 \max \{ \tilde {e}^{n, i}_k(x, w, \omega ), -\varphi \} , 	& i\in I_1	\\
 \tilde {e}^{n, i}_k(x, w, \omega ), 	& i\in I_2. 
\end{array}
\right.
\end{eqnarray}

We divide $\tilde{e}^{n, i}_k(x, w)$ into two parts. 
\begin{eqnarray}\label{form_e}
\tilde{e}^{n, i}_k(x, w) = 
f^{n, i}_k(x, w) + g^{n, i}_k(w). 
\end{eqnarray}
Here, 
\begin{eqnarray*}
f^{n, i}_k(x, w) = \tilde{e}^{n, i}_k(x, w) - \tilde{e}^{n, i}_k(w(k/n), w), \ \ 
g^{n, i}_k(w) = \tilde{e}^{n, i}_k(w(k/n), w). 
\end{eqnarray*}
The function $g^{n, i}_k(w)$ is the excess demand when the price of the stock does not change. 
We further divide $g^{n, i}_k(w)$ into 
\begin{eqnarray*}
g^{n, i}_k(w) = \frac{1}{n}\bar{g}^{n, i}_k(w) + \frac{1}{\sqrt{n}}\tilde{g}^{n, i}_k(w), 
\end{eqnarray*}
where $\tilde{g}^{n, i}_k(w)$ is a mean zero random function. 
We remark that 
$g^{n, i}_k$, $\bar{g}^{n, i}_k$ and $\tilde{g}^{n, i}_k$ are all 
$\mathcal {B}_{k/n}\otimes \mathcal {F}^n$-measurable. 
The function $f^{n, i}_k(x, w)$ is an additional excess demand associated with the change of the price. 
Usually the higher the stock price becomes, the lower is an agent's demand for the stock. 
Thus it is natural that $f^{n, i}_k(x, w)$ is supposed to be decreasing in $x$. 
Furthermore, we assume
\begin{description}
 \item[\mbox{[A1]}] \ 
The function $f^{n, i}_k(x, w)$ is deterministic (i.e., independent of $\omega \in \Omega ^n$), 
continuous in $(x, w)$, and three times continuously differentiable in $x$. 
Moreover, there exist positive constants $K_0$ and $\delta _0$ such that 
\begin{eqnarray}\label{cond_diff}
-K_0 \leq \frac{\partial }{\partial x}f^{n, i}_k(x, w) \leq  -\delta _0. 
\end{eqnarray}
Furthermore, $\bar{g}^{n, i}_k(w)$ and $\tilde{g}^{n, i}_k(w)$ are continuous in $w$ almost surely. 
\end{description}

Condition [A1] implies that when $I_2$ is not empty (that is, when $N_1 < N$), the equation 
\begin{eqnarray}\label{temp_equi}
\sum _{i\in I}e^{n, i}_k(x, w ; \varphi ^i) = 0
\end{eqnarray}
has a unique solution $x$ for any fixed $w$ and $(\varphi ^i)_i$, since 
the left-hand side of (\ref {temp_equi}) is strictly decreasing in $x$. 
In fact, the existence of a unique solution of (\ref {temp_equi}) is also guaranteed 
even if $I = I_1$, provided $\sum ^N_{i = 1}\Phi ^i > 0$. 
However, hereafter we always assume $N_1 < N$ for some technical reasons which we will explain later. 

By (\ref {market_clearing}) and (\ref {fluctuation_phi}), 
we can construct the processes $(X^n_t)_t$ and $(\varphi ^n_t)_t$. 
We are interested in the limit of the 
$(1+N)$-dimensional process $(\Xi ^n_t)_t = (X^n_t, \varphi ^{n, 1}_t, \ldots , \varphi ^{n, N}_t)_t$ 
as $n\rightarrow \infty $. More precisely, we consider the weak limit of the distribution 
$\mu ^n = P(\Xi ^n\in \cdot )$ on $\mathcal {C}^{1+N}$. 

We will define more conditions. 
\begin{description}
 \item[\mbox{[A2]}] \ For every $M > 0$, there exists a positive constant $C_M > 0$ such that 
\begin{eqnarray*}
\sum ^3_{l = 0}\sup _{|x|, |w|_\infty \leq M}
\left | \frac{\partial ^l}{\partial x^l}f^{n, i}_k(x, w)\right |  + 
\E \hspace{0mm}^n[\sup _{|w|_\infty \leq M}|\bar{g}^{n, i}_k(w)| ^{24}] + 
\E \hspace{0mm}^n[\sup _{|w|_\infty \leq M}|\tilde{g}^{n, i}_k(w)| ^{24}] \leq  C_M 
\end{eqnarray*}
for any $n, k\in \Bbb {N}$ and $i = 1, \ldots , N$, 
where $\E \hspace{0mm}^n$ is the expectation with respect to $P^n$ 
(we simply denote this by $\E $ when there is no possibility of confusion). 
 \item[\mbox{[A3]}] \ The $\sigma $-algebras 
$\sigma (\bar{g}^{n, i}_k, \tilde{g}^{n, i}_k\ ; \ i = 1, \ldots , N)$, $k = 1, 2, \ldots $, 
are independent. 
 \item[\mbox{[A4]}] \ Let 
\begin{eqnarray*}
&&\alpha ^{n, i}_k(w) = - \frac{\partial }{\partial x}f^{n, i}_k(w(k/n), w), \ \ 
\beta ^{n, i}_k(w) = \E \hspace{0mm}^n[\bar{g}^{n, i}_k(w)], \\ 
&&\gamma ^{n, i}_k(w) = \frac{\partial ^2}{\partial x^2}f^{n, i}_k(w(k/n), w), \ \ 
a^{n, ij}_k(w) = \E \hspace{0mm}^n[\tilde{g}^{n, i}_k(w)\tilde{g}^{n, j}_k(w)]. 
\end{eqnarray*}
For each $i, j\in I$ the following limits exist 
\begin{eqnarray*}
&&\beta ^i(t, w) = \lim _{r\rightarrow \infty }\beta ^{n, i}_{[nt]}(w), \ \ 
\gamma ^i(t, w) = \lim _{r\rightarrow \infty }\gamma ^{n, i}_{[nt]}(w), \\
&&a^{ij}(t, w) = \lim _{r\rightarrow \infty }a^{n, ij}_{[nt]}(w)
\end{eqnarray*}
uniformly on any compact subset of $\mathcal {C}$ and for any $t \geq 0$, and 
\begin{eqnarray*}
\alpha ^i(t, w) = \lim _{r\rightarrow \infty }\alpha ^{n, i}_{[nt]}(w)
\end{eqnarray*}
uniformly on any compact subset of $[0, \infty )\times \mathcal {C}$. 
 \item [\mbox{[A5]}] 
Define $Q^n_k(w) = (Q^{n, ij}_k(w))_{1\leq i\leq N, 1\leq j\leq N_1}$ as 
\begin{eqnarray*}
Q^{n, ij}_k(w) = 
(1 - \delta _{ij})\alpha ^{n, i}_k(w)\tilde{\alpha }^{n, j}_k(w), 
\end{eqnarray*}
where $\delta _{ij}$ is the Kronecker delta and 
\begin{eqnarray*}
\bar{\alpha }^n_k(w) = \sum ^N_{i = 1}\alpha ^{n, i}_k(w), \ \ 
\tilde{\alpha }^{n, j}_k(w) = 1 / (\bar{\alpha }^n_k(w) - \alpha ^{n, j}_k(w)). 
\end{eqnarray*}
There exists $V = (V^{ij})^{N_1}_{i, j = 1}\in \Bbb {R}^{N_1}\otimes \Bbb {R}^{N_1}$ 
such that 
$V^{ii} = 0$, $Q^{n, ij}_k(w)\leq V^{ij}$ for each $i, j = 1, \ldots , N_1$ and 
$\rho (V) < 1$, where $\rho (V)$ denotes the spectral radius of $V$. 
\end{description}

Note that the inequality $N_1 < N$ is essential for condition [A5]. 
Indeed, if $N_1 = N$, then the calculation 
\begin{eqnarray*}
\sum ^N_{i=1}((Q^n_k(w))^m)^{ij} &=& 
\sum ^N_{l_{m-1}=1}\cdots \sum ^N_{l_1=1}\sum ^N_{i=1}
Q^{n, il_1}_k(w)Q^{n, l_1l_2}_k(w)\cdots Q^{n, l_{n-1}j}_k(w)\\
&=& 
\sum ^N_{l_{m-1}=1}\cdots \sum ^N_{l_1=1}
Q^{n, l_1l_2}_k(w)\cdots Q^{n, l_{n-1}j}_k(w)
= \cdots = 1, \ \ m\in \Bbb {N}
\end{eqnarray*}
indicates $\rho (Q^n_k(w)) = \lim _{m\rightarrow \infty }||(Q^n_k(w))^m||^{1/m}_1 = 1$, 
where $((Q^n_k(w))^m)^{ij}$ is the $i,j$th element of the $m$th power of the matrix $Q^n_k(w)$ 
and the norm $||\cdot ||_1$ stands for $||A||_1 = \max _j\sum ^N_{i=1}|A^{ij}|$ for 
$A = (A^{ij})_{ij}\in \Bbb {R}^N\otimes \Bbb{R}^N$, 
thus we cannot find such a matrix $V$ in [A5]. 
\begin{description}
 \item[\mbox{[A6]}] \ 
Let $\sigma (t, w) = (\sigma ^{ij}(t, w))^N_{i,j=1}$ be an $N$-dimensional matrix-valued function such that 
$a^{ij}(t, w) = \sum ^N_{m=1}\sigma ^{im}(t, w)\sigma ^{jm}(t, w)$. 
For any $T>0$, there exists a positive constant $C_T$ such that 
\begin{eqnarray*}
|\tilde{\beta }^i(t, w)| + |\tilde {\gamma }^i(t, w)| + |\sigma ^{ij}(t, w)| \leq C_T(1 + |w|_t)
\end{eqnarray*}
for each $i, j = 1, \ldots , N$, $0\leq t\leq T$ and $w\in \Bbb {R}$, 
where 
\begin{eqnarray*}
\tilde{\beta }^i(t, w) = \beta ^i(t, w) + \tilde{\gamma }^i(t, w), \ \ 
\tilde{\gamma }^i(t, w) = \frac{\gamma ^i(t, w)}{2\bar{\alpha }(t, w)^2}\sum ^N_{k, l = 1}a^{kl}(t, w). 
\end{eqnarray*}
\end{description}

We now introduce an SDER.
\begin{eqnarray}\label{SDE_reflection}
dX_t &=& \hat{b}^0(t, X)dt + \sum ^N_{j = 1}\hat{\sigma }^{0j}(t, X)dB^j_t + \sum ^{N_1}_{j = 1}
\tilde{\alpha }^j(t, X)dL^j_t , 
\ \ X_0 = x_0, \\\nonumber 
d\varphi ^i_t &=& \hat{b}^i(t, X)dt + \sum ^N_{j = 1}\hat{\sigma }^{ij}(t, X)dB^j_t + 
1_{I_1}(i)dL^i_t - \sum ^{N_1}_{j = 1}Q^{ij}(t, X)dL^j_t, 
\ \ \varphi ^i_0 = \Phi ^i,\ \ i=1, \ldots , N, 
\end{eqnarray}
where 
\begin{eqnarray*}
\tilde{\alpha }^i(t, w) = 1 / (\bar{\alpha }(t, w) - \alpha ^i(t, w)), \ \ 
Q^{ij}(t, w) = (1 - \delta _{ij})\alpha ^i(t, w)\tilde{\alpha }^j(t, w) 
\end{eqnarray*}
and $\hat{b}^i(t, w)$, $\hat{\sigma }^{ij}(t, w)$, $0\leq i\leq N, 1\leq j\leq N$ are given by 
\begin{eqnarray}\label{def_hat_b}
\hat{b}^i(t, w) &=& \left\{
                  \begin{array}{ll}
                   \sum ^N_{j = 1}\tilde{\beta }^j(t, w) / \bar{\alpha }(t, w)& (i=0)	\\
                   \tilde{\beta }^i(t, w) - \alpha ^i(t, w)\sum ^N_{j = 1}\tilde{\beta }^j(t, w)/\bar{\alpha }(t, w)	& (i\geq 1) , 
                  \end{array}
                  \right. \\\label{def_hat_sigma}
\hat{\sigma }^{ij}(t, w) &=& \left\{
                  \begin{array}{ll}
                   \sum ^N_{k=1}\sigma ^{kj}(t, w)/\bar{\alpha }(t, w)	& (i=0)	\\
                   \sigma ^{ij}(t, w) - \alpha ^i(t, w)\sum ^N_{k=1}\sigma  ^{kj}(t, w)/\bar{\alpha }(t, w)	& (i\geq 1) 
                  \end{array}
                  \right. 
\end{eqnarray}
with $\bar{\alpha }(t, w) = \sum ^N_{i = 1}\alpha ^i(t, w)$. 
We say that an $(1 + N + N_1)$-dimensional continuous adapted stochastic process 
$(X_t, \varphi _t, L_t)_t = (X_t, (\varphi ^i_t)^N_{i=1}, (L^i_t)^{N_1}_{i=1})$ is 
a solution of (\ref {SDE_reflection}) on a given filtered space 
$(\Omega , \mathcal {F}, (\mathcal {F}_t)_t, P)$ 
equipped with an $N$-dimensional $(\mathcal {F}_t)_t$-Brownian motion 
$B_t = (B^i_t)^n_{i=1}$ if 
\begin{itemize}
 \item $P(\varphi \in \mathcal {C}^N_+) = P(L\in \mathcal {C}^{N_1}_{\uparrow 0}) = 1$, 
 \item $\int ^\infty _01_{\{ \varphi ^i_r > 0\} }dL^i_r = 0$ for $i = 1, \ldots , N_1$ 
almost surely, 
 \item The processes $(X_t)_t$, $(\varphi _t)_t$, $(L_t)_t$ and $(B_t)_t$ satisfy 
\begin{eqnarray}\label{SDER_int}
X_t &=& x_0 + \int ^t_0\hat{b}^0(r, X)dr + \sum ^N_{j=1}\int ^t_0\hat{\sigma }^{0j}(r, X)dB^j_r + 
\sum ^{N_1}_{j = 1}\int ^t_0\tilde{\alpha }^j(r, X)dL^j_r, \\\nonumber 
\varphi ^i_t &=& \Phi ^i + \int ^t_0\hat{b}^i(r, X)dr - \sum ^N_{j = 1}\int ^t_0\hat{\sigma }^{ij}(r, X)dB^j_r + 
1_{I_1}(i)L^i_t - \sum ^{N_1}_{j = 1}\int ^t_0Q^{ij}(r, X)dL^j_r
\end{eqnarray}
for $t\geq 0$ and $i = 1, \ldots , N$ almost surely. 
\end{itemize}
We call the process $L = (L^i_t)_{i\in I_1, t\geq 0}$ 
a regulator associated with $\Xi = (X, \varphi )$. 
Now we present our final assumption. 

\begin{description}
 \item[\mbox{[A7]}] \ A solution of (\ref {SDE_reflection}) is unique in law. 
\end{description}

For instance, if $\partial f^n_k/\partial x$, $i = 1, \ldots , N$, 
is constant (and so is the matrix $Q$), condition [A7] holds 
under a Lipschitz condition on the coefficients $\hat{b}^i$ and $\hat{\sigma }^{ij}$ 
(see \cite {Czarkowski-Slominski} 
for instance:
although the form of our SDE (\ref {SDE_reflection}) is a little special, 
the arguments in these papers also works.) 
For other sufficient conditions for [A7], see \cite {Dupuis-Ishii2} and \cite {Piera-Mazumdar}. 

By [A7], the distributions $\mu = P(\Xi \in \cdot )$ and $\nu = P((\Xi , L)\in \cdot  )$ are unique if exist. 
We are now prepared to state our main result. 

\begin{thm} \ \label{th_converge}Assume $[A1]$--$[A7]$. 
Then $\mu $ and $\nu $ exist and the distribution $\mu ^n$ converges weakly to $\mu $ on 
$\mathcal {C}^{1+N}$ as $n\rightarrow \infty $. 
\end{thm}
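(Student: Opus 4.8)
The plan is to run the standard four-step programme for diffusion approximations of driven (here, path-dependent) difference equations — (i) rewrite the discrete market dynamics as a difference equation of Skorokhod type, (ii) prove tightness of the laws, (iii) identify every weak limit point as a solution of (\ref{SDE_reflection}) via an associated martingale problem, (iv) conclude by the uniqueness hypothesis [A7] — the one genuinely new feature being the oblique reflection produced by the short-sale constraint (\ref{def_e}). For step (i), fix a step $k$ and set, for $i\in I_1$, $\Delta L^{n,i}_k:=\bigl(-\varphi^{n,i}_{k/n}-\tilde e^{n,i}_k(X^n_{(k+1)/n},X^n)\bigr)_+\ge 0$, so that $e^{n,i}_k=\tilde e^{n,i}_k+\Delta L^{n,i}_k$ on $I_1$, $e^{n,i}_k=\tilde e^{n,i}_k$ on $I_2$, and (\ref{market_clearing}) reads $\sum_{i\in I}\tilde e^{n,i}_k(X^n_{(k+1)/n},X^n)+\sum_{i\in I_1}\Delta L^{n,i}_k=0$. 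A second-order Taylor expansion of $f^{n,i}_k$ around $w(k/n)$ (legitimate by the $C^3$-in-$x$ part of [A1] and the derivative bounds in [A2]) together with $g^{n,i}_k=\frac1n\bar g^{n,i}_k+\frac1{\sqrt n}\tilde g^{n,i}_k$ lets one solve the clearing equation for $X^n_{(k+1)/n}-X^n_{k/n}$ and then, via (\ref{fluctuation_phi}), for $\varphi^{n,i}_{(k+1)/n}-\varphi^{n,i}_{k/n}$; the outcome is a difference equation of the schematic form $\Xi^n_{(k+1)/n}-\Xi^n_{k/n}=\frac1n b^n_k+\frac1{\sqrt n}\xi^n_k+R^n_k\,\Delta L^n_k$ with $\xi^n_k$ conditionally centred (by $\E^n[\tilde g^{n,i}_k]=0$ and [A3]), with $b^n_k$ already containing the second-order Itô-type correction $\tilde\gamma^i$ of [A6], and with the reflection coefficient $R^n_k$ having $(\tilde\alpha^{n,j}_k)_{j\in I_1}$ in the $X$-row and $(1_{I_1}(i)\delta_{ij}-Q^{n,ij}_k)_{j\in I_1}$ in the $\varphi^i$-row — precisely the directions in (\ref{SDE_reflection}) (the occurrence of $\tilde\alpha^j=1/(\bar\alpha-\alpha^j)$ rather than $1/\bar\alpha$ reflecting that a binding agent drops out of the price-elastic pool). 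Because determining which agents in $I_1$ bind, and by how much, is a linear complementarity problem in the unknown $\Delta L^n_k$, condition [A5] ($V^{ii}=0$, $Q^{n,ij}_k\le V^{ij}$, $\rho(V)<1$) is exactly the polyhedral contractivity (``completely-$\mathcal S$'') hypothesis that makes this problem uniquely solvable and yields, through a Neumann series in $V$, the bound $\sum_{i\in I_1}\Delta L^{n,i}_k\le C\cdot(\text{magnitude of the free increment at step }k)$; this is where $N_1<N$ is indispensable, since by the computation following [A5] one has $\rho(V)=1$ whenever $N_1=N$.

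Second, I would prove tightness of $\{\mu^n\}$ on $\mathcal C^{1+N}$ and of the joint laws $\nu^n$ of $(\Xi^n,L^n)$, where $L^{n,i}_t$ interpolates $\sum_{m\le nt}\Delta L^{n,i}_m$. From the difference equation of step (i), the moment bounds [A2] (exponent $24$, needed because the coefficients are path-dependent, cf.\ the functional diffusion approximation of \cite{Kato} and (\ref{diff_eq_X_general2})), the block-independence [A3] and the linear growth [A6], a Gronwall--Burkholder argument on the recursion — with the reflection term absorbed by the [A5] estimate — gives uniform bounds $\sup_n\bigl(\E^n[|\Xi^n|_T^{p}]+\E^n[(L^{n}_T)^{p}]\bigr)<\infty$ and increment bounds of order $h^{p/2}$ over intervals of length $h$, for $p$ comfortably above $2$. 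Kolmogorov's criterion then yields tightness of $\mu^n$, and since each $L^{n,i}$ is non-decreasing with $L^{n,i}_0=0$, tightness of terminal values upgrades to tightness of $\nu^n$ on $\mathcal C^{1+N}\times\mathcal C^{N_1}_{\uparrow 0}$.

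Third, along a subsequence take $(\Xi^n,L^n)\Rightarrow(\Xi,L)=(X,\varphi,L)$ and, by Skorokhod representation, assume the convergence is almost sure. Summation by parts on the recursion, together with [A3], the coefficient convergence [A4] and the growth bound [A6], identifies predictable compensators and brackets in the limit and shows that $X_t-x_0-\int_0^t\hat b^0(r,X)dr-\sum_{j\le N_1}\int_0^t\tilde\alpha^j(r,X)dL^j_r$ and $\varphi^i_t-\Phi^i-\int_0^t\hat b^i(r,X)dr-1_{I_1}(i)L^i_t+\sum_{j\le N_1}\int_0^tQ^{ij}(r,X)dL^j_r$ are continuous martingales whose joint quadratic variation is $\int_0^\cdot(\hat\sigma\hat\sigma^{\top})(r,X)\,dr$ with $\hat b^i,\hat\sigma^{ij}$ as in (\ref{def_hat_b})--(\ref{def_hat_sigma}); on a possibly enlarged space one then finds an $N$-dimensional Brownian motion $B$ realizing the stochastic integrals in (\ref{SDER_int}). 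The conditions $\varphi\in\mathcal C^N_+$ and $L\in\mathcal C^{N_1}_{\uparrow 0}$ are preserved under the limit, and the complementarity $\int_0^\infty 1_{\{\varphi^i_r>0\}}dL^i_r=0$ follows because in the discrete scheme $\Delta L^{n,i}_k>0$ forces $\varphi^{n,i}$ to be $O(n^{-1/2})$ on $[k/n,(k+1)/n]$, whence $\int_0^\infty\varphi^i_r\,dL^i_r=\lim_n\sum_k\varphi^{n,i}_{k/n}\Delta L^{n,i}_k=0$, which with $\varphi^i\ge0$ and $L^i$ non-decreasing gives the support condition. Hence $(\Xi,L)$ solves (\ref{SDE_reflection}); in particular a solution exists, so by [A7] it is unique in law, every subsequential limit of $\nu^n$ (and hence of $\mu^n$) equals $\nu$ (resp.\ $\mu$), and the whole sequence $\mu^n$ converges weakly to $\mu$.

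The main obstacle is not the martingale identification but steps (i)--(ii): showing that the coupled, \emph{nonlinear}, state-dependent obliquely-reflected discrete system is well posed with a regulator $L^n$ controlled uniformly in $n$ by the driving noise. Everything downstream — the tightness estimates, the Itô-correction bookkeeping that produces $\tilde\gamma^i$ and the $\tilde\alpha^j$ reflection directions, and the passage of the oblique reflection and the complementarity to the limit — is laborious but routine once one has, from [A5], a uniform Lipschitz/boundedness bound for the discrete Skorokhod map on the positive orthant with $X^n$-dependent reflection directions; extracting that bound from $\rho(V)<1$, uniformly over $n$, $k$ and $w$, is the genuinely delicate estimate.
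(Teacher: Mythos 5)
Your architecture — rewrite the clearing condition as a discrete Skorokhod system with regulator increments $(-\varphi-\tilde e)_+$, extract a Lipschitz bound for the discrete oblique Skorokhod map from [A5] via the spectral condition $\rho(V)<1$, prove tightness, identify limit points through the martingale problem and the martingale representation theorem, and close with [A7] — is exactly the paper's. Two remarks, one substantive.

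The substantive gap is in your step (ii). You assert that a Gronwall--Burkholder argument on the untruncated recursion yields $\sup_n\bigl(\E^n[|\Xi^n|_T^p]+\E^n[(L^n_T)^p]\bigr)<\infty$. But [A2] only provides moment bounds of the form $\E^n[\sup_{|w|_\infty\leq M}|\bar g^{n,i}_k(w)|^{24}]\leq C_M$ and $\sup_{|x|,|w|_\infty\leq M}|\partial_x^l f^{n,i}_k|\leq C_M$, i.e.\ bounds that are \emph{local} in the path variable with constants depending on $M$; there is no global growth hypothesis on $\bar g, \tilde g, f$ at the discrete level ([A6] constrains only the limiting coefficients). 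A direct Gronwall iteration on the recursion therefore has nothing to feed on outside a bounded set of paths. The paper resolves this by a two-stage limit: it first replaces $\tilde e^{n,i}_k$ by a truncated version $\tilde e^{n,M,i}_k$ that degenerates to the linear demand $-\alpha^{n,i}_k(w)(x-w(k/n))$ when $|w(k/n)|\geq M$, proves all moment bounds, tightness and the identification of the limit as a solution of a truncated SDER for fixed $M$ (Propositions 1--14), and only then establishes uniform-in-$M$ bounds using the linear growth in [A6] applied to the limiting coefficients (Propositions 15--17) and lets $M\to\infty$. Without this truncation (or an equivalent localization) your uniform moment and increment estimates are not justified by the stated assumptions, and everything downstream of step (ii) is conditional on them.

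Two smaller points. First, you attribute to [A5] the unique solvability of the one-step complementarity problem; in the paper the clearing price is unique simply because $\sum_i e^{n,i}_k(\cdot,w;\varphi^i)$ is strictly decreasing in $x$ (from (\ref{cond_diff}) and $N_1<N$), and [A5] enters only later, through Shashiashvili's lemma, to give the uniform Lipschitz estimate $\sum_i|L^{n,i}_{k/n}-L^{n,i}_{l/n}|^2\leq\hat K\max_m\sum_i|Y^{n,i}_{m/n}-Y^{n,i}_{l/n}|^2$. Second, for the complementarity condition the correct observation is that $\hat\eta^{n,i}_k>0$ forces $\varphi^{n,i}_{(k+1)/n}=0$ exactly (not merely $O(n^{-1/2})$), so $\sum_k\varphi^{n,i}_{(k+1)/n}\hat\eta^{n,i}_k=0$ identically and one only has to control the discrepancy introduced by the linear interpolation of $\varphi^{n,i}$, which is what the paper's estimate (\ref{temp_comp_2}) does.
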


The proof is in Section \ref {sec_proof_th_converge}. 
Theorem \ref {th_converge} implies that the limit of 
$\Xi ^n = (X^n, \varphi ^{n, 1}, \ldots , \varphi ^{n, N})$ 
is characterized as the solution of an SDER, viz., (\ref {SDE_reflection}). 
The regulator process $L$ prevents the shares of stock of an agent $i$ 
from taking a negative value. 
The infinitesimal term $1_{I_1}(i)dL^i_t$ in (\ref {SDE_reflection}) works only when 
the agent $i\in I_1$ hopes to sell more  of the stock than they hold.

Here we consider the case where all agents in the market can sell short
(i.e., $N_1 = 0$). 
This is a version of Theorem 1 of \cite {Kato} and 
$X^n$ converges weakly to the unique solution $\hat{X}$ of 
\begin{eqnarray}\label{SDE_1dim}
d\hat{X}_t = \hat{b}^0(t, \hat{X})dt + \sum ^N_{j=1}\hat{\sigma }^{0j}(t, \hat{X})dB^j_t, 
\ \ \hat{X}_0 = x_0. 
\end{eqnarray}
In this case, no agents are bound by the short sale prohibition, 
and the process $(\hat{X}_t)_t$ represents the log-price of the stock without friction. 

On the other hand, when $I_1$ is not empty, 
agents in $I_1$ may not be able to exhibit their primary excess demand 
(which is described as $\tilde{e}^{n, i}_k$ in the discrete-time model.) 
The (log-)price $X_t$ is pushed up by the gap between the actual excess demand 
with the primary excess demand, so $X_t$ is larger than $\hat{X}_t$. 
To describe such a phenomenon, we give the following additional assumption. 

\begin{description}
 \item[\mbox{[A8]}] \ There exist measurable functions $\tilde{\sigma }^j(t, x)$, $j = 1, \ldots , N$, 
such that $\hat{\sigma }^{0j}(t, w) = \tilde{\sigma }^j(t, w(t))$ for each $t$ and 
\begin{eqnarray*}
\sum ^N_{j = 1}|\tilde{\sigma }^j(t, x) - \tilde{\sigma }^j(t, y)| \leq \rho (|x-y|), \ \ 
t\geq 0, \ x, y\in \Bbb {R}^N
\end{eqnarray*}
for some strictly increasing continuous function $\rho : [0, \infty )\longrightarrow [0, \infty )$ 
satisfying $\rho (0) = 0$ and 
\begin{eqnarray*}
\int _{(0, \infty )}\frac{1}{\rho (\xi )^2}d\xi  < \infty . 
\end{eqnarray*} 
 \item[\mbox{[A9]}] \ The pathwise uniqueness of solutions of (\ref {SDE_1dim}) holds. 
\end{description}

\begin{thm} \ \label{th_comparison}Assume $[A1]$--$[A9]$. 
Let $(X, \varphi , L)$ 
(resp., $\hat{X}$) be a solution of (\ref {SDE_reflection}) (resp., (\ref {SDE_1dim})) 
on a given filtered space $(\Omega , \mathcal {F}, (\mathcal {F}_t)_t, P)$ equipped with an $N$-dimensional Brownian motion $B$. 
Then $X_t\geq \hat{X}_t$ for any $t\geq 0$ almost surely. 
\end{thm}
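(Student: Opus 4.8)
The plan is to recognise the price component of a solution of $(\ref{SDE_reflection})$ as a solution of the unreflected one-dimensional equation $(\ref{SDE_1dim})$ pushed up by a continuous non-decreasing process, and then to run a Tanaka/Yamada--Watanabe estimate on the difference $Y_t:=\hat X_t-X_t$.

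First I would record the sign of the reflecting input. By [A1] we have $\delta_0\le\alpha^{n,i}_k(w)\le K_0$, so [A4] yields $\delta_0\le\alpha^i(t,w)\le K_0$; since $N_1<N$ forces $N\ge 2$, for $j=1,\dots,N_1$ we get $\bar\alpha(t,w)-\alpha^j(t,w)=\sum_{i\ne j}\alpha^i(t,w)\ge\delta_0>0$, hence $\tilde\alpha^j(t,w)=1/(\bar\alpha(t,w)-\alpha^j(t,w))\in(0,1/\delta_0]$. Consequently $A_t:=\sum_{j=1}^{N_1}\int_0^t\tilde\alpha^j(r,X)\,dL^j_r$ is a continuous adapted process with $A_0=0$ which is non-decreasing, because $L\in\mathcal C^{N_1}_{\uparrow 0}$. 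Thus the first line of $(\ref{SDER_int})$ reads
\begin{eqnarray*}
X_t=x_0+\int_0^t\hat b^0(r,X)\,dr+\sum_{j=1}^N\int_0^t\hat\sigma^{0j}(r,X)\,dB^j_r+A_t ,
\end{eqnarray*}
so $X$ solves $(\ref{SDE_1dim})$ perturbed by the non-decreasing input $A$, while $\hat X$ solves $(\ref{SDE_1dim})$ itself, both on the same filtered space, with the same Brownian motion $B$ and the same initial condition $x_0$. I would also note that $(\ref{def_hat_b})$, $(\ref{def_hat_sigma})$ together with [A6] and $N\delta_0\le\bar\alpha\le NK_0$ give $\hat b^0$ and $\hat\sigma^{0j}$ linear growth in $|w|_t$, so neither $X$ nor $\hat X$ explodes and localisation at $\tau_M=\inf\{t:|X|_t\vee|\hat X|_t>M\}$ is available.

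Next I would apply the Tanaka--Meyer formula to $(Y_t)_+$ (note $Y_0=0$, and the claim is $(Y_t)_+\equiv 0$):
\begin{eqnarray*}
(Y_t)_+ &=& \int_0^t 1_{\{Y_r>0\}}\big(\hat b^0(r,\hat X)-\hat b^0(r,X)\big)\,dr
+\int_0^t 1_{\{Y_r>0\}}\sum_{j=1}^N\big(\hat\sigma^{0j}(r,\hat X)-\hat\sigma^{0j}(r,X)\big)\,dB^j_r \\
&&{}-\int_0^t 1_{\{Y_r>0\}}\,dA_r+\tfrac12\,\ell_t ,
\end{eqnarray*}
where $\ell_t$ is the local time of $Y$ at $0$. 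The reflecting contribution $-\int_0^t 1_{\{Y_r>0\}}\,dA_r$ is $\le 0$: this is exactly the place where $\tilde\alpha^j>0$ and the monotonicity of the $L^j$ are used, and it is the analytic manifestation of the short-sale push moving the price in the right direction. For the local time, [A8] gives $\hat\sigma^{0j}(r,w)=\tilde\sigma^j(r,w(r))$, whence
\begin{eqnarray*}
d\langle Y\rangle_r=\sum_{j=1}^N\big(\tilde\sigma^j(r,\hat X_r)-\tilde\sigma^j(r,X_r)\big)^2\,dr
\le\Big(\sum_{j=1}^N\big|\tilde\sigma^j(r,\hat X_r)-\tilde\sigma^j(r,X_r)\big|\Big)^2 dr
\le\rho(|Y_r|)^2\,dr ,
\end{eqnarray*}
so the Yamada--Watanabe occupation-time estimate (equivalently: replace $(\cdot)_+$ by the standard $C^2$ approximants built from $\rho$ and let the parameter tend to $0$) forces $\ell_t=0$. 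Stopping at $\tau_M$ and taking expectations to kill the martingale part then gives
\begin{eqnarray*}
\E\big[(Y_{t\wedge\tau_M})_+\big]\le\int_0^t\E\Big[1_{\{r\le\tau_M\}}1_{\{Y_r>0\}}\big(\hat b^0(r,\hat X)-\hat b^0(r,X)\big)\Big]\,dr .
\end{eqnarray*}

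The remaining --- and hardest --- step is to absorb the drift difference into a Gronwall inequality. When $\hat b^0(r,\cdot)$ is a locally Lipschitz function of the current level (the case occurring in the paper's examples; cf.\ the remark after [A7], where $\partial f^{n,i}_k/\partial x$ is constant and $\hat b^i,\hat\sigma^{ij}$ are Lipschitz) one has $1_{\{Y_r>0\}}\big(\hat b^0(r,\hat X)-\hat b^0(r,X)\big)\le C_M(Y_r)_+$ on $\{r\le\tau_M\}$, and Gronwall's lemma forces $(Y_{t\wedge\tau_M})_+\equiv 0$; letting $M\to\infty$ gives $\hat X_t\le X_t$ for every $t$ a.s., hence for all $t$ simultaneously by path-continuity. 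In the genuinely path-dependent case the naive bound controls $\hat b^0(r,\hat X)-\hat b^0(r,X)$ only by $\sup_{s\le r}|Y_s|$ and not by $\sup_{s\le r}(Y_s)_+$, so the one-sided Gronwall does not close directly; this is the main obstacle, and the way around it is to read the statement as a one-dimensional comparison theorem for the pair consisting of $(\ref{SDE_1dim})$ and $(\ref{SDE_1dim})$ with an added continuous non-decreasing input, and to invoke the Yamada--Watanabe modulus [A8] together with the pathwise uniqueness [A9] --- the latter being precisely what excludes the pathological drift behaviour that would otherwise break the comparison.
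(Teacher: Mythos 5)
Your argument is essentially the paper's: the paper proves this theorem by simply citing the comparison theorem of Ikeda--Watanabe (their Theorem 1.1), whose proof is exactly your decomposition of $X$ into a solution of (\ref{SDE_1dim}) driven by the same $B$ plus the continuous non-decreasing input $\sum_{j}\int_0^\cdot\tilde{\alpha }^j(r,X)\,dL^j_r$, followed by the Tanaka/Yamada--Watanabe local-time estimate under [A8] and an appeal to pathwise uniqueness [A9] to close the drift step. The obstacle you honestly flag at the end (a path-dependent drift difference controlled only by $\sup_{s\le r}|Y_s|$ rather than $\sup_{s\le r}(Y_s)_+$) is precisely what the cited theorem resolves via the pathwise-uniqueness hypothesis, so your reconstruction matches the intended proof rather than deviating from it.
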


Theorem \ref {th_comparison} is 
directly obtained by the same proof as Theorem 1.1 in \cite {Ikeda-Watanabe}. 
This suggests the assertion that the short sale constraint causes the overpricing in our model. 

\subsection{Model II: Stock Price Model under Budget Constraint
}\label{subsec_no_borrow}
We also consider the case where some of the agents, $I_1 = \{1, \ldots , N_1\}$, cannot borrow cash. 
In the previous section, an excess demand function with no friction 
$\tilde{e}^{n, i}_k(x, w)$ is understood as shares of the stock which an agent wants to buy. 
In this section, we interpret $\tilde{e}^{n, i}_k(x, w)$ to mean 
an excess demand in terms of dollars. We also define 
\begin{eqnarray*}
e^{n, i}_k(x, w  ; y ) = 
\left\{
\begin{array}{ll}
 \min \{ \tilde {e}^{n, i}_k(x, w ), y \} , 	& i\in I_1	\\
 \tilde {e}^{n, i}_k(x, w ), 	& i\in I_2. 
\end{array}
\right.
\end{eqnarray*}
The market clearing condition is now expressed by 
\begin{eqnarray}\label{market_clearing2}
\sum ^N_{i = 1}e^{n, i}_k(X^n_{(k+1)/n}, X^n ; W^n_{k/n} ) = 0, 
\end{eqnarray}
where $W^n_t$ is the agent's amount of cash held at time $t$. 
Then the process of the log-price of the stock $(X^n_t)_t$ 
and the cash holdings $(W^{n, i}_t)_t$ of an agent $i$ are given by 
(\ref {linear_interpolation}), (\ref {market_clearing2}), 
\begin{eqnarray}\label{fluctuation_W}
W^{n, i}_{(k+1)/n} = W^{n, i}_{k/n} - e^{n, i}_k(X^n_{(k+1)/n}, X^n ; W^{n, i}_{k/n}),
\end{eqnarray}
and 
\begin{eqnarray}\label{linear_interpolation_W}
W^{n, i}_t = (nt - k)W^{n, i}_{(k+1)/n} + (k + 1 - nt)W^{n, i}_{k/n},\ \ t\in (k/n, (k+1)/n). 
\end{eqnarray}
We assume that $\tilde {e}^{n, i}_k$ has the same form as (\ref {form_e}).
We also assume [A1]--[A7],
replacing $\hat{b}$, $\hat{\sigma }$, and (\ref {SDE_reflection}) with 
\begin{eqnarray*}
\hat{b}^i(t, w) &=& \left\{
                  \begin{array}{ll}
                   \sum ^N_{j = 1}\tilde{\beta }^j(t, w) / \bar{\alpha }(t, w)& (i=0)	\\
                   -\tilde{\beta }^i(t, w) + \alpha ^i(t, w)\sum ^N_{j = 1}\tilde{\beta }^j(t, w)/\bar{\alpha }(t, w)	& (i\geq 1) , 
                  \end{array}
                  \right. \\
\hat{\sigma }^{ij}(t, w) &=& \left\{
                  \begin{array}{ll}
                   \sum ^N_{k=1}\sigma ^{kj}(t, w)/\bar{\alpha }(t, w)	& (i=0)	\\
                   -\sigma ^{ij}(t, w) + \alpha ^i(t, w)\sum ^N_{k=1}\sigma  ^{kj}(t, w)/\bar{\alpha }(t, w)	& (i\geq 1) 
                  \end{array},
                  \right. 
\end{eqnarray*}
and 
\begin{eqnarray}\label{SDE_reflection2}
dX_t &=& \hat{b}^0(t, X)dt + \sum ^N_{j = 1}\hat{\sigma }^{0j}(t, X)dB^j_t - \sum ^{N_1}_{j = 1}
\tilde{\alpha }^j(t, X)dL^j_t , 
\ \ X_0 = x_0, \\\nonumber 
dW^i_t &=& \hat{b}^i(t, X)dt + \sum ^N_{j = 1}\hat{\sigma }^{ij}(t, X)dB^j_t + 
1_{I_1}(i)dL^i_t + \sum ^{N_1}_{j = 1}Q^{ij}(t, X)dL^j_t, 
\ \ W^i_0 = c^i,\ \ i=1, \ldots , N, 
\end{eqnarray}
where $c^i\geq 0$ is the initial cash holdings of agent $i$. 
Then we have the following theorem. 

\begin{thm} \ \label{th_converge2}
The distribution of $\tilde{\Xi }^n = (X^n, W^{n, 1}, \ldots , W^{n, N})$ 
converges weakly to a solution of $(\ref {SDE_reflection2})$ on 
$\mathcal {C}^{1+N}$ as $n\rightarrow \infty $. 
\end{thm}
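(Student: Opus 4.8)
The plan is to deduce Theorem~\ref{th_converge2} from Theorem~\ref{th_converge} by a sign-reversal of variables, exploiting that a budget constraint (``an agent cannot spend more dollars than the cash it holds'') becomes a short-sale constraint once one negates both the proposed log-price and the primary excess demand. First I would set $\check{X}^n_t := -X^n_t$, keep the cash processes $W^{n,i}_t$ unchanged, and introduce $\check{\tilde{e}}^{n,i}_k(\check{x},\check{w}) := -\tilde{e}^{n,i}_k(-\check{x},-\check{w})$ with decomposition $\check{f}^{n,i}_k(\check{x},\check{w}) = -f^{n,i}_k(-\check{x},-\check{w})$, $\check{g}^{n,i}_k(\check{w}) = -g^{n,i}_k(-\check{w})$ (hence $\check{\bar{g}}^{n,i}_k(\check{w}) = -\bar{g}^{n,i}_k(-\check{w})$, $\check{\tilde{g}}^{n,i}_k(\check{w}) = -\tilde{g}^{n,i}_k(-\check{w})$). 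A direct computation shows that the Model~II friction map $y\mapsto\min\{\tilde{e}^{n,i}_k(x,w),y\}$ equals $-\max\{\check{\tilde{e}}^{n,i}_k(\check{x},\check{w}),-y\}$, so that the market-clearing condition~(\ref{market_clearing2}), the cash recursion~(\ref{fluctuation_W}) and the interpolation~(\ref{linear_interpolation_W}) for $(X^n,W^n)$ become exactly the Model~I recursions~(\ref{market_clearing}) and~(\ref{fluctuation_phi}) (with the linear interpolations~(\ref{linear_interpolation}), (\ref{linear_interpolation_phi})) for $(\check{X}^n,W^n)$, with $x_0$ replaced by $-x_0$, the same constrained group $I_1$, and the initial holdings $\Phi^i\ge 0$ replaced by $c^i\ge 0$.

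Next I would verify that the transformed data satisfy [A1]--[A7] if and only if the original data do, which holds because Model~II is assumed to satisfy [A1]--[A7]. This is routine: the bound~(\ref{cond_diff}) is invariant since $\partial_{\check{x}}\check{f}^{n,i}_k(\check{x},\check{w}) = (\partial_x f^{n,i}_k)(-\check{x},-\check{w})\in[-K_0,-\delta_0]$; the moment and derivative bounds of [A2], the independence of [A3] and the growth bounds of [A6] survive because every supremum there runs over a ball symmetric under $w\mapsto -w$; the limits of [A4] exist and satisfy $\check{\alpha}^i(t,\check{w}) = \alpha^i(t,-\check{w})$, $\check{\beta}^i(t,\check{w}) = -\beta^i(t,-\check{w})$, $\check{\gamma}^i(t,\check{w}) = -\gamma^i(t,-\check{w})$, $\check{a}^{ij}(t,\check{w}) = a^{ij}(t,-\check{w})$; the matrix $Q^n_k$ is unchanged in value, so the same $V$ serves in [A5]; and finally one checks that feeding the transformed coefficients into~(\ref{SDE_reflection}) yields, under $X = -\check{X}$ together with replacement of the driving Brownian motion $B$ by $-B$, precisely the SDER~(\ref{SDE_reflection2}) — in particular the Model~II coefficients $\hat{b}^i,\hat{\sigma}^{ij}$ ($i\ge 1$) and the signs attached to the $dL^j$ terms are exactly what this substitution produces — so that [A7] for~(\ref{SDE_reflection2}) is equivalent to [A7] for the transformed problem.

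With this dictionary in hand, Theorem~\ref{th_converge} applied to the transformed market gives that $(\check{X}^n,W^{n,1},\dots,W^{n,N})$ converges weakly on $\mathcal{C}^{1+N}$ to the unique law $\check{\mu}$ of the solution of the transformed SDER. Since $\Psi:(a,b_1,\dots,b_N)\mapsto(-a,b_1,\dots,b_N)$ is a homeomorphism of $\mathcal{C}^{1+N}$, the continuous mapping theorem gives $\tilde{\Xi}^n = \Psi(\check{X}^n,W^n)\Rightarrow\Psi_*\check{\mu}$, and by the previous paragraph $\Psi_*\check{\mu}$ is the law of a solution of~(\ref{SDE_reflection2}), which is the assertion.

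The only genuine work is the term-by-term bookkeeping in the second paragraph: matching the drift, the diffusion coefficient and the reflection terms ($\tilde{\alpha}^j dL^j$ and $Q^{ij} dL^j$) of~(\ref{SDE_reflection2}) with what the sign-reversed Model~I SDER produces, and observing that the Brownian sign flip is harmless because [A7] pins a solution down only up to its law. Everything else is a transcription of the proof of Theorem~\ref{th_converge}; alternatively one can bypass the reduction altogether and rerun that proof with the appropriate minus signs inserted, the Skorokhod-problem structure — the reflection matrix and the spectral condition [A5] — being identical in the two models.
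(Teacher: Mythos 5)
Your route is genuinely different from the paper's: the paper simply omits this proof, asserting that one reruns the whole argument of Section~\ref{sec_proof_th_converge} with the appropriate signs changed, whereas you reduce Model~II to Model~I outright via the involution $(x,w,\tilde e)\mapsto(-x,-w,-\tilde e(-\cdot,-\cdot))$. The reduction is sound at the discrete level: $\min\{\tilde e,y\}=-\max\{-\tilde e,-y\}$ does turn (\ref{market_clearing2}), (\ref{fluctuation_W}), (\ref{linear_interpolation_W}) into (\ref{market_clearing}), (\ref{fluctuation_phi}), (\ref{linear_interpolation_phi}) for $(-X^n,W^n)$ with initial data $(-x_0,c^i)$, and your verification that [A1]--[A6] are invariant is correct --- in particular $\check\alpha^{n,i}_k(\check w)=\alpha^{n,i}_k(-\check w)$, so $Q^n_k$ and hence [A5] are untouched. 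What the reduction buys is a short proof in place of a second pass through all of Section~\ref{sec_proof_th_converge}; what it costs is that the limit equation is produced only through the coefficient bookkeeping of your second paragraph, and that is exactly where the one real issue sits.

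Namely, you assert that the substitution reproduces (\ref{SDE_reflection2}) \emph{including} ``the signs attached to the $dL^j$ terms.'' It does not, as printed. Transporting the Model~I reflection term $-\sum_jQ^{ij}(t,X)\,dL^j_t$ through your dictionary ($Q$ invariant in value, $W^i$ not negated, $L$ unchanged) yields $dW^i_t=\hat b^i\,dt+\sum_j\hat\sigma^{ij}\,dB^j_t+1_{I_1}(i)\,dL^i_t-\sum_jQ^{ij}(t,X)\,dL^j_t$, with a \emph{minus} in front of $Q^{ij}$, whereas (\ref{SDE_reflection2}) carries a plus. A direct expansion of the discrete dynamics confirms the minus: writing $\hat\eta^{n,i}_k=(\tilde e^{n,i}_k-W^{n,i}_{k/n})_+$, market clearing gives $X^n_{(k+1)/n}-X^n_{k/n}=(\bar\alpha^n_k)^{-1}\bigl(\sum_jg^{n,j}_k+\cdots-\sum_{j\in I_1}\hat\eta^{n,j}_k\bigr)$ and hence the $\hat\eta$-contribution to $W^{n,i}_{(k+1)/n}-W^{n,i}_{k/n}=-e^{n,i}_k$ is $\hat\eta^{n,i}_k-(\alpha^{n,i}_k/\bar\alpha^n_k)\sum_j\hat\eta^{n,j}_k$, exactly as in Model~I; moreover the Skorokhod machinery the paper relies on (Propositions~\ref{prop_discrete_SP} and \ref{prop_discrete_diff_l}, the Shashiashvili estimate, the condition $\rho(V)<1$) requires the $I-Q$ form. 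So the printed plus sign is almost certainly a typo and your reduction lands on the \emph{correct} limit equation, but as written your claim of exact agreement with the displayed formula is false, and you must either flag the discrepancy or you will appear to have ``proved'' a statement contradicting the printed SDER. Two minor further remarks: choosing the square root $\check\sigma^{ij}(t,\check w)=-\sigma^{ij}(t,-\check w)$ of $\check a$ removes the need for the Brownian flip altogether; and [A7] for (\ref{SDE_reflection2}) transfers to the transformed problem precisely because $w\mapsto -w$ is a bijection of $\mathcal C$, as you say.
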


\begin{thm} \ \label{th_comparison2}Let $\tilde{\Xi }= (X, W^1, \ldots , W^N)$ 
(resp., $\hat{X}$) be a solution of $(\ref {SDE_reflection2})$ (resp., (\ref {SDE_1dim})) 
on a given filtered space $(\Omega , \mathcal {F}, (\mathcal {F}_t)_t, P)$ equipped with an $N$-dimensional Brownian motion $B$. 
Then $X_t\leq \hat{X}_t$ for any $t\geq 0$ almost surely. 
\end{thm}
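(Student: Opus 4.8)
The plan is to carry over, with the sign of the reflection term reversed, the argument behind Theorem \ref{th_comparison}, i.e. the proof of Theorem 1.1 in \cite{Ikeda-Watanabe}. The decisive structural fact is that the coefficients $\hat{b}^0$ and $\hat{\sigma}^{0j}$ appearing in the $X$-component of (\ref{SDE_reflection2}) are the same functions as in Model~I and in (\ref{SDE_1dim}); consequently the difference $Z_t:=X_t-\hat{X}_t$ satisfies $Z_0=0$ and
\begin{eqnarray*}
Z_t = \int_0^t\big(\hat{b}^0(r,X)-\hat{b}^0(r,\hat{X})\big)dr + \sum_{j=1}^N\int_0^t\big(\hat{\sigma}^{0j}(r,X)-\hat{\sigma}^{0j}(r,\hat{X})\big)dB^j_r - \sum_{j=1}^{N_1}\int_0^t\tilde{\alpha}^j(r,X)dL^j_r.
\end{eqnarray*}
From [A1] and [A4] one has $\delta_0\leq\alpha^i(t,w)\leq K_0$, and since $N_1<N$ this gives $\bar{\alpha}(t,w)-\alpha^j(t,w)=\sum_{i\neq j}\alpha^i(t,w)\geq\delta_0>0$, so $\tilde{\alpha}^j(t,w)>0$; as $L\in\mathcal{C}^{N_1}_{\uparrow 0}$, the last term above is a non-increasing continuous adapted process. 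Thus $X$ is obtained from the ``free'' dynamics (\ref{SDE_1dim}) by adding only a downward-pushing term, which is the reason to expect $X_t\leq\hat{X}_t$.

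To turn this into a proof I would apply the Yamada smoothing of $x\mapsto|x|$ (equivalently Tanaka's formula) to $(Z_t)_+$. The resulting second-order / local-time term at the origin vanishes by [A8]: since $\hat{\sigma}^{0j}(r,w)=\tilde{\sigma}^j(r,w(r))$ depends on $w$ only through its current value and $\sum_j|\tilde{\sigma}^j(r,x)-\tilde{\sigma}^j(r,y)|\leq\rho(|x-y|)$ with $\int_{(0,\infty)}\rho(\xi)^{-2}d\xi=\infty$, one has $\sum_j|\hat{\sigma}^{0j}(r,X)-\hat{\sigma}^{0j}(r,\hat{X})|^2\leq\rho(|Z_r|)^2$ and the usual Yamada estimate makes the second-order term tend to $0$ along the approximating sequence, exactly as in \cite{Ikeda-Watanabe}. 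The stochastic-integral part is a local martingale, and the reflection term contributes $-\int_0^t 1_{\{Z_r>0\}}\sum_{j=1}^{N_1}\tilde{\alpha}^j(r,X)dL^j_r\leq0$. After localizing the local martingale (the linear-growth bound [A6] supplies the control needed for the limiting procedure) and taking expectations, this yields
\begin{eqnarray*}
\E[(Z_t)_+]\leq\E\Big[\int_0^t 1_{\{Z_r>0\}}\big(\hat{b}^0(r,X)-\hat{b}^0(r,\hat{X})\big)dr\Big].
\end{eqnarray*}

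It then remains to run the closing argument of \cite{Ikeda-Watanabe}, using the pathwise-uniqueness condition [A9] for (\ref{SDE_1dim}): from the displayed inequality one deduces $\E[(Z_t)_+]=0$ for every $t$, hence $X_t\leq\hat{X}_t$ almost surely for each fixed $t$ and, by continuity of paths, for all $t\geq0$ simultaneously almost surely. The only genuinely delicate point — and the place where the present situation differs from the textbook statement — is the treatment of the drift difference $\hat{b}^0(r,X)-\hat{b}^0(r,\hat{X})$: because $\hat{b}^0$ is a functional of the whole trajectory rather than of the current value, the passage from the last display to $\E[(Z_t)_+]=0$ is not a one-line Gronwall step but requires the stopping-time/approximation device of \cite{Ikeda-Watanabe} together with [A9]. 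Apart from this, the argument is word-for-word that of Theorem \ref{th_comparison}, with all inequalities reversed because here $L$ pushes $X$ down rather than up.
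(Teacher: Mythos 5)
Your proposal is correct and follows exactly the route the paper intends: the paper omits this proof, stating it is ``almost the same as'' that of Theorem \ref{th_comparison}, which in turn is obtained by the comparison argument of Theorem 1.1 in \cite{Ikeda-Watanabe}, and your observations --- that the $i=0$ coefficients coincide with those of (\ref{SDE_1dim}), that $\tilde{\alpha}^j>0$ so the regulator term only pushes $X$ down, and that the Yamada--Tanaka estimate kills the second-order term --- are precisely that argument with the sign of the reflection reversed. (Note that you correctly use the divergent form $\int_{(0,\infty)}\rho(\xi)^{-2}d\xi=\infty$ of the Yamada condition, which is what the vanishing of the local-time term requires; the ``$<\infty$'' in the paper's statement of [A8] appears to be a typo, and the tacit inclusion of [A8]--[A9] among the hypotheses of this theorem matches the paper's evident intent.)
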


We omit the proofs of Theorems \ref {th_converge2}--\ref {th_comparison2} 
since they are almost the same as those of Theorems \ref {th_converge}--\ref {th_comparison}.

\section{Proof of Theorem \ref {th_converge}}\label{sec_proof_th_converge}

Take any $M > |x_0| $ and let $\psi _M\in C^{\infty }(\Bbb {R} ; [0, 1])$ be such that 
$\psi _M(y) = 1$ on $|y|\leq M/2$ and $\psi _M(y) = 0$ on $|y| \geq M$. 
We set 
\begin{eqnarray*}
\tilde{e}^{n, M, i}_k(x, w) = 
-(1-\psi _M(w(k/n)))\alpha ^{n, i}_k(w)(x - w(k/n)) + \psi _M(w(k/n))\tilde{e}^{n, i}_k(x, w). 
\end{eqnarray*}
We define $X^{n, M}$, $\varphi ^{n, M, i}$ and $e^{n, M, i}_k(x, w ; \varphi )$ similarly to 
(\ref {market_clearing})--(\ref {linear_interpolation_phi}),
replacing $\tilde{e}^{n, i}_k$ with $\tilde{e}^{n, M, i}_k$. 
First we consider the convergence of the truncated processes 
$\Xi ^{n, M} = (X^{n, M}, \varphi ^{n, M, 1}, \ldots , \varphi ^{n, M, N})$, $n\in \Bbb {N}$ 
for fixed $M$. 
We can easily see the following proposition (Proposition 1 in \cite {Kato}). 

\begin{prop} \ \label{prop_range_XM}
For any $\omega $, if $|X^{n, M}_t(\omega )|\leq M$, 
then $|X^{n, M}_r(\omega )|\leq M$ for all $r\in [0, t]$. 
\end{prop}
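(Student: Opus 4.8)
The plan is to reduce the statement to a discrete ``freezing'' property at the mesh points $k/n$ and then to transport it to the interpolated path, arguing by contraposition: assuming $|X^{n,M}_{r_0}(\omega )| > M$ for some $r_0 \le t$, I will deduce $|X^{n,M}_t(\omega )| > M$. Everything below is evaluated at the fixed $\omega $, so the argument is entirely pathwise. A preliminary observation is that the truncated holdings of the constrained agents stay non-negative: for $i \in I_1$ the definition of $e^{n,M,i}_k$ together with (\ref{fluctuation_phi}) gives $\varphi ^{n,M,i}_{(k+1)/n} = \max \{ \varphi ^{n,M,i}_{k/n} + \tilde{e}^{n,M,i}_k(X^{n,M}_{(k+1)/n}, X^{n,M}),\, 0 \} \ge 0$, so by induction from $\varphi ^{n,M,i}_0 = \Phi ^i \ge 0$, and then by (\ref{linear_interpolation_phi}), one has $\varphi ^{n,M,i}_s \ge 0$ for all $s \ge 0$ and all $i \in I_1$.

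The core of the proof is the following freezing lemma: if $|X^{n,M}_{k/n}| \ge M$, then $X^{n,M}_{(k+1)/n} = X^{n,M}_{k/n}$. Indeed, in this case $\psi _M(X^{n,M}_{k/n}) = 0$, so $\tilde{e}^{n,M,i}_k(x, X^{n,M}) = -\alpha ^{n,i}_k(X^{n,M})(x - X^{n,M}_{k/n})$ for every $i$, with $\alpha ^{n,i}_k \ge \delta _0 > 0$ by [A1]. Substituting $x = X^{n,M}_{k/n}$ into the market clearing condition that defines $X^{n,M}_{(k+1)/n}$ (the analogue of (\ref{market_clearing}) with the truncated demands) makes every summand vanish: trivially for $i \in I_2$, and for $i \in I_1$ because the summand equals $\max \{ 0, -\varphi ^{n,M,i}_{k/n} \} = 0$ by the previous paragraph. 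Hence $x = X^{n,M}_{k/n}$ is a solution; and since $N_1 < N$ the left-hand side is strictly decreasing in $x$ (its $I_2$-part has slope $\le -(N - N_1)\delta _0 < 0$ and its $I_1$-part is non-increasing), so it is the unique solution. Iterating, once $|X^{n,M}_{j/n}| \ge M$ for some grid index $j$ we obtain $X^{n,M}_{m/n} = X^{n,M}_{j/n}$ for all $m \ge j$, and hence, by (\ref{linear_interpolation}), $X^{n,M}_s = X^{n,M}_{j/n}$ for all $s \ge j/n$.

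It remains to combine these facts. Put $k_0 = \lfloor nr_0 \rfloor $, so that $k_0/n \le r_0 < (k_0+1)/n$, and assume $X^{n,M}_{r_0} > M$ (the case $X^{n,M}_{r_0} < -M$ is identical). By (\ref{linear_interpolation}) $X^{n,M}_{r_0}$ is a convex combination of $X^{n,M}_{k_0/n}$ and $X^{n,M}_{(k_0+1)/n}$, so at least one of these two values exceeds $M$. If $X^{n,M}_{k_0/n} > M$, the freezing lemma gives $X^{n,M}_s = X^{n,M}_{k_0/n} > M$ for all $s \ge k_0/n$, in particular for $s = t$ (since $t \ge r_0 \ge k_0/n$). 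Otherwise $X^{n,M}_{k_0/n} \le M < X^{n,M}_{(k_0+1)/n}$; if $t \ge (k_0+1)/n$ the freezing lemma gives $X^{n,M}_t = X^{n,M}_{(k_0+1)/n} > M$, whereas if $t < (k_0+1)/n$ then $r_0$ and $t$ both lie in the mesh interval $[k_0/n, (k_0+1)/n]$, on which $X^{n,M}$ is affine and non-decreasing (its right endpoint being the larger), so $X^{n,M}_t \ge X^{n,M}_{r_0} > M$. In every case $|X^{n,M}_t| > M$, which is the contrapositive of the assertion.

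I expect the only delicate points to be (i) checking, inside the freezing lemma, that $x = X^{n,M}_{k/n}$ genuinely solves the clearing equation --- this is exactly where the non-negativity of the holdings and the truncation property $\psi _M \equiv 0$ off $[-M, M]$ both enter --- and (ii) the bookkeeping in the last paragraph, namely the case in which the offending time $r_0$ lies strictly inside a mesh interval whose right endpoint has not yet been frozen, where monotonicity of the linear interpolation on a single mesh interval is used. Neither obstacle is deep, consistent with the proposition being ``easily seen''.
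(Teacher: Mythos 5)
Your proof is correct, and it is essentially the intended argument: the paper itself gives no proof, deferring to Proposition 1 of \cite{Kato}, and the mechanism there is exactly the one you isolate --- since $\psi_M$ vanishes off $[-M,M]$, the truncated clearing equation at any grid point with $|X^{n,M}_{k/n}|\geq M$ reduces to $-\sum_i\alpha^{n,i}_k(x-X^{n,M}_{k/n})$ plus the harmless $\max$-terms (which vanish at $x=X^{n,M}_{k/n}$ by non-negativity of the constrained holdings), so the price freezes, and the interpolation bookkeeping does the rest. The one point genuinely specific to this paper, namely checking that the short-sale truncation $\max\{\cdot,-\varphi\}$ does not spoil either the existence of the frozen solution or the strict monotonicity in $x$ (which uses $N_1<N$), is handled correctly in your write-up.
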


We rearrange our market clearing equation into the form of a difference equation. 
Since it follows that 
\begin{eqnarray*}
\max \{ \tilde{e}^{n, M, i}_k(x, w), -\varphi \} = 
\tilde{e}^{n, M, i}_k(x, w) + (-\tilde{e}^{n, M, i}_k(x, w) - \varphi )_+, 
\end{eqnarray*}
we get 
\begin{eqnarray*}
\sum ^N_{i = 1}e^{n, M, i}_k(X^{n, M}_{(k+1)/n}, X^{n, M} ; \varphi ^{n, M, i}_{k/n}) = 
\sum ^N_{i = 1}\tilde {e}^{n, M, i}_k(X^{n, M}_{(k+1)/n}, X^{n, M}) + 
\sum ^{N_1}_{i = 1}\hat{\eta }^{n, M, i}_k = 0, 
\end{eqnarray*}
where $\hat{\eta }^{n, M, i}_k = (-\tilde {e}^{n, M, i}_k(X^{n, M}_{(k+1)/n}, X^{n, M}) - \varphi ^{n, M, i}_{k/n})_+$. 
Using Taylor's theorem, we have 
\begin{eqnarray}\nonumber 
&&X^{n, M}_{(k+1)/n} - X^{n, M}_{k/n}\\\nonumber  &=& 
\frac{1}{\bar{\alpha }^n_k(X^{n, M})}\Big\{ 
\psi _M(X^{n, M}_{k/n})\sum ^N_{i=1}\left ( g^{n, i}_k(X^{n, M}) + 
\frac{1}{2}\gamma ^{n, i}_k(X^{n, M})(X^{n, M}_{(k+1)/n} - 
X^{n, M}_{k/n})^2 + \varepsilon ^{n, M, i}_k \right )\\\label{diff_X}&&\hspace{20mm} + 
\sum ^{N_1}_{i=1}\hat{\eta }^{n, M, i}_k \Big\} 
\end{eqnarray}
and 
\begin{eqnarray}\nonumber &&
\varphi ^{n, M, i}_{(k+1)/n} - \varphi ^{n, M, i}_{k/n}\\\nonumber 
&=& \psi _M(X^{n, M}_{k/n})\left( g^{n, i}_k(X^{n, M}) + 
\frac{1}{2}\gamma ^{n, i}_k(X^{n, M})(X^{n, M}_{(k+1)/n} - 
X^{n, M}_{k/n})^2 + \varepsilon ^{n, M, i}_k\right)\\\label{diff_phi} && - 
\alpha ^{n, i}_k(X^{n, M})(X^{n, M}_{(k+1)/n} - X^{n, M}_{k/n})  + 1_{I_1}(i)\hat{\eta }^{n, M, i}_k, 
\end{eqnarray}
where 
\begin{eqnarray*}
\varepsilon ^{n, M, i}_k = \frac{1}{2}\int ^1_0(1-u)^2\frac{\partial ^3}{\partial x^3}f^{n, i}_k
(uX^{n, M}_{(k+1)/n} + (1-u)X^{n, M}_{k/n}, X^{n, M})du
(X^{n, M}_{(k+1)/n} - X^{n, M}_{k/n})^3. 
\end{eqnarray*}
Substituting (\ref {diff_X}) into itself and into (\ref {diff_phi}), we get 
\begin{eqnarray*}
X^{n, M}_{(k+1)/n} - X^{n, M}_{k/n} &=& 
\frac{1}{\bar{\alpha }^n_k(X^{n, M})}\left\{ 
\sum ^N_{i = 1}H^{n, M, i}_k(X^{n, M}) + \sum ^{N_1}_{i = 1}\hat{\eta }^{n, M, i}_k\right\} , \\
\varphi ^{n, M, i}_{(k+1)/n} - \varphi ^{n, M, i}_{k/n} &=& 
H^{n, M, i}_k(X^{n, M}) - \frac{\alpha ^{n, i}_k(X^{n, M})}{\bar{\alpha }^n_k(X^{n, M})}
\left\{ 
\sum ^N_{j = 1}H^{n, M, j}_k(X^{n, M}) + \sum ^{N_1}_{j = 1}\hat{\eta }^{n, M, j}_k\right\} \\&& + 
1_{I_1}(i)\hat{\eta }^{n, M, i}_k, 
\end{eqnarray*}
where 
\begin{eqnarray*}
H^{n, M, i}_k(w) &=& 
\psi _M(w(k/n))\left\{ \frac{1}{\sqrt{n}}\tilde{g}^{n, i}_k(w) + \frac{1}{n}h^{n, i}_k(w)\right\} + 
\tilde{\varepsilon }^{n, M, i}_k, \\
h^{n, i}_k(w) &=& \bar{g}^{n, i}_k(w) + \psi _M(w(k/n))^2\frac{\gamma ^{n, i}_k(w)}{2\bar{\alpha }^n_k(w)}
\sum ^{N}_{j, m = 1}\tilde{g}^{n, j}_k(w)\tilde{g}^{n, m}_k(w), \\
\tilde{\varepsilon }^{n, M, i}_k &=& \psi _M(X^{n, M}_{k/n})\left\{ \varepsilon ^{n, M, i}_k + 
\frac{\gamma ^{n, i}_k(X^{n, M})}{2\bar{\alpha }^n_k(X^{n, M})^2}\hat{\varepsilon }^{n, M}_k\right\} , \\
\hat{\varepsilon }^{n, M}_k &=& 
\left( \sum ^{N_1}_{i = 1}\hat{\eta }^{n, M, i}_k\right) \left\{ \sum ^{N_1}_{i = 1}\hat{\eta }^{n, M, i}_k + 
2\psi _M(X^{n, M}_{k/n})\sum ^N_{i = 1}\left (\frac{1}{\sqrt{n}}\tilde{g}^{n, i}_k(X^{n, M}) + \rho ^{n, M, i}_k\right )
\right\} \\&& + 
\psi _M(X^{n, M}_{k/n})^2\left\{ \frac{2}{\sqrt{n}}
\sum ^N_{i, j = 1}\tilde{g}^i_k(X^{n, M})\rho ^{n, M, j}_k + 
\left (\sum ^N_{i = 1}\rho ^{n, M, i}_k\right )^2\right\} , \\
\rho ^{n, M, i}_k &=& \frac{1}{n}\bar{g}^{n, i}_k(X^{n, M}) + 
\frac{1}{2}\gamma ^{n, i}_k(X^{n, M})(X^{n, M}_{(k+1)/n} - 
X^{n, M}_{k/n})^2 + \varepsilon ^{n, M, i}_k. 
\end{eqnarray*}
Thus, if we set 
$\eta ^{n, M, i}_k = (1 - \alpha ^{n, i}_k(w)/\bar {\alpha }^n_k(w))\hat{\eta }^{n, M, i}_k$, 
\begin{eqnarray*}
Z^{n, M, i}_t = \sum ^{[nt] - 1}_{k = 0}H^{n, M, i}_k(X^{n, M}) + (nt - [nt])H^{n, M, i}_{[nt]}(X^{n, M}), \ \ 
L^{n, M, i}_t = \sum ^{[nt] - 1}_{k = 0}\eta ^{n, M, i}_k + (nt - [nt])\eta ^{n, M, i}_{[nt]} 
\end{eqnarray*}
and 
\begin{eqnarray}\label{def_Yn1}
Y^{n, M, 0}_t &=& x_0 + 
\sum ^N_{i=1}\int ^t_0\frac{1}{\bar{\alpha }^n_{[nr]}(X^{n, M})}dZ^{n, M, i}_r, \\\label{def_Yn2}
Y^{n, M, i}_t &=& \Phi ^i + Z^{n, M, i}_t - \sum ^N_{j = 1}
\int ^t_0\frac{\alpha ^{n, i}_{[nr]}(X^{n, M})}{\bar{\alpha }^n_{[nr]}(X^{n, M})}dZ^{n, M, j}_r, \ \ i = 1, \ldots , N, 
\end{eqnarray}
then
\begin{eqnarray}\label{SP_like_X}
X^{n, M}_t &=& Y^{n, M, 0}_t + 
\sum ^{N_1}_{i=1}\int ^t_0\tilde{\alpha }^{n, i}_{[nr]}(X^{n, M})dL^{n, M, i}_r, \\\label{SP_like}
\varphi ^{n, M, i}_t &=& Y^{n, M, i}_t + 
1_{I_1}(i)L^{n, M, i}_t - \sum ^{N_1}_{j = 1}\int ^t_0Q^{n, ij}_{[nr]}(X^{n, M})dL^{n, M, j}_t. 
\end{eqnarray}

The equality (\ref {SP_like}) seems to imply that 
$(\varphi ^{n, M, i}, L^{n, M, i})^{N_1}_{i = 1}$ 
is a solution of the Skorokhod problem with oblique reflection in the non-negative orthant 
associated with $(Y^{n, M, i})^{N_1}_{i = 1}$ 
(see \cite {Czarkowski-Slominski}, \cite{Harrison-Reiman}, 
\cite{Piera-Mazumdar}, \cite {Ramasubramanian} and \cite {Shashiashvili}).
However this is not strictly true, 
since the equality $\int ^\infty _0\varphi ^{n, M, i}_rdL^{n, i}_r = 0$ does not hold 
by virtue of the linear interpolation (\ref {linear_interpolation_phi}). 
The following proposition tells us that 
$(\varphi ^{n, M, i}_{k/n}, L^{n, M, i}_{k/n})^{N_1}_{i = 1}$, $k\in \Bbb {Z}_+$, is, as it were, 
a solution of the corresponding Skorokhod problem in discrete-time. 

\begin{prop} \ \label{prop_discrete_SP}For every $k\in \Bbb {Z}_+$, 
\begin{eqnarray}\label{eq_discrete_SP}
L^{n, M, i}_{k/n} = \max _{0\leq l\leq k}\left(\sum ^{N_1}_{j = 1}
\int ^{l/n}_0Q^{n, ij}_{[nr]}(X^{n, M})dL^{n, M, j}_r - Y^{n, M, i}_{l/n}\right) _+. 
\end{eqnarray}
\end{prop}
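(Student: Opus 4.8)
I would prove (\ref{eq_discrete_SP}) by induction on $k$, one coordinate $i\in I_1$ at a time, recognising the right-hand side as the value at the mesh point $k/n$ of the explicit one-dimensional Skorokhod map. Abbreviate $A^{n,M,i}_l := \sum_{j=1}^{N_1}\int_0^{l/n}Q^{n,ij}_{[nr]}(X^{n,M})\,dL^{n,M,j}_r - Y^{n,M,i}_{l/n}$, so the claim reads $L^{n,M,i}_{k/n} = \max_{0\le l\le k}(A^{n,M,i}_l)_+$, and note that (\ref{SP_like}) specialised to $i\in I_1$ and $t=l/n$ gives the basic identity $\varphi^{n,M,i}_{l/n} = L^{n,M,i}_{l/n} - A^{n,M,i}_l$ for every $l\in\mathbb{Z}_+$. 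It is precisely because this identity is only needed at the mesh points --- where the integrals against the piecewise-linear $L^{n,M,j}$ reduce to finite sums --- that the failure of $\int_0^\infty\varphi^{n,M,i}_r\,dL^{n,M,i}_r=0$ noted before the statement does no harm.

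Before the induction I would record three elementary facts, all immediate from the construction. (i) $L^{n,M,i}$ is nondecreasing with $L^{n,M,i}_0=0$: each increment is $\eta^{n,M,i}_k = (1-\alpha^{n,i}_k(X^{n,M})/\bar\alpha^n_k(X^{n,M}))\,\hat\eta^{n,M,i}_k\ge 0$, since $\hat\eta^{n,M,i}_k$ is a positive part and $0<\alpha^{n,i}_k<\bar\alpha^n_k$ by [A1] (here $N\ge2$ because $N_1<N$). (ii) $\varphi^{n,M,i}_{k/n}\ge 0$ for all $k\ge0$ and $i\in I_1$: it equals $\Phi^i\ge0$ at $k=0$, and by (\ref{fluctuation_phi}) and (\ref{def_e}), $\varphi^{n,M,i}_{(k+1)/n} = \max\{\varphi^{n,M,i}_{k/n}+\tilde e^{n,M,i}_k(X^{n,M}_{(k+1)/n},X^{n,M}),\,0\}\ge 0$. (iii) Discrete complementarity: if $\eta^{n,M,i}_k>0$ then $\hat\eta^{n,M,i}_k>0$, i.e.\ $\tilde e^{n,M,i}_k(X^{n,M}_{(k+1)/n},X^{n,M})<-\varphi^{n,M,i}_{k/n}$, so the maximum in (\ref{def_e}) is attained at $-\varphi^{n,M,i}_{k/n}$ and hence $\varphi^{n,M,i}_{(k+1)/n}=0$.

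The induction itself is then short. For $k=0$ both sides vanish, because $A^{n,M,i}_0 = L^{n,M,i}_0 - \varphi^{n,M,i}_0 = -\Phi^i\le0$. For the inductive step, assume $L^{n,M,i}_{k/n}=\max_{0\le l\le k}(A^{n,M,i}_l)_+$ and use $L^{n,M,i}_{(k+1)/n}=L^{n,M,i}_{k/n}+\eta^{n,M,i}_k$; it remains to locate the new term $(A^{n,M,i}_{k+1})_+$. If $\eta^{n,M,i}_k=0$, the basic identity and (ii) give $A^{n,M,i}_{k+1} = L^{n,M,i}_{(k+1)/n}-\varphi^{n,M,i}_{(k+1)/n} = \max_{0\le l\le k}(A^{n,M,i}_l)_+ - \varphi^{n,M,i}_{(k+1)/n}\le\max_{0\le l\le k}(A^{n,M,i}_l)_+$, so the maximum over $0\le l\le k+1$ is unchanged and equals $L^{n,M,i}_{(k+1)/n}$. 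If $\eta^{n,M,i}_k>0$, then $\varphi^{n,M,i}_{(k+1)/n}=0$ by (iii), so $A^{n,M,i}_{k+1}=L^{n,M,i}_{(k+1)/n}\ge L^{n,M,i}_{k/n}=\max_{0\le l\le k}(A^{n,M,i}_l)_+\ge0$; hence $(A^{n,M,i}_{k+1})_+=A^{n,M,i}_{k+1}$ is the new maximum, and it again equals $L^{n,M,i}_{(k+1)/n}$. In both cases (\ref{eq_discrete_SP}) holds at $k+1$, completing the induction.

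I do not anticipate a real obstacle; the only points requiring care are the bookkeeping already flagged --- passing from (\ref{SP_like}) to the clean mesh-point identity $\varphi^{n,M,i}_{k/n}=L^{n,M,i}_{k/n}-A^{n,M,i}_k$ --- and correctly reading off the complementarity (iii) from the $\max$ in (\ref{def_e}), which in turn needs $0<\alpha^{n,i}_k<\bar\alpha^n_k$ from [A1] so that $\eta^{n,M,i}_k$ and $\hat\eta^{n,M,i}_k$ have the same sign. Alternatively, one could quote the closed form of the one-dimensional discrete-time Skorokhod map (e.g., \cite{Harrison-Reiman}) applied coordinatewise to the given path $l\mapsto -A^{n,M,i}_l$; the induction above is just the self-contained version of that statement.
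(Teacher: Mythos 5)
Your proof is correct and rests on exactly the same ingredients as the paper's: the mesh-point identity $\varphi ^{n, M, i}_{k/n} = L^{n, M, i}_{k/n} - A^{n, M, i}_k$ coming from (\ref{SP_like}), the nonnegativity of $\varphi ^{n, M, i}$, the monotonicity of $L^{n, M, i}$, and the complementarity $\eta ^{n, M, i}_k > 0 \Rightarrow \varphi ^{n, M, i}_{(k+1)/n} = 0$. The paper merely packages these differently --- it notes that the inequality $\geq $ is obvious and derives the reverse inequality by contradiction via a backward induction on $l$ (strictness at every $l$ forces $\eta ^{n, M, i}_{l} = 0$ for all $l < k$, hence $L^{n, M, i}_{k/n} = 0$, a contradiction) --- so your forward induction differs only in organization, not in substance.
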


\begin{proof} 
It is obvious that the left-hand side of (\ref {eq_discrete_SP}) is not less than the right-hand side. 
We suppose 
\begin{eqnarray}\label{ineq_discrete_SP}
L^{n, M, i}_{k/n}> \left(\sum ^{N_1}_{j = 1}
\int ^{l/n}_0Q^{n, ij}_{[nr]}(X^{n, M})dL^{n, M, j}_r - Y^{n, M, i}_{l/n}\right) _+
\end{eqnarray}
for $l = 0. \ldots , k$. 
By (\ref {ineq_discrete_SP}) with $l = k$, we have 
\begin{eqnarray*}
Y^{n, M, i}_{k/n} + L^{n, M, i}_{k/n} - \int ^{k/n}_0Q^{n, ij}_{[nr]}(X^{n, M})dL^{n, M, j}_r = 
\varphi ^{n, M, i}_{k/n} > 0. 
\end{eqnarray*}
This inequality gives $\eta ^{n, M, i}_{k-1} = 0$ 
(by the definition of $\hat{\eta }^{n, M, i}_{k-1}$) 
and thus $L^{n, M, i}_{(k-1)/n} = L^{n, M, i}_{k/n}$. 
Using (\ref {ineq_discrete_SP}) again with $l = k - 1$, 
we similarly get $\eta ^{n, M, i}_{k-2} = 0$. 
Inductively we see that $L^{n, M, i}_t = 0$ for $t\in [0, k/n]$ and 
this contradicts (\ref {ineq_discrete_SP}). 
Then we obtain the assertion. 
\end{proof}

Using the above proposition and the same arguments as Theorem 2 in \cite {Shashiashvili}, 
we get the following proposition. 

\begin{prop} \ \label{prop_discrete_diff_l}For every $0 \leq l \leq k$, 
\begin{eqnarray*}
\sum ^{N_1}_{i = 1}|L^{n, M, i}_{k/n} - L^{n, M, i}_{l/n}|^2 \leq 
\hat{K}\max _{l\leq m\leq k}\sum ^{N_1}_{i = 1}|Y^{n, M, i}_{m/n} - Y^{n, M, i}_{l/n}|^2
\end{eqnarray*}
for some $\hat{K} > 0$ depending only on $V$. 
\end{prop}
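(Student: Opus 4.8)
The plan is to combine the explicit discrete Skorokhod representation of Proposition~\ref{prop_discrete_SP} with the complementarity mechanism used in its proof, and then to invert $I-V$ using $\rho(V)<1$; this is the discrete counterpart of Theorem~2 of \cite{Shashiashvili}. Fix $n$, $M$ and $0\le l\le k$, and abbreviate $c^i=L^{n,M,i}_{k/n}-L^{n,M,i}_{l/n}$ and $R^i_{m/n}=\sum_{j=1}^{N_1}\int_0^{m/n}Q^{n,ij}_{[nr]}(X^{n,M})\,dL^{n,M,j}_r$. Since each $L^{n,M,i}$ is the piecewise-linear interpolation of the partial sums of the non-negative quantities $\eta^{n,M,i}_m$, it is non-decreasing, so $c^i\ge 0$ for all $i$.

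First I would identify, for each $i$ with $c^i>0$, the largest index $p_i\in\{l+1,\dots,k\}$ at which $L^{n,M,i}$ strictly increases, i.e. $\eta^{n,M,i}_{p_i-1}>0$, equivalently $\hat\eta^{n,M,i}_{p_i-1}>0$; by maximality $L^{n,M,i}_{k/n}=L^{n,M,i}_{p_i/n}$. Unwinding the definitions of $\hat\eta^{n,M,i}$ and $e^{n,M,i}_{p_i-1}$ exactly as in the proof of Proposition~\ref{prop_discrete_SP}, the condition $\hat\eta^{n,M,i}_{p_i-1}>0$ means $e^{n,M,i}_{p_i-1}=-\varphi^{n,M,i}_{(p_i-1)/n}$, whence $\varphi^{n,M,i}_{p_i/n}=0$. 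Then (\ref{SP_like}) at $t=p_i/n$, with $1_{I_1}(i)=1$ since $i\le N_1$, gives $L^{n,M,i}_{p_i/n}=R^i_{p_i/n}-Y^{n,M,i}_{p_i/n}$, while Proposition~\ref{prop_discrete_SP} evaluated at $l$ gives $L^{n,M,i}_{l/n}\ge R^i_{l/n}-Y^{n,M,i}_{l/n}$. Subtracting,
\[
c^i \le (R^i_{p_i/n}-R^i_{l/n}) + (Y^{n,M,i}_{l/n}-Y^{n,M,i}_{p_i/n}).
\]

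Next I would turn this into a linear inequality. On each mesh interval $dL^{n,M,j}_r\ge 0$ and $Q^{n,ij}_{[nr]}(X^{n,M})\le V^{ij}$ with $V^{ij}\ge 0$ (indeed $\alpha^{n,i}_k>0$ and $\tilde\alpha^{n,j}_k>0$ by [A1] together with $N\ge 2$, so $Q^{n,ij}_k\ge 0$), hence $R^i_{p_i/n}-R^i_{l/n}\le\sum_j V^{ij}(L^{n,M,j}_{p_i/n}-L^{n,M,j}_{l/n})\le\sum_j V^{ij}c^j$, using $p_i\le k$ and monotonicity of $L^{n,M,j}$. Also $Y^{n,M,i}_{l/n}-Y^{n,M,i}_{p_i/n}\le\theta^i:=\max_{l\le m\le k}|Y^{n,M,i}_{m/n}-Y^{n,M,i}_{l/n}|$, and when $c^i=0$ the bound $c^i\le\sum_j V^{ij}c^j+\theta^i$ is trivial, so in all cases $c\le Vc+\theta$ componentwise with $c,\theta\ge 0$, $V\ge 0$ entrywise and $\rho(V)<1$. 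Then $I-V$ is an M-matrix with $(I-V)^{-1}=\sum_{m\ge 0}V^m\ge 0$ entrywise; applying this non-negative matrix to $(I-V)c\le\theta$ yields $0\le c\le(I-V)^{-1}\theta$, whence $\sum_i(c^i)^2=|c|^2\le\|(I-V)^{-1}\|^2|\theta|^2=\|(I-V)^{-1}\|^2\sum_i(\theta^i)^2$ with $\|\cdot\|$ the spectral norm. Finally $(\theta^i)^2\le\max_{l\le m\le k}\sum_{i'=1}^{N_1}|Y^{n,M,i'}_{m/n}-Y^{n,M,i'}_{l/n}|^2$ for each $i$, so summing the $N_1$ terms gives the assertion with $\hat K=N_1\|(I-V)^{-1}\|^2$, which depends only on $V$ (whose size is $N_1$).

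The only delicate point is the complementarity step in the second paragraph. Because of the linear interpolation (\ref{linear_interpolation_phi}), the relation $\int\varphi^{n,M,i}\,dL^{n,M,i}=0$ fails on whole intervals, so one cannot simply quote a continuous-time Skorokhod-map estimate; one has to descend to the grid, locate the last mesh time at which $L^{n,M,i}$ jumps, and show from the very definition of the truncated excess demand that the holding $\varphi^{n,M,i}$ vanishes there — which is precisely what Proposition~\ref{prop_discrete_SP} is designed to make rigorous. The remaining ingredients (the M-matrix inversion of $I-V$ and the crude passage from $\sum_i\max_m$ to $N_1\max_m\sum_i$) are routine, and one could obtain a sharper constant by working with a norm adapted to $V$, but this is not needed here.
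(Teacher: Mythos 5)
Your proof is correct and takes essentially the route the paper intends: the paper's ``proof'' is simply to invoke Proposition \ref{prop_discrete_SP} together with the arguments of Theorem 2 of Shashiashvili (1994), and your argument --- locating the last grid time of increase of $L^{n,M,i}$, where complementarity forces $\varphi^{n,M,i}$ to vanish, deducing the componentwise inequality $c\le Vc+\theta$ from $Q^{n,ij}\le V^{ij}$, and inverting $I-V$ via $\rho(V)<1$ --- is exactly that argument carried out on the grid. No gaps.
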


The equality (\ref {diff_phi}) also indicates 
\begin{eqnarray}\label{discrete_classical_SP}
\varphi ^{n, M, i}_{k/n} = \hat{Y}^{n, M, i}_k + \hat{L}^{n, M, i}_{k/n}, i = 1, \ldots , N_1, 
\end{eqnarray}
where 
\begin{eqnarray*}
\hat{Y}^{n, M, i}_k &=& 
\sum ^{k-1}_{l = 0}\Bigg\{ 
\psi _M(X^{n, M}_{l/n})\left( g^{n, i}_l(X^{n, M}) + 
\frac{1}{2}\gamma ^{n, i}_l(X^{n, M})(X^{n, M}_{(l+1)/n} - 
X^{n, M}_{l/n})^2 + \varepsilon ^{n, M, i}_l\right)\\&&\hspace{10mm} - 
\alpha ^{n, i}_l(X^{n, M})(X^{n, M}_{(l+1)/n} - X^{n, M}_{l/n})\Bigg\} , \\
\hat{L}^{n, M, i}_t &=& \sum ^{[nt] - 1}_{k = 0}\hat{\eta }^{n, M, i}_k + (nt - [nt])\hat{\eta }^{n, M, i}_{[nt]}. 
\end{eqnarray*}
The equality (\ref {discrete_classical_SP}) corresponds to the classical 
Skorokhod problem for each $i = 1, \ldots , N_1$. 
Similarly to Proposition \ref {prop_discrete_SP}, we obtain the following. 

\begin{prop} \ \label{prop_discrete_SP2}For every $k\in \Bbb {Z}_+$, 
$\hat{L}^{n, M, i}_{k/n} = \max _{0\leq l\leq k}(-\hat{Y}^{n, M, i}_{l/n})_+$. 
\end{prop}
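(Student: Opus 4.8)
The plan is to reproduce, essentially verbatim, the argument used for Proposition \ref{prop_discrete_SP}, now applied separately for each fixed index $i\in\{1,\ldots,N_1\}$ in the one-dimensional (classical) Skorokhod setting. The three structural facts I will invoke are: (i) $\hat{L}^{n,M,i}_{k/n}=\sum^{k-1}_{l=0}\hat{\eta}^{n,M,i}_l$ is non-decreasing in $k$ with $\hat{L}^{n,M,i}_0=0$, because each $\hat{\eta}^{n,M,i}_l=(-\tilde{e}^{n,M,i}_l(X^{n,M}_{(l+1)/n},X^{n,M})-\varphi^{n,M,i}_{l/n})_+\geq0$; (ii) the decomposition (\ref{discrete_classical_SP}), i.e. $\varphi^{n,M,i}_{k/n}=\hat{Y}^{n,M,i}_k+\hat{L}^{n,M,i}_{k/n}$; and (iii) the complementarity relation $\hat{\eta}^{n,M,i}_l>0\Rightarrow\varphi^{n,M,i}_{(l+1)/n}=0$, which follows from the definition (\ref{def_e}) of $e^{n,M,i}_l$ together with (\ref{fluctuation_phi}): a strictly positive $\hat{\eta}^{n,M,i}_l$ means the short-sale constraint binds, so $e^{n,M,i}_l=-\varphi^{n,M,i}_{l/n}$ and hence $\varphi^{n,M,i}_{(l+1)/n}=\varphi^{n,M,i}_{l/n}+e^{n,M,i}_l=0$.

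First I would record the easy inequality $\hat{L}^{n,M,i}_{k/n}\geq\max_{0\leq l\leq k}(-\hat{Y}^{n,M,i}_l)_+$. For each $l\leq k$, non-negativity of $\varphi^{n,M,i}_{l/n}$ and (ii) give $\hat{L}^{n,M,i}_{l/n}\geq-\hat{Y}^{n,M,i}_l$; by monotonicity from (i) we get $\hat{L}^{n,M,i}_{k/n}\geq-\hat{Y}^{n,M,i}_l$, and since also $\hat{L}^{n,M,i}_{k/n}\geq0$ this yields $\hat{L}^{n,M,i}_{k/n}\geq(-\hat{Y}^{n,M,i}_l)_+$; taking the maximum over $l$ finishes this direction.

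For the reverse inequality I would argue by contradiction, mirroring Proposition \ref{prop_discrete_SP}. Suppose $\hat{L}^{n,M,i}_{k/n}>(-\hat{Y}^{n,M,i}_l)_+$ for every $l=0,\ldots,k$. Taking $l=k$ and using (ii) gives $\varphi^{n,M,i}_{k/n}=\hat{Y}^{n,M,i}_k+\hat{L}^{n,M,i}_{k/n}>0$, so by (iii) we must have $\hat{\eta}^{n,M,i}_{k-1}=0$, hence $\hat{L}^{n,M,i}_{(k-1)/n}=\hat{L}^{n,M,i}_{k/n}$. Feeding this back with $l=k-1$ gives $\varphi^{n,M,i}_{(k-1)/n}=\hat{Y}^{n,M,i}_{k-1}+\hat{L}^{n,M,i}_{k/n}>0$, so $\hat{\eta}^{n,M,i}_{k-2}=0$; iterating downward shows $\hat{\eta}^{n,M,i}_l=0$ for all $l\leq k-1$, whence $\hat{L}^{n,M,i}_{k/n}=\hat{L}^{n,M,i}_0=0$, contradicting the assumption since $\max_{0\leq l\leq k}(-\hat{Y}^{n,M,i}_l)_+\geq0$. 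Therefore some $l^\ast\leq k$ satisfies $\hat{L}^{n,M,i}_{k/n}\leq(-\hat{Y}^{n,M,i}_{l^\ast})_+$, which combined with the first part yields the claimed identity.

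I do not expect a genuine obstacle; the only care needed is bookkeeping — confirming the correct time-shift in (iii), namely that a positive regulator increment $\hat{\eta}^{n,M,i}_l$ pins $\varphi^{n,M,i}_{(l+1)/n}$ (not $\varphi^{n,M,i}_{l/n}$) to zero, and checking the base case $k=0$, which is consistent because $\varphi^{n,M,i}_0=\Phi^i\geq0$ makes $(-\hat{Y}^{n,M,i}_0)_+=0=\hat{L}^{n,M,i}_0$. The linear-interpolation subtlety flagged after (\ref{SP_like}) is immaterial here, since the identity is asserted only at the mesh points $t=k/n$, where (ii) holds exactly.
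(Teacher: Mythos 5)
Your proof is correct and follows exactly the route the paper intends: the paper gives no separate argument for this proposition, stating only that it is obtained "similarly to Proposition \ref{prop_discrete_SP}," and your write-up is precisely that contradiction argument specialized to the one-dimensional decomposition (\ref{discrete_classical_SP}), with the complementarity $\hat{\eta}^{n,M,i}_l>0\Rightarrow\varphi^{n,M,i}_{(l+1)/n}=0$ correctly identified as the key step.
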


Next we evaluate the moment of the process. 

\begin{prop} \ \label{prop_moment_diff_X}
$\E [|X^{n, M}_{(k+1)/n} - X^{n, M}_{k/n}|^{24}]\leq C_M / n^{12}$, \ $k\in \Bbb {Z}_+$ for some $C_M > 0$. 
\end{prop}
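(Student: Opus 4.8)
The plan is to establish the moment bound by bootstrapping from the difference equation~(\ref{diff_X}), treating the quadratic term $\gamma^{n,i}_k(X^{n,M})(X^{n,M}_{(k+1)/n}-X^{n,M}_{k/n})^2$ as a small perturbation. First I would observe that, by Proposition~\ref{prop_range_XM} together with the cut-off $\psi_M$, the process $X^{n,M}$ effectively stays in $|x|\le M$ where all coefficients are bounded: $\alpha^{n,i}_k\ge\delta_0$ gives $\bar\alpha^n_k(X^{n,M})\ge N\delta_0>0$, and [A2] controls $\sup|\partial^l_xf^{n,i}_k|$, $\E[|\bar g^{n,i}_k|^{24}]$, $\E[|\tilde g^{n,i}_k|^{24}]$ uniformly. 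I would also need to bound the reflection increments $\hat\eta^{n,M,i}_k$: by their definition $\hat\eta^{n,M,i}_k=(-\tilde e^{n,M,i}_k(X^{n,M}_{(k+1)/n},X^{n,M})-\varphi^{n,M,i}_{k/n})_+$, and since $\tilde e^{n,M,i}_k(X^{n,M}_{k/n},X^{n,M})=g^{n,i}_k$ while the $x$-dependent part has bounded $x$-derivative on the relevant range, one gets $\hat\eta^{n,M,i}_k\le C_M(\,|g^{n,i}_k|+|X^{n,M}_{(k+1)/n}-X^{n,M}_{k/n}|+1\,)$ on $\{\varphi^{n,M,i}_{k/n}\ge 0\}$, which always holds by Proposition~\ref{prop_discrete_SP} (equivalently, $\varphi^{n,M,i}\in\mathcal C^N_+$ at grid points).

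Next, writing $\Delta_k=X^{n,M}_{(k+1)/n}-X^{n,M}_{k/n}$, I would rearrange~(\ref{diff_X}) into the schematic form
\begin{eqnarray*}
\Delta_k = \frac{1}{n}A^{n,M}_k + \frac{1}{\sqrt n}B^{n,M}_k + C^{n,M}_k\Delta_k^2 + \varepsilon^{n,M}_k,
\end{eqnarray*}
where $A$, $B$ collect the $\bar g$- and $\tilde g$-type terms divided by $\bar\alpha^n_k$ plus the reflection contributions, $C$ is $O(1)$, and $\varepsilon^{n,M}_k$ is the Taylor remainder, which is $O(|\Delta_k|^3)$ with an $O(1)$ coefficient by [A1]. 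For $n$ large the map $\delta\mapsto \frac1nA+\frac1{\sqrt n}B+C\delta^2+(\text{remainder})$ is a contraction on a small ball, so $\Delta_k$ is the unique small root and one obtains the a priori estimate $|\Delta_k|\le C_M(\frac1n|A^{n,M}_k|+\frac1{\sqrt n}|B^{n,M}_k|)$ on the event that the right side is itself small (which, together with the boundedness of the non-reflection part, forces $|\Delta_k|\le C_M/\sqrt n$ deterministically for the bounded pieces and $\le C_M(\hat\eta\text{-terms})$ otherwise). Then I would take $24$th moments: $\E[|\Delta_k|^{24}]\le C_M\big(n^{-24}\E[|A^{n,M}_k|^{24}]+n^{-12}\E[|B^{n,M}_k|^{24}]+\E[(\sum_i\hat\eta^{n,M,i}_k)^{24}]\big)$. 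The first two are $\le C_M/n^{12}$ by [A2]. For the reflection term I would use the bound above to get $\E[(\hat\eta^{n,M,i}_k)^{24}]\le C_M(\E[|g^{n,i}_k|^{24}]+\E[|\Delta_k|^{24}]+\ldots)$; combined with $g^{n,i}_k=\frac1n\bar g+\frac1{\sqrt n}\tilde g$ so $\E[|g^{n,i}_k|^{24}]\le C_M/n^{12}$, this yields $\E[|\Delta_k|^{24}]\le C_M/n^{12}+C_M\E[|\Delta_k|^{24}]$ with the second constant absorbable into the left side once we restrict attention to the genuinely nonlinear self-reference --- more carefully, since the $\hat\eta$ bound only involves $|\Delta_k|$ with an $O(1)$ coefficient while $\hat\eta$ itself enters $\Delta_k$ divided by $\bar\alpha^n_k$, a Gronwall-type / fixed-point absorption closes the estimate.

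The main obstacle I anticipate is handling the implicit/self-referential structure cleanly: $\Delta_k$ appears on both sides (through $\Delta_k^2$ and through $\hat\eta^{n,M,i}_k$, which depends on $X^{n,M}_{(k+1)/n}$), so one cannot simply take expectations of an explicit formula. The right approach is a two-step argument: first prove a deterministic (pathwise) a priori bound $|\Delta_k|\le C_M(n^{-1/2}+\text{bounded input})$ valid for all $n\ge n_0(M)$ by the contraction-mapping argument, guaranteeing a small root exists and is unique; then substitute this bound back to linearize the $\hat\eta$ dependence and take $24$th moments using only [A2] and [A3] (the latter is not actually needed here, only moment bounds). One should be slightly careful that the constant $n_0(M)$ is uniform and that for $n<n_0(M)$ the statement still holds trivially after enlarging $C_M$, since $|X^{n,M}|\le M$ makes $|\Delta_k|\le 2M$ and $n^{12}$ is bounded. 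All remaining steps are routine applications of [A2], Minkowski, and the elementary inequality $(a+b)^{24}\le 2^{23}(a^{24}+b^{24})$.
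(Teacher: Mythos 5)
Your overall strategy --- Taylor-expanding via (\ref{diff_X}) and bootstrapping --- is not the paper's, and it has a genuine gap at the absorption step. The reflection increments enter $\Delta_k = X^{n,M}_{(k+1)/n}-X^{n,M}_{k/n}$ through $\frac{1}{\bar\alpha^n_k}\sum_{i\in I_1}\hat\eta^{n,M,i}_k$, and your bound $\hat\eta^{n,M,i}_k\le |g^{n,i}_k|+K_0|\Delta_k|$ (which is the best one can do by the triangle inequality, since the $x$-dependent part of $\tilde e^{n,M,i}_k$ has slope as negative as $-K_0$) produces a self-referential term with coefficient $N_1K_0/\bar\alpha^n_k$. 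Since [A1] only gives $\bar\alpha^n_k\ge N\delta_0$, this coefficient is bounded by $N_1K_0/(N\delta_0)$, which is \emph{not} less than $1$ in general (e.g.\ whenever $K_0/\delta_0>N/N_1$). The same obstruction kills the contraction-mapping step: the map $\delta\mapsto\frac{1}{n}A+\frac{1}{\sqrt n}B+C\delta^2+\frac{1}{\bar\alpha^n_k}\sum_{i\in I_1}\hat\eta^{n,M,i}_k(\delta)$ has Lipschitz constant up to $N_1K_0/(N\delta_0)$ on \emph{every} neighbourhood of $0$ (the positive part in $\hat\eta$ can become active arbitrarily close to $\delta=0$ when $\varphi^{n,M,i}_{k/n}$ is near $-\psi_Mg^{n,i}_k$), so it need not be a contraction and neither the uniqueness of the ``small root'' nor the final inequality $\E[|\Delta_k|^{24}]\le C_M/n^{12}+C'_M\E[|\Delta_k|^{24}]$ with $C'_M<1$ follows. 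The crude bound on $\hat\eta$ discards exactly the sign information that makes the estimate work.

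The paper's proof avoids the bootstrap entirely by a one-line monotonicity argument on the \emph{unexpanded} aggregate excess demand. Set $\tilde f(x)=\sum_ie^{n,M,i}_k(x,X^{n,M};\varphi^{n,M,i}_{k/n})$; each summand $\max\{\tilde e^{n,M,i}_k(x,\cdot),-\varphi\}$ is still non-increasing in $x$, and the unconstrained agents contribute slope at most $-\delta_0$ each, so $\tilde f'(x)\le-(N-N_1)\delta_0<0$ (this is where $N_1<N$ is used). Since $\tilde f$ vanishes at the new price, the mean value theorem gives $|\Delta_k|\le|\tilde f(X^{n,M}_{k/n})|/((N-N_1)\delta_0)$ deterministically, and $\tilde f(X^{n,M}_{k/n})=\sum_{i\in I_1}\max\{\psi_Mg^{n,i}_k,-\varphi^{n,M,i}_{k/n}\}+\sum_{i\in I_2}\psi_Mg^{n,i}_k$ satisfies $|\tilde f(X^{n,M}_{k/n})|\le\sum_i|\psi_Mg^{n,i}_k|$ because $\varphi^{n,M,i}_{k/n}\ge0$ implies $|\max\{a,-\varphi\}|\le|a|$. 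The $24$th moment bound then follows from $g^{n,i}_k=\frac1n\bar g^{n,i}_k+\frac1{\sqrt n}\tilde g^{n,i}_k$ and [A2], with no self-reference to resolve. If you want to salvage your route, you must exploit this same monotonicity of the $\max$ rather than its Lipschitz bound; at that point you have reproduced the paper's argument.
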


\begin{proof} 
Set 
\begin{eqnarray*}
f(t) = \tilde {f}(t(X^{n, M}_{(k+1)/n} - X^{n, M}_{k/n}) + X^{n, M}_{k/n}), \ \ 
\tilde {f}(x) = \sum ^N_{i = 1}e^{n, M, i}_k(x, X^{n, M} ; \varphi ^{n, M, i}_{k/n}). 
\end{eqnarray*}
Then we have $f(1) = 0$ and
\begin{eqnarray*}
f(0) = \sum ^{N_1}_{i = 1}\max \{ \psi _M(X^{n, M}_{k/n})g^{n, i}_k(X^{n, M}), -\varphi ^{n, M, i}_{k-1} \}  + 
\sum ^{N}_{i = N_1 + 1}\psi _M(X^{n, M}_{k/n})g^{n, i}_k(X^{n, M}). 
\end{eqnarray*}
Using the fundamental theorem of calculus, we get 
\begin{eqnarray}
f(0) = f(0) - f(1) = -\int ^1_0\tilde {f}'(t(X^{n, M}_{(k+1)/n} - X^{n, M}_{k/n}) + X^{n, M}_{k/n})
dt(X^{n, M}_{(k+1)/n} - X^{n, M}_{k/n}). 
\end{eqnarray}
Since $\varphi ^{n, M, i}_t\geq 0$ for $i = 1, \ldots , N_1$ and 
$\tilde{f}'(x) \leq -N_2\delta _0$, we get 
\begin{eqnarray}\label{temp_moment1}
|X^{n, M}_{(k+1)/n} - X^{n, M}_{k/n}| \leq \frac{|f(0)|}{N_2\delta _0}
\leq 
\frac{1}{N_2\delta _0}\sum ^N_{i=1}|\psi _M(X^{n, M}_{k/n})g^{n, i}_k(X^{n, M})|. 
\end{eqnarray}
By [A2] and Proposition \ref {prop_range_XM}, we have 
\begin{eqnarray}\label{temp_moment2}
\E [\psi _M(X^{n, M}_{k/n})|g^{n, i}_k(X^{n, M})|^{24}] = 
\E [\sup _{|w|_\infty \leq M}|g^{n, i}_k(y)|^{24}] \leq \frac{C_M}{n^{12}}. 
\end{eqnarray}
Our assertion follows immediately by (\ref {temp_moment1}) and (\ref {temp_moment2}). 
\end{proof}

The above proposition and [A1]--[A2] lead us to the following. 

\begin{prop} \ \label{prop_moment_eta}
$\E [|\tilde {e}^{n, M, i}_k(X^{n, M}_{(k+1)/n}, X^{n, M})|^{24}] + 
\E [|\eta ^{n, M, i}_k|^{24}] + \E [|\hat{\eta }^{n, M, i}_k|^{24}] + 
\E [|\varepsilon ^{n, M, i}_k|^8]\leq C_M/n^{12}$, \ $k\in \Bbb {Z}_+$, 
for some $C_M > 0$. 
\end{prop}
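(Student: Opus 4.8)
The plan is to bound each of the four quantities pathwise by a combination of the one--step increment $\Delta^{n,M}_k := X^{n,M}_{(k+1)/n}-X^{n,M}_{k/n}$ and of $\sup_{|w|_\infty\le M}|g^{n,i}_k(w)|$, and then to take $24$th (resp. $8$th) moments, invoking Proposition~\ref{prop_moment_diff_X} for $\E[|\Delta^{n,M}_k|^{24}]\le C_M/n^{12}$ and [A2] together with $g^{n,i}_k=\frac1n\bar g^{n,i}_k+\frac1{\sqrt n}\tilde g^{n,i}_k$ for $\E[\sup_{|w|_\infty\le M}|g^{n,i}_k(w)|^{24}]\le C_M/n^{12}$. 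Since [A1], [A2] and Proposition~\ref{prop_moment_diff_X} are uniform in $n,k,i$, so will be the resulting constant. Three elementary facts will be used repeatedly: (i) by [A1] (see (\ref{cond_diff})), $\alpha^{n,i}_k(w)\in[\delta_0,K_0]$ and $|\partial_x f^{n,i}_k(x,w)|\le K_0$ for all $x,w$, while $f^{n,i}_k(w(k/n),w)=0$; (ii) $f^{n,i}_k(x,w)$ and $g^{n,i}_k(w)$ depend on $w$ only through $w(\cdot\wedge(k/n))$ (as $\tilde e^{n,i}_k$ is $\mathcal B_{k/n}$-measurable in $w$); (iii) if $\psi_M(X^{n,M}_{k/n})\ne 0$ then $|X^{n,M}_{k/n}|<M$, hence $|X^{n,M}_r|\le M$ for every $r\le k/n$ by Proposition~\ref{prop_range_XM}, and, conversely, $\psi_M(X^{n,M}_{k/n})=0$ forces $\Delta^{n,M}_k=0$ (this last point is read off from the proof of Proposition~\ref{prop_moment_diff_X}, where $|\Delta^{n,M}_k|\le(N_2\delta_0)^{-1}\sum_j|\psi_M(X^{n,M}_{k/n})g^{n,j}_k(X^{n,M})|$).

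For $\tilde e^{n,M,i}_k(X^{n,M}_{(k+1)/n},X^{n,M})$, I would unfold the definition of $\tilde e^{n,M,i}_k$. Its first summand has modulus at most $(1-\psi_M)\,\alpha^{n,i}_k(X^{n,M})\,|\Delta^{n,M}_k|\le K_0|\Delta^{n,M}_k|$ by (i). In the second summand, $|\psi_M(X^{n,M}_{k/n})f^{n,i}_k(X^{n,M}_{(k+1)/n},X^{n,M})|\le K_0|\Delta^{n,M}_k|$ by (i), and $|\psi_M(X^{n,M}_{k/n})g^{n,i}_k(X^{n,M})|\le\sup_{|w|_\infty\le M}|g^{n,i}_k(w)|$ by (ii)--(iii). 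Summing and taking $24$th moments, using Proposition~\ref{prop_moment_diff_X} and the moment estimate for $\sup_{|w|_\infty\le M}|g^{n,i}_k(w)|$ recalled above, gives the claimed bound for this term.

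For $i\in I_1$ one has $\varphi^{n,M,i}_{k/n}\ge 0$ for every $k$, by induction: the construction of the truncated process gives $\varphi^{n,M,i}_{(k+1)/n}=\max\{\varphi^{n,M,i}_{k/n}+\tilde e^{n,M,i}_k(X^{n,M}_{(k+1)/n},X^{n,M}),\,0\}$, and $\varphi^{n,M,i}_0=\Phi^i\ge 0$. Consequently $0\le\hat\eta^{n,M,i}_k=(-\tilde e^{n,M,i}_k(X^{n,M}_{(k+1)/n},X^{n,M})-\varphi^{n,M,i}_{k/n})_+\le|\tilde e^{n,M,i}_k(X^{n,M}_{(k+1)/n},X^{n,M})|$, and, since $0\le 1-\alpha^{n,i}_k/\bar\alpha^n_k\le 1$, also $|\eta^{n,M,i}_k|\le\hat\eta^{n,M,i}_k$. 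Both bounds are therefore inherited from the previous paragraph.

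Finally, $\varepsilon^{n,M,i}_k$ is the third--order Taylor remainder of $x\mapsto f^{n,i}_k(x,X^{n,M})$ expanded at $X^{n,M}_{k/n}$ and evaluated at $X^{n,M}_{(k+1)/n}$; in particular it vanishes when $\Delta^{n,M}_k=0$, which by (iii) includes the case $\psi_M(X^{n,M}_{k/n})=0$, so only $\{\Delta^{n,M}_k\ne 0\}$, on which $|X^{n,M}_{k/n}|<M$, matters. On $\{0<|\Delta^{n,M}_k|\le 1\}$ the point $uX^{n,M}_{(k+1)/n}+(1-u)X^{n,M}_{k/n}$ at which $\partial_x^3 f^{n,i}_k$ is evaluated lies in $[-(M+1),M+1]$ while $X^{n,M}(\cdot\wedge(k/n))$ stays in $[-M,M]$, so $|\varepsilon^{n,M,i}_k|\le C_M|\Delta^{n,M}_k|^3$ by [A2] and (ii). On the complementary (rare) event $\{|\Delta^{n,M}_k|>1\}$ I would instead write $\varepsilon^{n,M,i}_k=f^{n,i}_k(X^{n,M}_{(k+1)/n},X^{n,M})-\partial_x f^{n,i}_k(X^{n,M}_{k/n},X^{n,M})\Delta^{n,M}_k-\frac12\partial_x^2 f^{n,i}_k(X^{n,M}_{k/n},X^{n,M})(\Delta^{n,M}_k)^2$ (using $f^{n,i}_k(X^{n,M}_{k/n},X^{n,M})=0$) and bound each term using $|\partial_x f^{n,i}_k|\le K_0$ and the compact--set bound $|\partial_x^2 f^{n,i}_k|\le C_M$ from [A2] (valid since $|X^{n,M}_{k/n}|<M$ here), obtaining $|\varepsilon^{n,M,i}_k|\le 2K_0|\Delta^{n,M}_k|+\frac12 C_M|\Delta^{n,M}_k|^2\le C_M|\Delta^{n,M}_k|^3$ because $|\Delta^{n,M}_k|>1$. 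Either way $\E[|\varepsilon^{n,M,i}_k|^8]\le C\,\E[|\Delta^{n,M}_k|^{24}]\le C_M/n^{12}$ by Proposition~\ref{prop_moment_diff_X}. The only step that needs genuine care --- and essentially the sole obstacle --- is this last one: $\partial_x^3 f^{n,i}_k$ is controlled by [A2] only on compact sets, whereas $X^{n,M}_{(k+1)/n}$ may a priori leave $[-M,M]$, which is exactly why the dichotomy $\{|\Delta^{n,M}_k|\le 1\}$ versus $\{|\Delta^{n,M}_k|>1\}$ is introduced.
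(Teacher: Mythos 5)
Your proof is correct and follows exactly the route the paper intends: the paper gives no argument for this proposition beyond the remark that it follows from Proposition~\ref{prop_moment_diff_X} and [A1]--[A2], and your reductions (bounding $\tilde e^{n,M,i}_k$ by $K_0|\Delta^{n,M}_k|$ plus $\sup_{|w|_\infty\le M}|g^{n,i}_k(w)|$, then dominating $\hat\eta$ and $\eta$ by $|\tilde e^{n,M,i}_k|$ via $\varphi^{n,M,i}_{k/n}\ge 0$ and $0\le 1-\alpha^{n,i}_k/\bar\alpha^n_k\le 1$) are precisely the intended ones. Your dichotomy $\{|\Delta^{n,M}_k|\le 1\}$ versus $\{|\Delta^{n,M}_k|>1\}$ for the Taylor remainder correctly patches the one point the paper glosses over, namely that [A2] controls $\partial_x^3 f^{n,i}_k$ only on compacta while $X^{n,M}_{(k+1)/n}$ is not a priori confined to $[-M,M]$.
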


\begin{prop} \ \label{prop_bdd_L}
$\E [|L^{n, M}_t|^8] + \E [|\hat{L}^{n, M}_t|^8] \leq C_{M, t}$ for some $C_{M, t} > 0$. 
\end{prop}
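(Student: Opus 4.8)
The plan is to estimate $L^{n,M}$ and $\hat L^{n,M}$ in terms of the ``free'' driving processes $Y^{n,M,i}$ and $\hat Y^{n,M,i}$ (see (\ref{def_Yn1})--(\ref{def_Yn2}) and (\ref{discrete_classical_SP})), which carry no reflection term and admit a martingale–drift–remainder decomposition. Since $L^{n,M,i}_t$ and $\hat L^{n,M,i}_t$ are nondecreasing in $t$, it suffices to bound their values at grid times $\le t+1$. First I would take $l=0$ in Proposition~\ref{prop_discrete_diff_l} and use $L^{n,M,i}_0=0$, $Y^{n,M,i}_0=\Phi^i$, together with the monotonicity in $k$ of the right-hand side, to get $\max_k\sum_{i=1}^{N_1}|L^{n,M,i}_{k/n}|^2\le\hat K\max_m\sum_{i=1}^{N_1}|Y^{n,M,i}_{m/n}-\Phi^i|^2$; raising this to the fourth power (and using $|a|^8\le(|a|^2)^4$ and $(\sum_ib_i)^4\le N_1^3\sum_ib_i^4$) reduces the bound on $L^{n,M}$ to $\E[\max_{m\le nt+1}|Y^{n,M,i}_{m/n}-\Phi^i|^8]\le C_{M,t}$. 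Similarly Proposition~\ref{prop_discrete_SP2} gives $|\hat L^{n,M,i}_{k/n}|^8\le\max_{l\le k}|\hat Y^{n,M,i}_{l/n}|^8$, which reduces the bound on $\hat L^{n,M}$ to $\E[\max_{l\le nt+1}|\hat Y^{n,M,i}_{l/n}|^8]\le C_{M,t}$.

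To estimate $Y^{n,M,i}$ I would write $Y^{n,M,i}_{m/n}-\Phi^i=\sum_{j=1}^N\sum_{k<m}c^{n,ij}_kH^{n,M,j}_k$, where $c^{n,ij}_k=\delta_{ij}-\alpha^{n,i}_k(X^{n,M})/\bar\alpha^n_k(X^{n,M})$ satisfies $|c^{n,ij}_k|\le1$ by [A1], and decompose $H^{n,M,j}_k=\psi_M(X^{n,M}_{k/n})(\frac1{\sqrt n}\tilde g^{n,j}_k(X^{n,M})+\frac1nh^{n,j}_k(X^{n,M}))+\tilde\varepsilon^{n,M,j}_k$. The $\tilde g$-part is a discrete martingale for the filtration $\mathcal F^n_k=\sigma(\bar g^{n,\cdot}_l,\tilde g^{n,\cdot}_l:l<k)$, since $c^{n,ij}_k$, $\psi_M(X^{n,M}_{k/n})$ and $X^{n,M}(\cdot\wedge k/n)$ are $\mathcal F^n_k$-measurable while by [A3] the field $\tilde g^{n,j}_k(\cdot)$ is independent of $\mathcal F^n_k$ and mean zero; the discrete Burkholder--Davis--Gundy inequality bounds its eighth moment by $C\,\E[(\frac1n\sum_{k,j}\psi_M(X^{n,M}_{k/n})^2|\tilde g^{n,j}_k(X^{n,M})|^2)^4]$, which is $O((t+1)^4)$ after expanding the power and invoking [A2] together with $\psi_M(X^{n,M}_{k/n})^8|\tilde g^{n,j}_k(X^{n,M})|^8\le\sup_{|w|_\infty\le M}|\tilde g^{n,j}_k(w)|^8$ (legitimate on $\{\psi_M\neq0\}$ by Proposition~\ref{prop_range_XM}, trivial otherwise). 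The $h$-part is bounded by Jensen's inequality: its running supremum is at most $\frac1n\sum_{k,j}\psi_M(X^{n,M}_{k/n})|h^{n,j}_k(X^{n,M})|$, and since $\alpha^{n,j}_k,\bar\alpha^n_k,\gamma^{n,j}_k$ are uniformly bounded and bounded away from $0$ (by [A1]--[A2]) each $h^{n,j}_k$ is bounded in $L^8$ uniformly in $n,k$, giving an $O((t+1)^8)$ bound; the prefactor $\frac1n$ is essential, as it absorbs the $\sim nt$ summands.

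The estimate for $\hat Y^{n,M,i}$ is the same once its extra term is handled: by (\ref{diff_phi}) the increment of $\hat Y^{n,M,i}$ equals a ``$g$-type'' expression $\psi_M(g^{n,i}_m+\frac12\gamma^{n,i}_m(\Delta_mX)^2+\varepsilon^{n,M,i}_m)$ minus $\alpha^{n,i}_m(X^{n,M})\Delta_mX$, where $\Delta_mX=X^{n,M}_{(m+1)/n}-X^{n,M}_{m/n}$. The $g$-type part is treated as above (the $(\Delta_mX)^2$-terms controlled with the twelfth-order moment bound on $\Delta_mX$ from Proposition~\ref{prop_moment_diff_X}), and for $\sum_{m<l}\alpha^{n,i}_m(X^{n,M})\Delta_mX$ I would substitute $\Delta_mX=\frac1{\bar\alpha^n_m(X^{n,M})}\sum_{l'=1}^NH^{n,M,l'}_m+\sum_{j=1}^{N_1}\tilde\alpha^{n,j}_m(X^{n,M})\eta^{n,M,j}_m$ from (\ref{SP_like_X}): the first summand is again a sum of the $H^{n,M,l'}_m$ with coefficients bounded by $1$, while the second is at most $C\sum_{j=1}^{N_1}\sum_{m<l}\eta^{n,M,j}_m=C\sum_{j=1}^{N_1}L^{n,M,j}_{l/n}$ (using $\alpha^{n,i}_m\tilde\alpha^{n,j}_m\le C$ by [A1] and $\eta^{n,M,j}_m\ge0$), so its eighth moment is controlled by the bound on $L^{n,M}$ already obtained.

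The hard part will be the remainder: controlling $\E[\max_m|\sum_{k<m}\tilde\varepsilon^{n,M,i}_k|^8]$ (and the $\varepsilon,\hat\varepsilon$ contributions to $\hat Y$). Here $\tilde\varepsilon^{n,M,i}_k=\psi_M(X^{n,M}_{k/n})(\varepsilon^{n,M,i}_k+\frac{\gamma^{n,i}_k(X^{n,M})}{2\bar\alpha^n_k(X^{n,M})^2}\hat\varepsilon^{n,M}_k)$, and $\hat\varepsilon^{n,M}_k$ is a sum of monomials of degree two or three in $\hat\eta^{n,M,j}_k$, $\frac1{\sqrt n}\tilde g^{n,j}_k(X^{n,M})$ and $\rho^{n,M,j}_k=\frac1n\bar g^{n,j}_k(X^{n,M})+\frac12\gamma^{n,j}_k(X^{n,M})(\Delta_kX)^2+\varepsilon^{n,M,j}_k$, every factor being $O(n^{-1/2})$ or smaller. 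The naive bound $\E[(\sum_k|\tilde\varepsilon^{n,M,i}_k|)^8]\le(nt+1)^8\max_k\E[|\tilde\varepsilon^{n,M,i}_k|^8]$ is too wasteful, so instead, in each monomial I would retain one ``quadratic'' factor — $(\Delta_kX)^2$, or $(\hat\eta^{n,M,j}_k)^2$, or the term $\frac1n\bar g^{n,j}_k$ — and exploit that $\sum_k(\Delta_kX)^2$, $\sum_k(\hat\eta^{n,M,j}_k)^2$ and $\frac1n\sum_k|\bar g^{n,j}_k(X^{n,M})|$ are all of order $t$, with all moments up to order twelve under control, via Proposition~\ref{prop_moment_diff_X}, Proposition~\ref{prop_moment_eta}, the pointwise bound $\hat\eta^{n,M,j}_k\le C\sum_{l=1}^N\psi_M(X^{n,M}_{k/n})|g^{n,l}_k(X^{n,M})|$ (which follows from $\varphi^{n,M,j}\ge0$, [A1] and (\ref{temp_moment1})), and [A2]; the remaining, bounded and only slowly growing, factor of each monomial is handled in $L^{24}$ by [A2] and Proposition~\ref{prop_range_XM}. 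Choosing the Hölder exponents so that the high-order moment bounds of [A2] and of Propositions~\ref{prop_moment_diff_X}--\ref{prop_moment_eta} are used essentially optimally yields $\E[\max_m|\sum_{k<m}\tilde\varepsilon^{n,M,i}_k|^8]\le C_{M,t}$; this is an accounting problem rather than a conceptual one, but it is where the real work lies. Assembling the three pieces gives the desired bounds on $Y^{n,M,i}$ and $\hat Y^{n,M,i}$, whence the proposition follows via Propositions~\ref{prop_discrete_diff_l} and \ref{prop_discrete_SP2}.
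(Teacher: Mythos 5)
Your architecture --- reduce $L^{n,M}$ and $\hat L^{n,M}$ to the ``free'' processes $Y^{n,M,i}$, $\hat Y^{n,M,i}$ via the discrete Skorokhod representations and then estimate those --- differs from the paper's, and it has a genuine gap at exactly the place you flag as ``the hard part''. Routing the bound on $L^{n,M}$ through Proposition~\ref{prop_discrete_diff_l} and $Y^{n,M,i}$ forces you to prove $\E [\max _m|\sum _{k<m}\tilde{\varepsilon }^{n, M, i}_k|^8]\leq C_{M,t}$. But $\tilde{\varepsilon }^{n, M, i}_k$ contains, through $\hat{\varepsilon }^{n, M}_k\supset \psi _M(X^{n,M}_{k/n})^2(\sum _i\rho ^{n, M, i}_k)^2$, the monomial $\tfrac 14(\sum _i\gamma ^{n, i}_k)^2(\Delta _kX)^4$ with $\Delta _kX = X^{n, M}_{(k+1)/n}-X^{n, M}_{k/n}$. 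An eighth moment of $\sum _k(\Delta _kX)^4$ already requires, for the single summand $k=0$, the $32$nd moment of $\Delta _0X$, hence the $32$nd moment of $\tilde{g}^{n, i}_0$; [A2] supplies only $24$th moments and Proposition~\ref{prop_moment_diff_X} correspondingly stops at order $24$. The obstruction is not the order in $n$ (each factor is indeed $O(n^{-1/2})$) but the integrability order: no choice of H\"older exponents manufactures the missing moments, and $\E [|\tilde{\varepsilon }^{n, M, i}_0|^8]$ may simply be infinite. This is precisely why the paper's Proposition~\ref{prop_moment_eps} asserts only a \emph{fourth}-moment bound on $\sum _k|\tilde{\varepsilon }^{n, M, i}_k|$ --- and proves it after, and using, Proposition~\ref{prop_bdd_L}. (Your treatment of $\hat Y^{n,M,i}$ inherits the same problem, since substituting (\ref{SP_like_X}) reintroduces the $H^{n,M,j}_m$ and hence $\tilde{\varepsilon }$.)

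The paper's proof avoids $\tilde{\varepsilon }$ entirely. Since $\eta ^{n, M, i}_k=(1-\alpha ^{n, i}_k/\bar{\alpha }^n_k)\hat{\eta }^{n, M, i}_k\leq \hat{\eta }^{n, M, i}_k$, it suffices to bound $\sum _{i\leq N_1}\hat{L}^{n, M, i}$; by Proposition~\ref{prop_discrete_SP2} this is controlled by $\max _l(-\hat{Y}^{n, M, i}_{l/n})_+$, and $\hat Y$ is built from (\ref{diff_phi}) \emph{before} resubstitution, so its remainders involve only $(\Delta _kX)^2$ and $\varepsilon ^{n, M, i}_k\sim (\Delta _kX)^3$, both summable in $L^8$ with $24$th moments. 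The price is that $-\hat{Y}^{n, M, i}_{l/n}$ contains $\sum _{l'<l}(\alpha ^{n, i}_{l'}/\bar{\alpha }^n_{l'})\sum _j\hat{\eta }^{n, M, j}_{l'}$, i.e.\ the unknown $\sum _j\hat{L}^{n, M, j}$ reappears on the right-hand side; the paper absorbs it into the left-hand side using $\sum _{i\leq N_1}\alpha ^{n, i}_k/\bar{\alpha }^n_k\leq (1+N_2\delta _0/(N_1K_0))^{-1}<1$ (this is where $N_1<N$ is essential), and then finishes with Doob's inequality for the martingale parts and crude $L^8$ bounds for the drift parts. Your martingale and drift estimates are fine as far as they go, but to close the argument you must replace the $L^8$ control of $\sum _k|\tilde{\varepsilon }^{n, M, i}_k|$ by an absorption step of this kind.
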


\begin{proof} 
It suffices to estimate 
$\E [|\sum ^{N_1}_{i = 1}\hat{L}^{n, M, i}_{[nt]/n}|^8]$. 
By [A1], Proposition \ref{prop_discrete_SP2}, and the equality 
\begin{eqnarray}\label{eq_indicator}
X^{n, M}_{(k+1)/n} - X^{n, M}_{k/n} = (X^{n, M}_{(k+1)/n} - X^{n, M}_{k/n})1_{\{|X^{n, M}_{k/n}|\leq M\}}
\end{eqnarray}
(obtained by the definitions of $\tilde {e}^{n, M, i}_k$, $\hat{\eta }^{n, M, i}_k$, and 
$X^{n, M}_t$), we have 
\begin{eqnarray*}
\sum ^{N_1}_{i = 1}\hat{L}^{n, M, i}_{[nt]/n} &\leq & 
\sum ^{N_1}_{i = 1}\max _{0\leq k\leq [nt]}\Bigg| 
G^{n, M, i}_k - \tilde{G}^{n, M, i}_k + 
\frac{1}{n}\sum ^{k-1}_{l = 0}\left( \hat{h}^{n, M, i}_l - 
\frac{\alpha ^{n, i}_l(X^{n, M})}{\bar{\alpha }^n_l(X^{n, M})}
\sum ^N_{j = 1}\hat{h}^{n, M, j}_l \right)\\&&\hspace{20mm}  - 
\sum ^{k-1}_{l = 0}\frac{\alpha ^{n, i}_l(X^{n, M})}{\bar{\alpha }^n_l(X^{n, M})}
\sum ^{N_1}_{j = 1}\hat{\eta }^{n, M, j}_l
\Bigg|\\
&\leq & 
\sum ^{N_1}_{i = 1}\left( \max _{0\leq k\leq [nt]}|G^{n, M, i}_k| + 
\max _{0\leq k\leq [nt]}|\tilde{G}^{n, M, i}_k|\right) + 
\frac{2}{n}\sum ^N_{i = 1}\sum ^{[nt]-1}_{k = 0}|\hat{h}^{n, M, i}_k| \\&& + 
\sum ^{[nt]-1}_{k = 0}
\frac{\sum ^{N_1}_{i = 1}\alpha ^{n, i}_k(X^{n, M})}{\bar{\alpha }^n_k(X^{n, M})}
\sum ^{N_1}_{j = 1}\hat{\eta }^{n, M, j}_k, 
\end{eqnarray*}
where 
\begin{eqnarray*}
G^{n, M, i}_k &=& \frac{1}{\sqrt{n}}\sum ^{k - 1}_{l = 0}\psi _M(X^{n, M}_{l/n})\tilde{g}^{n, i}_l(X^{n, M}), \\
\tilde{G}^{n, M, i}_k &=& 
\frac{1}{\sqrt{n}}\sum ^N_{j = 1}\sum ^{k - 1}_{l = 0}\frac{\alpha ^{n, i}_l(X^{n, M})}{\bar{\alpha }^n_l(X^{n, M})}
\psi _M(X^{n, M}_{l/n})\tilde{g}^{n, j}_l(X^{n, M}), \\
\hat{h}^{n, M, i}_k &=& \psi _M(X^{n, M}_{k/ n})\left( 
\bar{g}^{n, i}_k + \frac{n}{2}\gamma ^{n, i}_k(X^{n, M})(X^{n, M}_{(k+1)/n} - X^{n, M}_{k/n})^2 + 
n\varepsilon ^{n, M, i}_k\right) . 
\end{eqnarray*}
Since 
\begin{eqnarray*}
\frac{\sum ^{N_1}_{i = 1}\alpha ^{n, i}_k(X^{n, M})}{\bar{\alpha }^n_k(X^{n, M})} \leq 
\frac{1}{1 + N_2\delta _0/ (N_1K_0)}
\end{eqnarray*}
by virtue of [A1], we get 
\begin{eqnarray*}
\sum ^{N_1}_{i = 1}\hat{L}^{n, M, i}_{[nt]/n} &\leq & 
\sum ^{N_1}_{i = 1}\left( \max _{0\leq k\leq [nt]}|G^{n, M, i}_k| + 
\max _{0\leq k\leq [nt]}|\tilde{G}^{n, M, i}_k|\right) + 
\frac{2}{n}\sum ^N_{i = 1}\sum ^{[nt]-1}_{k = 0}|\hat{h}^{n, M, i}_k|\\&& + 
\frac{1}{1 + N_2\delta _0/ (N_1K_0)}\sum ^{N_1}_{i = 1}\hat{L}^{n, M, i}_{[nt]/n},
\end{eqnarray*}
and thus 
\begin{eqnarray}\nonumber 
&&\sum ^{N_1}_{i = 1}\hat{L}^{n, M, i}_{[nt]/n}\\\label{temp_diff_L_1}&\leq & 
\left( 1 + \frac{N_1K_0}{N_2\delta _0}\right) \left\{ 
\sum ^{N_1}_{i = 1}\left( \max _{0\leq k\leq [nt]}|G^{n, M, i}_k| + 
\max _{0\leq k\leq [nt]}|\tilde{G}^{n, M, i}_k|\right) + 
\frac{2}{n}\sum ^N_{i = 1}\sum ^{[nt]-1}_{k = 0}|\hat{h}^{n, M, i}_k|\right\} . \hspace{15mm}
\end{eqnarray}
By [A4] and Propositions \ref {prop_moment_diff_X}--\ref {prop_moment_eta}, 
we have 
\begin{eqnarray}\label{temp_diff_L_2}
\sum ^N_{i = 1}\E \left [\left (\sum ^{[nt]-1}_{k = 0}|\hat{h}^{n, M, i}_k|\right )^8\right ] 
\leq C'_{M, t}
\end{eqnarray}
for some $C'_{M, t} > 0$. 
Moreover, since $(G^{n, M, i}_k)_k$ and $(\tilde{G}^{n, M, i}_k)_k$ are both 
$(\mathcal {G}^n_k)_k$-martingales, where 
$\mathcal {G}^n_k$ is the $\sigma $-algebra generated by 
$\bar{g}^{n, i}_l$ and $\tilde{g}^{n, i}_l$ for $i = 1, \ldots , N$, $l = 0,\ \ldots , k-1$ 
(note that $\mathcal {G}^n_0$ is a trivial $\sigma $-algebra), 
the Doob inequality implies 
\begin{eqnarray}\nonumber 
&&\E [\max _{0\leq k\leq [nt]}|G^{n, M, i}_k|^8 + \max _{0\leq k\leq [nt]}|\tilde{G}^{n, M, i}_k|^8]\leq  
\left(\frac{8}{7}\right)^8 
\E [|G^{n, M, i}_{[nt]}|^8 + |\tilde{G}^{n, M, i}_{[nt]}|^8]\\\label{temp_diff_L_3}
&&\leq 
\frac{2}{n^2}\left(\frac{8}{7}\right)^8 
\E \left[\left (\sum ^{[nt]-1}_{k = 0}\psi _M(X^{n, M}_{k/n})
\tilde{g}^{n, i}_k(X^{n, M})\right) ^8\right ]
\leq C''_{M, t}
\end{eqnarray}
for some $C''_{M, t} > 0$. 
Now by (\ref {temp_diff_L_1})--(\ref {temp_diff_L_3}), we obtain the assertion. 
\end{proof}

\begin{prop} \ \label{prop_moment_eps}For every $t > 0$ there exists a constant $C_{M, t} > 0$ such that 
\begin{eqnarray*}
\E \Big [\Big (\sum ^{[nt] - 1}_{k = 0}|\tilde{\varepsilon }^{n, M, i}_k|\Big) ^4\Big] \leq 
\frac{C_{M, t}}{n}. 
\end{eqnarray*}
\end{prop}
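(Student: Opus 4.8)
\medskip\noindent\emph{Proof sketch.}\ The plan is to multiply out the definition of $\tilde{\varepsilon}^{n,M,i}_k$ into elementary monomials and bound the $L^4(P^n)$-norm of each of the resulting partial sums $\sum^{[nt]-1}_{k=0}(\,\cdot\,)$ separately, then add the estimates. Since $\psi_M\leq 1$ and, by [A1]--[A2] together with $\bar{\alpha}^n_k(X^{n,M})\geq N\delta_0$, the coefficient $\psi_M(X^{n,M}_{k/n})|\gamma^{n,i}_k(X^{n,M})|/(2\bar{\alpha}^n_k(X^{n,M})^2)$ is bounded by a constant depending only on $M$, the triangle inequality reduces matters to estimating $\sum_k|\varepsilon^{n,M,i}_k|$ together with the partial sums of the five terms obtained by expanding $\hat{\varepsilon}^{n,M}_k$:
\begin{gather*}
\text{(a) }\Big(\sum_{i\in I_1}\hat{\eta}^{n,M,i}_k\Big)^2,\quad
\text{(b) }\frac{2}{\sqrt{n}}\Big(\sum_{i\in I_1}\hat{\eta}^{n,M,i}_k\Big)\psi_M\sum_{i\in I}\tilde{g}^{n,i}_k,\quad
\text{(c) }2\psi_M\Big(\sum_{i\in I_1}\hat{\eta}^{n,M,i}_k\Big)\sum_{i\in I}\rho^{n,M,i}_k,\\
\text{(d) }\frac{2}{\sqrt{n}}\psi_M^2\sum_{i,j\in I}\tilde{g}^{n,i}_k\rho^{n,M,j}_k,\qquad
\text{(e) }\psi_M^2\Big(\sum_{i\in I}\rho^{n,M,i}_k\Big)^2
\end{gather*}
(all coefficient functions evaluated at $X^{n,M}$ and $\psi_M$ at $X^{n,M}_{k/n}$). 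Two elementary facts will be used throughout: the crude maximal bound $\E[\max_{0\leq k\leq[nt]}|\xi_k|^p]\leq\sum^{[nt]}_{k=0}\E[|\xi_k|^p]$, which requires no martingale or independence structure, and the device already used for (\ref{temp_moment2}) in the proof of Proposition \ref{prop_moment_diff_X}, namely that by Proposition \ref{prop_range_XM} and [A2] the quantities $\psi_M(X^{n,M}_{k/n})|\tilde{g}^{n,i}_k(X^{n,M})|$, $\psi_M(X^{n,M}_{k/n})|\bar{g}^{n,i}_k(X^{n,M})|$ and $|\gamma^{n,i}_k(X^{n,M})|1_{\{|X^{n,M}_{k/n}|\leq M\}}$ all have $L^{24}$-norm bounded by a constant depending only on $M$.

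The ``genuinely cubic'' pieces --- $\sum_k|\varepsilon^{n,M,i}_k|$ and the terms (d) and (e) --- are handled by a plain use of Minkowski's inequality. Propositions \ref{prop_moment_diff_X} and \ref{prop_moment_eta} combined with [A2] give $\|\varepsilon^{n,M,i}_k\|_8\leq C_M n^{-3/2}$ and $\|\psi_M(X^{n,M}_{k/n})\rho^{n,M,i}_k\|_8\leq C_M n^{-1}$ (the three summands of $\rho^{n,M,i}_k$ being of orders $n^{-1}$, $n^{-1}$, $n^{-3/2}$ in $L^8$, the last two carrying the indicator $1_{\{|X^{n,M}_{k/n}|\leq M\}}$ via (\ref{eq_indicator})). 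Consequently the $L^4$-norms of the corresponding partial sums are of orders $[nt]\,n^{-3/2}$, $n^{-1/2}[nt]\,n^{-1}$ and $[nt]\,n^{-2}$, all $O(n^{-1/2})$, so their fourth moments are $O(n^{-2})$, well below $C_{M,t}/n$.

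The real content is in (a)--(c), each of which carries a factor $\hat{\eta}^{n,M,i}_k$ that is only of order $n^{-1/2}$ (Proposition \ref{prop_moment_eta}): a naive termwise sum over the $\Theta(n)$ indices $k$ would give a useless $O(n^{1/2})$. The point is that the short-sale constraint binds only $O(1)$ in aggregate, precisely $\sum^{[nt]-1}_{k=0}\hat{\eta}^{n,M,i}_k=\hat{L}^{n,M,i}_{[nt]/n}$, whose eighth moment is controlled by Proposition \ref{prop_bdd_L}. Thus for (a) I would write
\[
\sum^{[nt]-1}_{k=0}\Big(\sum_{i\in I_1}\hat{\eta}^{n,M,i}_k\Big)^2\leq\Big(\max_{0\leq k\leq[nt]-1}\sum_{i\in I_1}\hat{\eta}^{n,M,i}_k\Big)\sum_{i\in I_1}\hat{L}^{n,M,i}_{[nt]/n}
\]
and bound its fourth moment by Cauchy--Schwarz: the maximal bound together with $\E[|\hat{\eta}^{n,M,i}_k|^8]\leq(\E[|\hat{\eta}^{n,M,i}_k|^{24}])^{1/3}\leq C_M n^{-4}$ shows $\|\max_k\sum_{i\in I_1}\hat{\eta}^{n,M,i}_k\|_8=O(n^{-3/8})$, while Proposition \ref{prop_bdd_L} gives $\|\sum_{i\in I_1}\hat{L}^{n,M,i}_{[nt]/n}\|_8=O(1)$, whence the fourth moment is $O(n^{-3/2})$. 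The same recipe --- pull out the full sum $\sum_{i\in I_1}\hat{L}^{n,M,i}_{[nt]/n}$, bound the remaining factor by its maximum over $k\leq[nt]$, then Cauchy--Schwarz --- handles (b), where the surviving maximum $\max_k\psi_M|\sum_{i\in I}\tilde{g}^{n,i}_k|$ is $O(n^{1/8})$ in $L^8$ so that the explicit $n^{-1/2}$ prefactor wins, and (c), where $\max_k\psi_M|\sum_{i\in I}\rho^{n,M,i}_k|$ is even smaller; in both cases the fourth moment is again $O(n^{-3/2})$. Adding the six estimates gives $\E[(\sum_k|\tilde{\varepsilon}^{n,M,i}_k|)^4]=O(n^{-3/2})\leq C_{M,t}/n$. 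The hard part --- indeed the only reason the proposition is not trivial --- is exactly this last point: the reflection increments $\hat{\eta}^{n,M,i}_k$ are individually only $O(n^{-1/2})$ while there are $\Theta(n)$ of them, and the loss is recovered solely through the a priori bound on the total regulator $\hat{L}^{n,M}$ supplied by Proposition \ref{prop_bdd_L}.
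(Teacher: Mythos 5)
Your proof is correct and takes essentially the same route as the paper: the paper likewise isolates the reflection terms by writing $\sum _k|\hat{\varepsilon }^{n, M}_k|\leq \bigl( \sum _{i}\hat{L}^{n, M, i}_{[nt]}\bigr) \max _k|\hat{\psi }^{n, M}_k| + \sum _k|\pi ^{n, M}_k|$ --- your items (a)--(c) are just the monomials of $\hat{\psi }^{n, M}_k$ and (d)--(e) those of $\pi ^{n, M}_k$ --- and then applies the Cauchy--Schwarz inequality together with Proposition \ref {prop_bdd_L} exactly as you do. Your more careful exponent bookkeeping actually yields $O(n^{-3/2})$, which comfortably delivers the stated $C_{M, t}/n$, whereas the final display of the paper's own proof only records the cruder bound $C''''_{M, t}(n^{-1/2} + n^{-1})$.
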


\begin{proof} 
It suffices to show this for $\hat{\varepsilon }^{n, M}_k$ instead of $\tilde{\varepsilon }^{n, M, i}_k$. 
A straightforward calculation gives 
\begin{eqnarray*}
\E \Big [\Big (\sum ^{[nt] - 1}_{k = 0}|\hat{\varepsilon }^{n, M}_k|\Big) ^4\Big] &\leq & 
\E \Big [\Big \{ 
\Big( \sum ^{N_1}_{i = 1}\hat{L}^{n, M, i}_{[nt]}\Big) 
\max _{0\leq k\leq [nt] - 1}|\hat{\psi }^{n, M}_k| + 
\sum ^{[nt] - 1}_{k = 0}|\pi ^{n, M}_k|\Big\} ^4\Big] \\\nonumber 
&\leq & 
8\E \Big [ 
\Big( \sum ^{N_1}_{i = 1}\hat{L}^{n, M, i}_{[nt]}\Big)^4 
\max _{0\leq k\leq [nt] - 1}|\hat{\psi }^{n, M}_k|^4\Big] + 
8N^3\sum ^{[nt] - 1}_{k = 0}\E [|\pi ^{n, M}_k|^4], 
\end{eqnarray*}
where 
\begin{eqnarray*}
\hat{\psi }^{n, M}_k &=& 
\sum ^{N_1}_{i = 1}\hat{\eta }^{n, M, i}_k + 
2\psi _M(X^{n, M}_{k/n})\sum ^N_{j = 1}\left (\frac{1}{\sqrt{n}}\tilde{g}^{n, i}_k(X^{n, M}) + 
\rho ^{n, M, i}_k\right ), \\
\pi ^{n, M}_k &=& 
\psi _M(X^{n, M}_{k/n})^2\left\{ \frac{2}{\sqrt{n}}
\sum ^N_{i, j = 1}\tilde{g}^i_k(X^{n, M})\rho ^{n, M, j}_k + 
\left (\sum ^N_{i = 1}\rho ^{n, M, i}_k\right )^2\right\}. 
\end{eqnarray*}
Since [A2] and Proposition \ref {prop_moment_diff_X} imply 
$\E [\psi _M(X^{n, M})(\rho ^{n, M, i}_k)^8] \leq C'_M/n^4$ 
and thus $\E [(\hat{\psi }^{n, M}_k)^8] \leq C''_M/n^4$, \ $\E [(\pi ^{n, M}_k)^4]\leq C''_M/n^2$ 
for some $C'_M, C''_M > 0$ (by virtue of Proposition \ref {prop_moment_eta}), 
using Proposition \ref {prop_bdd_L}, we get 
\begin{eqnarray*}
\E \Big [\Big (\sum ^{[nt] - 1}_{k = 0}|\hat{\varepsilon }^{n, M}_k|\Big) ^4\Big] &\leq & 
C'''_{M, t}\left \{ \E \Big [ 
\Big( \sum ^{N_1}_{i = 1}\hat{L}^{n, M, i}_{[nt]}\Big)^8 \Big]^{1/4}
\left( \sum ^{[nt]-1}_{k = 0}\E  [|\hat{\psi }^{n, M}_k|^8]\right) ^{1/4} + 
\frac{1}{n^2} \right\} \\
&\leq & C''''_{M, t}\left( \frac{1}{\sqrt{n}} + \frac{1}{n}\right) 
\end{eqnarray*}
for some $C'''_{M, t}, C''''_{M, t} > 0$. This implies the assertion. 
\end{proof}

\begin{prop} \ \label{prop_bdd_Z}
$\E [\sup _{0\leq t\leq T}|Z^{n, M}_t|^4] \leq C_{M, T}$ for some $C_{M, T} > 0$. 
\end{prop}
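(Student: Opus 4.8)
The plan is to decompose $Z^{n,M,i}_t = \sum_{k=0}^{[nt]-1} H^{n,M,i}_k(X^{n,M}) + (nt-[nt])H^{n,M,i}_{[nt]}(X^{n,M})$ according to the three pieces of $H^{n,M,i}_k$, namely the martingale term $\frac{1}{\sqrt{n}}\psi_M(X^{n,M}_{k/n})\tilde g^{n,i}_k(X^{n,M})$, the drift term $\frac{1}{n}\psi_M(X^{n,M}_{k/n})h^{n,i}_k(X^{n,M})$, and the remainder term $\tilde\varepsilon^{n,M,i}_k$. Since $|Z^{n,M,i}_t|$ over $t\in[0,T]$ is controlled (up to the last fractional increment, which is bounded by a single $|H^{n,M,i}_{[nt]}|$) by $\max_{0\le k\le [nT]}\bigl|\sum_{l=0}^{k-1}H^{n,M,i}_l\bigr|$ plus $\max_{0\le k\le [nT]}|H^{n,M,i}_k|$, it suffices to bound the fourth moment of each of these three partial-sum processes separately and then combine via the triangle inequality and the elementary inequality $(a+b+c)^4 \le 27(a^4+b^4+c^4)$.

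First I would handle the martingale part: the process $k\mapsto \frac{1}{\sqrt n}\sum_{l=0}^{k-1}\psi_M(X^{n,M}_{l/n})\tilde g^{n,i}_l(X^{n,M})$ is a $(\mathcal G^n_k)_k$-martingale by [A3] (the $\tilde g^{n,i}_k$ are mean-zero and independent across $k$), exactly as in the proof of Proposition \ref{prop_bdd_L}. Applying Doob's $L^4$ inequality and then the Burkholder--Davis--Gundy inequality (or a direct expansion of the fourth moment using independence and [A2], which gives $\E[|\tilde g^{n,i}_k|^{24}]\le C_M$, hence certainly $\E[|\tilde g^{n,i}_k|^4]\le C_M$), the fourth moment of the maximum is bounded by $C_{M,T}\, n^{-2}\cdot ([nT])^2 \le C_{M,T}$, uniformly in $n$. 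The drift part $\frac{1}{n}\sum_{l=0}^{k-1}\psi_M(X^{n,M}_{l/n})h^{n,i}_l(X^{n,M})$ is handled pathwise: $\bigl|\frac1n\sum_{l=0}^{[nT]-1}|\psi_M h^{n,i}_l|\bigr|^4 \le ([nT]/n)^3\cdot \frac1n\sum_{l=0}^{[nT]-1}|\psi_M h^{n,i}_l|^4$ by Jensen, and taking expectations and using [A2] together with Proposition \ref{prop_moment_diff_X} (to control the $\gamma^{n,i}_k (X^{n,M}_{(k+1)/n}-X^{n,M}_{k/n})^2$ piece inside $h^{n,i}_k$) gives a bound $\le C_{M,T}$. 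The remainder part is exactly the content of Proposition \ref{prop_moment_eps}, which gives $\E\bigl[\bigl(\sum_{k=0}^{[nT]-1}|\tilde\varepsilon^{n,M,i}_k|\bigr)^4\bigr]\le C_{M,T}/n$, and this dominates $\max_{0\le k\le[nT]}\bigl|\sum_{l=0}^{k-1}\tilde\varepsilon^{n,M,i}_l\bigr|^4$ as well. Finally the leftover single increment $\max_{0\le k\le[nT]}|H^{n,M,i}_k|^4$ is bounded, after summing the contributions of its three constituents, by $\E[\max_k |H^{n,M,i}_k|^4] \le \sum_k \E[|H^{n,M,i}_k|^4]$, which is $O(1)$ by [A2], Proposition \ref{prop_moment_diff_X}, and Proposition \ref{prop_moment_eta}.

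The main obstacle is the mild circularity lurking in the drift and remainder estimates: $h^{n,i}_k$ and $\tilde\varepsilon^{n,M,i}_k$ both involve the price increments $X^{n,M}_{(k+1)/n}-X^{n,M}_{k/n}$, whose moments are controlled by Proposition \ref{prop_moment_diff_X}, and $\tilde\varepsilon^{n,M,i}_k$ additionally involves $\hat L^{n,M}$, controlled by Proposition \ref{prop_bdd_L}. However, both of those propositions have already been established independently of the present one (Proposition \ref{prop_moment_diff_X} via the clearing equation and [A1]--[A2], and Proposition \ref{prop_bdd_L} via the discrete Skorokhod representation), so there is in fact no circularity — one must simply be careful to invoke them in the right order and to keep track of the $\psi_M$-truncation so that, by Proposition \ref{prop_range_XM}, all suprema of the random functions are taken over $\{|w|_\infty\le M\}$ and hence fall under the hypotheses of [A2]. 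Assembling the three fourth-moment bounds and the single-increment bound with the triangle inequality then yields $\E[\sup_{0\le t\le T}|Z^{n,M}_t|^4]\le C_{M,T}$, as claimed.
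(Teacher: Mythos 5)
Your proposal is correct and follows essentially the same route as the paper: decompose $Z^{n,M,i}$ into the martingale part $G^{n,M,i}$ (handled by the Doob estimate already derived in the proof of Proposition \ref{prop_bdd_L}), the $O(1/n)$ drift part involving $h^{n,i}_k$ (handled by Jensen and [A2]), and the remainder $\tilde{\varepsilon}^{n,M,i}_k$ (handled by Proposition \ref{prop_moment_eps}). The only slight inaccuracy is that, in the paper's definition, $h^{n,i}_k$ contains the product $\sum_{j,m}\tilde{g}^{n,j}_k\tilde{g}^{n,m}_k$ rather than the squared price increment, so its fourth moment is controlled directly by [A2] without needing Proposition \ref{prop_moment_diff_X}; this does not affect the validity of the argument.
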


\begin{proof} 
Our assertion is obtained from [A2], Proposition \ref {prop_moment_eps}, (\ref {temp_diff_L_3}),
by calculating 
\begin{eqnarray*}
&&\E [\sup _{0\leq t\leq T}|Z^{n, M}_t|^4]\\&\leq & 
\E \left [\max _{0\leq k\leq [nT]}\left| \sum ^k_{l = 0}\left\{ \psi _M(X^{n, M}_{k/n})\left( 
\frac{1}{\sqrt{n}}\tilde{g}^{n, i}_k(X^{n, M}) + 
\frac{1}{n}h^{n, i}_k(X^{n, M})\right) + \tilde{\varepsilon }^{n, M, i}_k\right\} \right| ^4\right ]\\
&\leq & 
C'_{M, T}\left\{ \E [\max _{0\leq k\leq [nT]}|G^{n, M, i}_k|^4] + 
\frac{1}{n}\sum ^{[nT]}_{k = 0}\E [\sup _{|w|_\infty \leq M}|h^{n, M, i}_k|^4] + 
\E [ |\sum ^{[nT]}_{k = 0}\tilde{\varepsilon }^{n, M, i}_k| ^4] \right\} 
\end{eqnarray*}
for some $C'_{M, T} > 0$. 
\end{proof}

\begin{prop} \ \label{prop_tight}Let $(\xi ^n_k)_k$ be uniformly bounded random variables such that 
$\xi ^n_k$ is $\mathcal {G}^n_{k-1}$-adapted. 
(Here, $(\mathcal {G}^n_k)_k$ is defined as in the proof of Proposition \ref {prop_bdd_L}.) 
Put $\hat{Z}^{n, M, i}_t = \int ^t_0\xi ^n_{[nr]}dZ^{n, M, i}_r$. 
Then $(\hat{Z}^{n, M}_t)_t$ is tight on $\mathcal {C}$. 
\end{prop}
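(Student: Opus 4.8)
The plan is to establish tightness of $(\hat{Z}^{n,M}_t)_t$ on $\mathcal{C}$ via the standard two-part criterion: (i) a bound on the moments at each fixed time, uniform in $n$, and (ii) a modulus-of-continuity estimate of Kolmogorov type, namely $\E[|\hat{Z}^{n,M,i}_t - \hat{Z}^{n,M,i}_s|^4]\leq C_{M,T}(t-s)^2$ for $0\leq s\leq t\leq T$. Since $\hat{Z}^{n,M,i}$ is a piecewise-linear interpolation of a sum, it suffices to prove the increment bound at the mesh points $s = l/n$, $t = m/n$ and then absorb the interpolation error; the uniform bound at a fixed time follows a fortiori from the increment bound together with Proposition \ref{prop_bdd_Z}.

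**The key step** is to decompose $\hat{Z}^{n,M,i}_t$ according to the structure of $H^{n,M,i}_k$, writing the increment over $[l/n, m/n]$ as the sum of three pieces: a martingale piece $\frac{1}{\sqrt n}\sum_{k=l}^{m-1}\xi^n_k\psi_M(X^{n,M}_{k/n})\tilde g^{n,i}_k(X^{n,M})$, a bounded-variation piece $\frac1n\sum_{k=l}^{m-1}\xi^n_k\psi_M(X^{n,M}_{k/n})h^{n,i}_k(X^{n,M})$, and the error piece $\sum_{k=l}^{m-1}\xi^n_k\tilde\varepsilon^{n,M,i}_k$. For the martingale piece, because $\xi^n_k$ is $\mathcal{G}^n_{k-1}$-adapted and $\tilde g^{n,i}_k$ has mean zero and is $\sigma(\bar g^{n,\cdot}_k,\tilde g^{n,\cdot}_k)$-measurable, the summands form a martingale-difference sequence with respect to $(\mathcal{G}^n_k)_k$ by [A3]; the Burkholder--Davis--Gundy inequality then gives a fourth-moment bound of order $\big(\frac{m-l}{n}\big)^2$ using the uniform $24$th-moment control on $\tilde g^{n,i}_k$ from [A2] (and the boundedness of $\xi^n_k$, $\psi_M$, $\alpha$-ratios). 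For the bounded-variation piece, Hölder's inequality in the sum together with [A2] and Proposition \ref{prop_moment_diff_X} bounds the fourth moment by $\big(\frac{m-l}{n}\big)^4\cdot C_M$, which is $O((t-s)^2)$ on $[0,T]$. The error piece is handled by Proposition \ref{prop_moment_eps}, which already provides $\E[(\sum_{k=0}^{[nt]-1}|\tilde\varepsilon^{n,M,i}_k|)^4]\leq C_{M,t}/n$; restricting the sum to $k\in\{l,\dots,m-1\}$ and using that $|\tilde\varepsilon^{n,M,i}_k|$ is non-negative gives a bound $\leq C_{M,T}/n$ uniformly, which is certainly $O((t-s)^2)$ when $t-s\geq 1/\sqrt n$, while for $t-s<1/\sqrt n$ one uses instead the per-term estimate $\E[|\tilde\varepsilon^{n,M,i}_k|^4]\leq C_M/n^{?}$ coming from Proposition \ref{prop_moment_diff_X} applied to the cube of the increment.

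**Assembling these**, the combined increment satisfies $\E[|\hat{Z}^{n,M,i}_{m/n} - \hat{Z}^{n,M,i}_{l/n}|^4]\leq C_{M,T}\big((m-l)/n\big)^2$ for all $0\leq l\leq m\leq [nT]$, and interpolating linearly between mesh points preserves this up to a constant. By the Kolmogorov--Chentsov tightness criterion (e.g. Theorem 12.3 in Billingsley, or Kushner's version for discrete-time-origin processes), together with tightness of the initial values (which are deterministic here), the family $(\hat{Z}^{n,M}_t)_t$ is tight on $\mathcal{C}$.

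**The main obstacle** I anticipate is the martingale-difference verification and the bookkeeping in the BDG step: one must confirm that $\xi^n_k\psi_M(X^{n,M}_{k/n})\tilde g^{n,i}_k(X^{n,M})$ is genuinely a martingale difference despite $X^{n,M}_{k/n}$ depending on the past $\tilde g$'s — this works precisely because $\psi_M(X^{n,M}_{k/n})$ and $\xi^n_k$ are $\mathcal{G}^n_{k-1}$-measurable (the interpolation at time $k/n$ depends only on increments up to step $k-1$) while $\tilde g^{n,i}_k$ is conditionally mean-zero given $\mathcal{G}^n_k\subseteq\mathcal{G}^n_{k}$, independent of the earlier block by [A3]. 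A secondary subtlety is that the quadratic variation appearing in BDG involves $\sum_k|\xi^n_k\psi_M\tilde g^{n,i}_k|^2$, whose fourth moment must be controlled uniformly; this is where the $24$th-moment hypothesis in [A2] (rather than a mere $8$th) is used, to afford the repeated applications of Hölder. Everything else is routine once the decomposition is in place.
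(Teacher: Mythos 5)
Your route is genuinely different from the paper's. You aim for a fourth-moment Kolmogorov bound $\E [|\hat{Z}^{n, M, i}_t - \hat{Z}^{n, M, i}_s|^4]\leq C_{M, T}(t-s)^2$, whereas the paper uses the Kesten--Papanicolaou mixed criterion: it proves $\E [|\hat{Z}^{n, M, i}_u - \hat{Z}^{n, M, i}_t|^2\, |\hat{Z}^{n, M, i}_t - \hat{Z}^{n, M, i}_s|]\leq C_{M, T}(u-s)^{3/2}$ together with $\limsup _n\E [|\hat{Z}^{n, M, i}_u - \hat{Z}^{n, M, i}_t|^2]\leq C_{M, T}(u-t)$, by expanding the square of the sum via $(\sum _lx_l)^2 = \sum _lx_l^2 + 2\sum _lx_l(x_1+\cdots +x_l)$ and killing the cross terms with the same conditional-mean-zero property of $\tilde{g}^{n, i}_k$ that you invoke; no Burkholder--Davis--Gundy inequality is needed, only second moments of increments enter, and the third factor is absorbed by the Cauchy--Schwarz inequality. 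Your martingale-difference verification and the treatment of the $\frac{1}{n}h^{n, i}_k$ part are correct and essentially parallel to the paper's computation.

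The gap is in the error piece. Your primary bound $\E [(\sum ^{m-1}_{k = l}|\tilde{\varepsilon }^{n, M, i}_k|)^4]\leq C_{M, T}/n$ only yields $O((t-s)^2)$ for $t - s\geq n^{-1/2}$, as you note, so the whole weight of the argument falls on the per-term estimate that you leave as ``$C_M/n^{?}$'' --- and that estimate does not come from Proposition \ref {prop_moment_diff_X} applied to the cubic Taylor remainder alone. By its definition, $\tilde{\varepsilon }^{n, M, i}_k$ contains, besides $\varepsilon ^{n, M, i}_k$, the reflection-induced term $\hat{\varepsilon }^{n, M}_k$, which is built from products involving $\hat{\eta }^{n, M, i}_k = (-\tilde{e}^{n, M, i}_k - \varphi ^{n, M, i}_{k/n})_+$; controlling these requires Proposition \ref {prop_moment_eta} (namely $\E [|\hat{\eta }^{n, M, i}_k|^{24}]\leq C_M/n^{12}$), not Proposition \ref {prop_moment_diff_X}. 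The exponent you need is $\E [|\tilde{\varepsilon }^{n, M, i}_k|^4]\leq C_M/n^4$: one checks $\E [|\varepsilon ^{n, M, i}_k|^4]\leq C_M/n^6$ and, via the H\"older inequality and the eighth-moment bounds $\E [|\hat{\eta }^{n, M, i}_k|^8]\leq C_M/n^4$ and $\E [|\rho ^{n, M, i}_k|^8]\leq C_M/n^4$, that $\E [|\hat{\varepsilon }^{n, M}_k|^4]\leq C_M/n^4$. Given this, the Minkowski inequality gives $\E [(\sum ^{m-1}_{k = l}|\tilde{\varepsilon }^{n, M, i}_k|)^4]^{1/4}\leq C_M(m-l)/n$, hence a contribution of order $((m-l)/n)^4\leq T^2((m-l)/n)^2$, which closes your argument at mesh points; the off-mesh case is saved not ``up to a constant'' generically but by the extra factor $(n(t-s))^4\leq (n(t-s))^2$ supplied by the linear interpolation within a mesh cell. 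So your proof is reparable, but as written the decisive estimate is missing and attributed to the wrong source.
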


\begin{proof} 
It suffices to show 
\begin{eqnarray}\label{temp_tight_1_1}
\E [|\hat{Z}^{n, M, i}_u - \hat{Z}^{n, M, i}_t|^2|\hat{Z}^{n, M, i}_t - \hat{Z}^{n, M, i}_s|]\leq 
C_{M, T}(u-s)^{3/2}, \ \ n\in \Bbb {N} 
\end{eqnarray}
and 
\begin{eqnarray}\label{temp_tight_1_2}
\limsup _{n\rightarrow \infty }\E [|\hat{Z}^{n, M, i}_u - \hat{Z}^{n, M, i}_t|^2]\leq C_{M, T}(u-t)
\end{eqnarray}
for any $0 \leq s\leq t\leq u\leq T$ and for some $C_{M, T} > 0$ 
(cf. \cite {Kesten-Papanicolaou}). 
Set $\Phi  = |\hat{Z}^{n, M, i}_t - \hat{Z}^{n, M, i}_s|$ for brevity. 
Using the inequality 
\begin{eqnarray*}
\Big( \sum ^k_{l=1}x_l\Big) ^2 = \sum ^k_{l=1}x^2_l + 2\sum ^k_{l=1}x_l(x_1+\cdots + x_l),\ \ 
x_1,\ldots ,x_k\in \Bbb {R} 
\end{eqnarray*}
and the uniform boundedness of $(\xi ^n_k)_k$, we get 
\begin{eqnarray}\nonumber 
&&\E [|\hat{Z}^{n, M, i}_u - \hat{Z}^{n, M, i}_t|^2\Phi |]\\&\leq & 
3 \E \left [\left \{ |\xi ^n_{[nu]}H^{n, M, i}_{[nu]}|^2 + |\xi ^n_{[nt]}H^{n, M, i}_{[nt]}|^2 + 
\left( \sum ^{[nu]-1}_{k = [nt]}\xi ^n_kH^{n, M, i}_k\right)^2 \right\} \Phi \right ]\\\nonumber 
&\leq & 
C'\Bigg\{ \left (\E [(H^{n, M, i}_{[nu]})^4]^{1/2} + \E [(H^{n, M, i}_{[nt]})^4]^{1/2} + 
\sum ^{[nu] - 1}_{k = [nt]}\E [(H^{n, M, i}_k)^4]^{1/2}\right) \E [\Phi ^2]^{1/2}\\\label{temp_tight_Z_1}&&\hspace{12mm} + 
\sum ^{[nu] - 1}_{k = [nt]}\left |\E [\xi ^n_kH^{n, M, i}_k(X^{n, M})
(\hat{Z}^{n, M, i}_{k/n} - \hat{Z}^{n, M, i}_{[nt]/n})\Phi ]\right |\Bigg\} 
\end{eqnarray}
for some positive constant $C'$. 
By [A2] and Proposition \ref {prop_moment_eps}, 
we see that 
\begin{eqnarray}\label{temp_tight_Z_2}
\E [(H^{n, M, i}_k)^4] \leq \frac{C''_{M, T}}{n^2}, \ \ k \leq [nu] - 1 
\end{eqnarray}
for some $C''_{M, T} > 0$. 
Moreover, [A3] implies 
\begin{eqnarray*}
\E [\xi ^n_k\tilde{g}^{n, i}_k(X^{n, M})
(\hat{Z}^{n, M, i}_{k/n} - \hat{Z}^{n, M, i}_{[nt]/n})\Phi ] = 0, 
\end{eqnarray*}
and hence 
\begin{eqnarray}\nonumber 
&&\left |\E [\xi ^n_kH^{n, M, i}_k(X^{n, M})
(\hat{Z}^{n, M, i}_{k/n} - \hat{Z}^{n, M, i}_{[nt]/n})\Phi ]\right |\\\nonumber 
&\leq & 
\E \left [ \left| \frac{1}{n}\psi _M(X^{n, M}_{k/n})h^{n, i}_k(X^{n, M}) + 
\tilde{\varepsilon }^{n, M, i}_k\right|^4\right] ^{1/4}
\E [\sup _{0\leq k\leq [nu] - 1}
(\hat{Z}^{n, M, i}_{k/n} - \hat{Z}^{n, M, i}_{[nt]/n})^4]^{1/4}\E [\Phi ^2]^{1/2}\\\label{temp_tight_Z_3}
&\leq & 
C'''_{M, T}\left( \frac{1}{n} + \E [(\tilde{\varepsilon }^{n, M, i}_k)^4]^{1/4}\right) 
\E [\sup _{0\leq r\leq T}|Z^{n, M, i}_r|^4]^{1/4}\E [\Phi ^2]^{1/2} , \ \ k\leq [nu] - 1 
\end{eqnarray}
for some $C'''_{M, T} > 0$. 
The inequalities (\ref {temp_tight_Z_1})--(\ref {temp_tight_Z_3}) and 
Propositions \ref {prop_moment_eps}--\ref {prop_bdd_Z} imply 
\begin{eqnarray*}
\E [|\hat{Z}^{n, M, i}_u - \hat{Z}^{n, M, i}_t|^2\Phi ] \leq  
C''''_{M, T}\times \frac{[nu] - [nt] + 1}{n}\E [\Phi ^2]^{1/2} 
\end{eqnarray*}
for some $C''''_{M, T} > 0$. 
Replacing $\Phi $ with $1$ and performing the same calculation, we have 
\begin{eqnarray}\label{temp_tight_2_1}
\E [|\hat{Z}^{n, M, i}_t - \hat{Z}^{n, M, i}_s|^2]\leq C''''_{M, T}\times \frac{[nt] - [ns] + 1}{n}, 
\end{eqnarray}
hence 
\begin{eqnarray}\label{temp_tight_2_2}
\E [|\hat{Z}^{n, M, i}_u - \hat{Z}^{n, M, i}_t|^2|\hat{Z}^{n, M, i}_t - \hat{Z}^{n, M, i}_s|]\leq 
(C''''_{M, T})^{3/2}\left( \frac{[nu] - [ns] + 1}{n} \right) ^{3/2}. 
\end{eqnarray}
The inequality (\ref {temp_tight_2_1}) immediately leads to (\ref {temp_tight_1_2}). 
The inequality (\ref {temp_tight_1_1}) is now obtained by (\ref {temp_tight_2_2}) and 
the same argument as in the proof of Theorem 14.1 in \cite {Billingsley}. 
\end{proof}

As a consequence of Propositions \ref {prop_discrete_diff_l} and \ref {prop_tight}, 
we can see the tightness of the processes with fixed $M$. 

\begin{prop} \label {prop_tight2}
A family of processes $(X^{n, M}, Y^{n, M}, Z^{n, M}, L^{n, M}, \hat{L}^{n, M})_n$ is tight on 
$\mathcal {C}^{2(1 + N + N_1)}$. 
\end{prop}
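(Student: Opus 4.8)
The plan is to establish \emph{marginal} tightness of each of the five blocks $X^{n,M}$, $Y^{n,M}$, $Z^{n,M}$, $L^{n,M}$, $\hat L^{n,M}$ and then invoke the elementary fact that a finite product of tight families is tight on the product space: if $K_1,K_2$ are compact with $\sup_nP(U^n\notin K_1)<\varepsilon/2$ and $\sup_nP(V^n\notin K_2)<\varepsilon/2$, then $K_1\times K_2$ is compact and $\sup_nP((U^n,V^n)\notin K_1\times K_2)<\varepsilon$. Throughout I record that, by [A1], $\delta_0\le\alpha^{n,i}_k\le K_0$, so $\bar\alpha^n_k=\sum_{i=1}^N\alpha^{n,i}_k\in[N\delta_0,NK_0]$ and (since $N_1<N$ forces $N\ge2$) $\bar\alpha^n_k-\alpha^{n,j}_k\ge(N-1)\delta_0>0$; hence, evaluated along $X^{n,M}$, the piecewise-constant processes $1/\bar\alpha^n_{[nr]}$, $\alpha^{n,i}_{[nr]}/\bar\alpha^n_{[nr]}$, $\tilde\alpha^{n,i}_{[nr]}=1/(\bar\alpha^n_{[nr]}-\alpha^{n,i}_{[nr]})$, $Q^{n,ij}_{[nr]}$ and $\bar\alpha^n_{[nr]}\tilde\alpha^{n,i}_{[nr]}$ are all uniformly bounded and adapted.

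First I would take $\xi^n_k\equiv1$ in Proposition~\ref{prop_tight}; since then $\hat Z^{n,M,i}=Z^{n,M,i}$, this gives tightness of each $Z^{n,M,i}$ on $\mathcal{C}$, hence of $Z^{n,M}$ on $\mathcal{C}^N$. By (\ref{def_Yn1})--(\ref{def_Yn2}), each component $Y^{n,M,0}$ (resp. $Y^{n,M,i}$) equals $x_0$ (resp. $\Phi^i$) plus a finite sum of processes $\int_0^\cdot\xi^n_{[nr]}\,dZ^{n,M,j}_r$ whose integrands $\xi^n$ are among the bounded adapted coefficients listed above; each such integral is tight by Proposition~\ref{prop_tight}, and a finite sum of tight processes is tight, so $Y^{n,M}$ is tight on $\mathcal{C}^{1+N}$.

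Next, both $Y^{n,M}$ and $L^{n,M}$ are piecewise linear on the grid $\{k/n\}_k$, so for $0\le s\le t$ the oscillation of $L^{n,M}$ on $[s,t]$ is controlled, up to the $O(1/n)$ grid rounding, by $\max_{l\le m\le k}|L^{n,M}_{m/n}-L^{n,M}_{l/n}|$ with $l/n\le s$ and $t\le k/n$; Proposition~\ref{prop_discrete_diff_l} bounds the latter by $\hat K^{1/2}$ times the corresponding oscillation of $Y^{n,M}$. Since $L^{n,M}_0=0$ and, $Y^{n,M}$ being tight, $\sup_nP(\sup_{|u-v|\le\delta,\,u,v\le T}|Y^{n,M}_u-Y^{n,M}_v|>\eta)\to0$ as $\delta\downarrow0$ for each $T,\eta>0$ (the modulus-of-continuity characterization of tightness on $\mathcal{C}$, cf.\ \cite{Billingsley}; the finitely many small $n$ are handled one at a time), the same modulus estimate transfers to $L^{n,M}$, whence $L^{n,M}$ is tight on $\mathcal{C}^{N_1}$. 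Then (\ref{SP_like_X}) gives $X^{n,M}_t=Y^{n,M,0}_t+\sum_{i=1}^{N_1}\int_0^t\tilde\alpha^{n,i}_{[nr]}(X^{n,M})\,dL^{n,M,i}_r$; because $L^{n,M,i}$ is non-decreasing and $\tilde\alpha^{n,i}$ bounded, $|\int_s^t\tilde\alpha^{n,i}_{[nr]}\,dL^{n,M,i}_r|\le C\,(L^{n,M,i}_t-L^{n,M,i}_s)$, so this integral inherits tightness from $L^{n,M}$ and $X^{n,M}$ is tight on $\mathcal{C}$. Finally, $\eta^{n,M,i}_k=(1-\alpha^{n,i}_k(X^{n,M})/\bar\alpha^n_k(X^{n,M}))\hat\eta^{n,M,i}_k$ yields $d\hat L^{n,M,i}_r=\bar\alpha^n_{[nr]}(X^{n,M})\tilde\alpha^{n,i}_{[nr]}(X^{n,M})\,dL^{n,M,i}_r$ with a bounded factor, so $|\hat L^{n,M,i}_t-\hat L^{n,M,i}_s|\le C\,(L^{n,M,i}_t-L^{n,M,i}_s)$ and $\hat L^{n,M}$ is tight on $\mathcal{C}^{N_1}$. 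Combining these with the product fact proves tightness of $(X^{n,M},Y^{n,M},Z^{n,M},L^{n,M},\hat L^{n,M})_n$ on $\mathcal{C}^{2(1+N+N_1)}$.

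The one genuinely delicate point is the transfer in the previous paragraph, namely passing from tightness of $Y^{n,M}$ to that of $L^{n,M}$ through Proposition~\ref{prop_discrete_diff_l}: one must phrase tightness via the oscillation modulus on $\mathcal{C}$, use the piecewise-linearity of both processes to reduce to the grid points at which Proposition~\ref{prop_discrete_diff_l} applies, and absorb the $O(1/n)$ rounding uniformly in $n$. Everything else is routine once one notes that all structural coefficients are bounded by virtue of [A1] and that the regulators $L^{n,M,i}$ are non-decreasing, so integration against them never enlarges the modulus of continuity; in particular Propositions~\ref{prop_bdd_Z} and~\ref{prop_bdd_L} are not needed here, only the already-established tightness statements.
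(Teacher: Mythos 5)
Your proof is correct and follows essentially the same route as the paper: tightness of $Z^{n,M}$ and $Y^{n,M}$ from Proposition~\ref{prop_tight}, transfer to $L^{n,M}$ via the oscillation bound of Proposition~\ref{prop_discrete_diff_l} phrased through the modulus of continuity, and then to $\hat L^{n,M}$ and $X^{n,M}$ through the structural identities with coefficients bounded by [A1]. The only difference is in handling the interpolation at the interval endpoints: you enlarge the window to $[\,[ns]/n,([nt]+1)/n]$ and use monotonicity of $L^{n,M,i}$, whereas the paper splits off the increments $\hat\eta^{n,M,i}_{[ns]},\hat\eta^{n,M,i}_{[nt]}$ and controls $\max_k\eta^{n,M,i}_k$ by Chebyshev together with Proposition~\ref{prop_moment_eta} --- a purely technical variation.
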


\begin{proof} 
The tightness of $(Y^{n, M}, Z^{n, M})$ is obtained directly from Proposition \ref {prop_tight}. 
Then Theorem 7.3 in \cite {Billingsley} implies 
\begin{eqnarray}\label{temp_tight2_1_1}
\lim _{\delta \rightarrow 0}\limsup _{n\rightarrow \infty }
P(w_T(\delta  ; Y^{n, M, i}) \geq \varepsilon ') = 0, \ \ i = 0, \ldots , N 
\end{eqnarray}
for every $\varepsilon ' > 0$ and $T > 0$, where $w_T(\delta  ; x)$ stands for a modulus of continuity,
i.e., $w_T(\delta  ; x) = \sup _{0\leq s<t\leq T, |t-s| \leq  \delta }|x(t) - x(s)|$. 

For a while, we set $n$ sufficiently large so that $1/n < \delta $. 
Let $0\leq s < t\leq T$ be such that $|t-s|\leq \delta $. 
By [A1], we get 
\begin{eqnarray}\nonumber 
|\hat{L}^{n, M, i}_t - \hat{L}^{n, M, i}_s| &\leq & 
\hat{\eta }^{n, M, i}_{[nt]} + \hat{\eta }^{n, M, i}_{[ns]} + 
\sum ^{[nt]-1}_{k = [ns] + 1}
\left( 1 + \frac{\alpha ^{n, i}_k(X^{n, M})}{\bar{\alpha }^n_k(X^{n, M}) - \alpha ^{n, i}_k(X^{n, M})}\right) 
\eta ^{n, M, i}_k\\\label{temp_tight2_1_2}
&\leq & 
2\max _{0\leq k\leq [nT]}\hat{\eta }^{n, M, i}_k + \left( 1 + \frac{K_0}{(N-1)\delta _0}\right) 
w_T(\delta ; L^{n, M, i}). 
\end{eqnarray}
Similarly, Proposition \ref {prop_discrete_diff_l} implies 
\begin{eqnarray}\label{temp_tight2_1_3}
|L^{n, M, i}_t - L^{n, M, i}_s| \leq 
2\max _{0\leq k\leq [nT]}\eta ^{n, M, i}_k + 
\sqrt{\hat{K}}\sum ^{N_1}_{j = 1}w_T(\delta ; Y^{n, M, j}). 
\end{eqnarray}
By (\ref {temp_tight2_1_3}), Proposition \ref {prop_moment_eta}, and the Chebyshev inequality, it follows that 
\begin{eqnarray*}
P(w_T(\delta ; L^{n, M, i}) \geq \varepsilon ) &\leq & 
P\left (\sqrt{K_0}\sum ^{N_1}_{j = 1}w_T(\delta  ; Y^{n, M, j}) \geq \varepsilon /2\right ) + 
P(2\max _{0\leq k\leq [nT]}\eta ^{n, M, i}_k\geq \varepsilon /2)\\
&\leq & 
\sum ^{N_1}_{j = 1}P\left (w_T(\delta  ; Y^{n, M, j}) \geq \frac{\varepsilon }{2\sqrt{K_0}N_1}\right ) + 
\frac{256}{\varepsilon ^4}\sum ^{[nT]}_{k = 0}\E [(\eta ^{n, M, i}_k)^4]\\
&\leq & 
\sum ^{N_1}_{j = 1}P\left (w_T(\delta  ; Y^{n, M, j}) \geq \frac{\varepsilon }{2\sqrt{K_0}N_1}\right ) + 
\frac{256C_M([nT]+1)}{\varepsilon ^4n^2}
\end{eqnarray*}
for any $\varepsilon > 0$. 
Taking $\limsup _n$, letting $\delta \rightarrow 0$, and applying (\ref {temp_tight2_1_1}), we get 
\begin{eqnarray}\label{temp_tight2_2_1}
\lim _{\delta \rightarrow 0}\limsup _{n\rightarrow \infty }P(w_T(\delta  ; \hat{L}^{n, M, i}) \geq \varepsilon ) = 0. 
\end{eqnarray}
Similarly, (\ref {temp_tight2_1_3}), (\ref {temp_tight2_2_1}), and Proposition \ref {prop_moment_eta} imply 
\begin{eqnarray}\label{temp_tight2_2_2}
\lim _{\delta \rightarrow 0}\limsup _{n\rightarrow \infty }P(w_T(\delta  ; L^{n, M, i}) \geq \varepsilon ) = 0 
\end{eqnarray}
for any $\varepsilon > 0$. 
Furthermore, the inequality 
\begin{eqnarray*}
|X^{n, M}_t - X^{n, M}_s| \leq |Y^{n, M, 0}_t - Y^{n, M, 0}_s| + 
\frac{1}{(N-1)\delta _0}\sum ^{N_1}_{i = 1}|L^{n, M, i}_t - L^{n, M, i}_s|
\end{eqnarray*}
gives 
\begin{eqnarray}\label{temp_tight2_2_3}
\lim _{\delta \rightarrow 0}\limsup _{n\rightarrow \infty }P(w_T(\delta  ; X^{n, M}) \geq \varepsilon ) = 0. 
\end{eqnarray}
Our assertion is obtained from (\ref {temp_tight2_2_1})--(\ref {temp_tight2_2_3}) and the fact that 
the initial values $X^{n, M}_0$, $L^{n, M, i}_0, \allowbreak \hat{L}^{n, M, i}_0$ are all constants. 
\end{proof}

Proposition \ref {prop_tight2} tells us that 
for any sequence $(n_k)_k\subset \Bbb {N}$ there is a subsequence 
$(n_{k_l})_l\subset (n_k)_k$ such that 
$(X^{n_{k_l}, M}, Y^{n_{k_l}, M}, Z^{n_{k_l}, M}, L^{n_{k_l}, M}, \hat{L}^{n_{k_l}, M})$ 
converges weakly to a certain 
continuous process $(X^M, Y^M, Z^M, L^M, \hat{L}^M)$ defined on some probability space 
$(\Omega ^M, \mathcal {F}^M, P^M)$ 
on $\mathcal {C}^{2(1+ N + N_1)}$ as $l\rightarrow \infty $. 
Furthermore, since [A1] and [A4] imply that the convergences 
\begin{eqnarray*}
\tilde{\alpha }^{n, i}_{[nt]}(X^{n, M}) \longrightarrow \tilde {\alpha }^i(t, X^M), \ \ 
Q^{n, ij}_{[nt]}(X^{n, M}) \longrightarrow Q^{ij}(t, X^M), \ \ n\rightarrow \infty 
\end{eqnarray*}
are uniform in $t\in [0, T]$ for all $T > 0$, 
using Proposition \ref {prop_bdd_L}, 
Theorem 2.2 in \cite {Kurtz-Protter}, and the continuous mapping theorem 
(Theorem 2.7 in \cite {Billingsley}), 
we obtain the weak convergence of  
\begin{eqnarray}\nonumber 
&&\left( X^{n_{k_l}, M}, \varphi ^{n_{k_l}, M}, Y^{n_{k_l}, M}, 
Z^{n_{k_l}, M}, L^{n_{k_l}, M}, \tilde{I}^{n_{k_l}, M}, \tilde{J}^{n_{k_l}, M} \right) \\\label{convergence_weakly}
&&\longrightarrow 
\left(X^M, \varphi ^M, Y^M, Z^M, L^M, \tilde{I}^M, \tilde{J}^M\right), \ \ l\rightarrow \infty , 
\end{eqnarray}
where 
\begin{eqnarray*}
&&\tilde{I}^{n, M, i}_t = 
\int ^t_0\tilde {\alpha }^{n, i}_{[nr]}(X^{n, M})dL^{n, M, i}_r, \ \ 
\tilde{J}^{n, M, i}_t = 
\int ^t_0Q^{n, ij}_{[nt]}(X^{n, M})dL^{n, M, i}_r, \\
&&\tilde{I}^{M, i}_t = 
\int ^t_0\tilde {\alpha }^i(r, X^M)dL^{M, i}_r, \ \ 
\tilde{J}^{M, i}_t = 
\int ^t_0Q^{ij}(t, X^M)dL^{M, i}_r
\end{eqnarray*}
and 
\begin{eqnarray}\label{eq_phi_M}
\varphi ^{M, i}_t = Y^{M, i}_t + 1_{I_1}(i)L^{M,i}_t - \sum ^{N_1}_{j = 1}\tilde{J}^{M, j}_t. 
\end{eqnarray}
Note that (\ref {SP_like_X}) and (\ref {convergence_weakly}) tell us
\begin{eqnarray}\label{eq_X_M}
X^M_t = Y^{M, 0}_t + \tilde{I}^{M, i}_t. 
\end{eqnarray}
Let us introduce a filtration on $(\Omega ^M, \mathcal {F}^M, P^M)$. 
We define $\mathcal {G}^M_t = \sigma (X^M_r, Z^M_r, L^M_r ; r\leq t)$ and 
let $(\mathcal {F}^M_t)_t$ be an enlarged filtration of $(\mathcal {G}^M_t)_t$ such that 
$(\Omega ^M, \mathcal {F}^M, (\mathcal {F}^M_t)_t, P^M)$ satisfies the usual condition. 
We notice that the processes $\varphi ^M$, $Y^M$ and 
$\hat{L}^M$ are also $(\mathcal {F}^M_t)_t$-adapted. 
Now let us define 
\begin{eqnarray}\label{def_N}
N^{M, i}_t &=& Z^{M, i}_t - \int ^t_0\tilde{\beta }^{M, i}(r, X^M)dr, \\\nonumber 
\tilde{N}^{M, ij}_t &=& N^{M, i}_tN^{M, j}_t - \int ^t_0\psi _M(X^M_r)^2a^{ij}(r, X^M)dr, 
\end{eqnarray}
where $\tilde{\beta }^{M, i}(t, w) = \psi _M(w(t))\beta ^i(t, w) + \psi _M(w(t))^3\tilde{\gamma }^i(t, w)$. 

\begin{prop} \ \label{prop_martingale}
For each $i, j = 1, \ldots , N$, the processes $(N^{M, i}_t)_t$ and $(\tilde{N}^{M, ij}_t)_t$ are both $(\mathcal {F}^M_t)_t$-martingales. 
\end{prop}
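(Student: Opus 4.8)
The plan is to show that $(N^{M,i}_t)_t$ and $(\tilde N^{M,ij}_t)_t$ arise as weak limits of discrete-time martingales, so that their martingale property is inherited. First I would introduce the discrete-time analogues
\begin{eqnarray*}
N^{n,M,i}_t &=& Z^{n,M,i}_t - \int^t_0 \tilde\beta^{n,M,i}_{[nr]}(X^{n,M})dr, \\
\tilde N^{n,M,ij}_t &=& N^{n,M,i}_t N^{n,M,j}_t - \int^t_0 \psi_M(X^{n,M}_{[nr]/n})^2 a^{n,ij}_{[nr]}(X^{n,M})dr,
\end{eqnarray*}
where $\tilde\beta^{n,M,i}_k(w)$ is the natural discretization of $\tilde\beta^{M,i}$ built from $\psi_M$, $\beta^{n,i}_k=\E^n[\bar g^{n,i}_k]$ and the second-order correction term involving $\gamma^{n,i}_k$ and $a^{n,kl}_k$. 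Recalling that $Z^{n,M,i}_t=\sum_{k<[nt]}H^{n,M,i}_k+(nt-[nt])H^{n,M,i}_{[nt]}$ and that $H^{n,M,i}_k=\psi_M(X^{n,M}_{k/n})\{n^{-1/2}\tilde g^{n,i}_k+n^{-1}h^{n,i}_k\}+\tilde\varepsilon^{n,M,i}_k$, I would compute $\E[H^{n,M,i}_k\mid \mathcal G^n_k]$ using [A3] (which makes $\tilde g^{n,i}_k,\bar g^{n,i}_k$ independent across $k$ hence $\mathcal G^n_k$-conditionally equal to their unconditional means). The mean-zero property of $\tilde g^{n,i}_k$ kills the $n^{-1/2}$ term, while $\E[h^{n,i}_k]=\beta^{n,i}_k+\psi_M(\cdot)^2(\gamma^{n,i}_k/2\bar\alpha^n_k)\sum_{j,m}a^{n,jm}_k$ reproduces exactly $n\cdot$ (the $k$-th increment of $\int\tilde\beta^{n,M,i}$), up to an error coming from $\tilde\varepsilon^{n,M,i}_k$. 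Thus $(N^{n,M,i}_{k/n})_k$ is a $(\mathcal G^n_k)_k$-martingale modulo a remainder whose total variation over $[0,T]$ is controlled by Proposition \ref{prop_moment_eps} (which gives $\E[(\sum_k|\tilde\varepsilon^{n,M,i}_k|)^4]\leq C_{M,T}/n$) and hence vanishes in $L^1$ as $n\to\infty$. A parallel but slightly longer computation, expanding $(N^{n,M,i}_{(k+1)/n})(N^{n,M,j}_{(k+1)/n})-(N^{n,M,i}_{k/n})(N^{n,M,j}_{k/n})$, using $\E[\tilde g^{n,i}_k\tilde g^{n,j}_k]=a^{n,ij}_k$ and again [A3], shows $(\tilde N^{n,M,ij}_{k/n})_k$ is a $(\mathcal G^n_k)$-martingale up to a remainder that is $o(1)$ in $L^1$ on compacts, using the moment bounds of Propositions \ref{prop_moment_diff_X}, \ref{prop_moment_eta}, \ref{prop_bdd_L}, \ref{prop_moment_eps}, \ref{prop_bdd_Z}.

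Next I would pass to the limit. By the weak convergence (\ref{convergence_weakly}) along the subsequence $(n_{k_l})$, together with the Skorokhod representation theorem, I may assume all processes converge almost surely in $\mathcal C$. The map $w\mapsto\int^t_0\tilde\beta^{M,i}(r,w)dr$ is continuous on $\mathcal C$ by [A1], [A4] and dominated convergence (using the linear-growth/local-boundedness afforded by $\psi_M$), and the discrete drift integrals $\int^t_0\tilde\beta^{n,M,i}_{[nr]}(X^{n,M})dr$ converge to $\int^t_0\tilde\beta^{M,i}(r,X^M)dr$ uniformly on $[0,T]$ by [A4]; hence $N^{n_{k_l},M,i}\to N^{M,i}$ and $\tilde N^{n_{k_l},M,ij}\to\tilde N^{M,ij}$ locally uniformly, almost surely. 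To upgrade to the martingale property in the limit I would use the standard criterion: for any $0\le s<t$, any bounded continuous $\Psi:\mathcal C([0,s];\mathbb R^{?})\to\mathbb R$ that is $\mathcal G^M_s$-measurable (i.e. a continuous functional of $(X^M_r,Z^M_r,L^M_r)_{r\le s}$), one has
\begin{eqnarray*}
\E^M[(N^{M,i}_t-N^{M,i}_s)\Psi]=\lim_l \E[(N^{n_{k_l},M,i}_t-N^{n_{k_l},M,i}_s)\Psi((X^{n_{k_l},M}_r,Z^{n_{k_l},M}_r,L^{n_{k_l},M}_r)_{r\le s})]=0,
\end{eqnarray*}
where the first equality uses a.s. convergence plus uniform integrability (from the uniform $L^4$ bounds of Propositions \ref{prop_bdd_L}, \ref{prop_bdd_Z} and [A2]), and the second uses that $N^{n,M,i}_{[nt]/n}$ is a $(\mathcal G^n_k)$-martingale up to an $L^1$-negligible remainder and that $\Psi$ evaluated at the discrete paths is $\mathcal G^n_{[ns]}$-measurable for $n$ large (more precisely, approximate $\Psi$ by a functional of finitely many coordinates $r_1<\dots<r_m\le s$, and note $X^{n,M}_{r}$, $Z^{n,M}_{r}$, $L^{n,M}_{r}$ with $r\le s$ are $\mathcal G^n_{[ns]+1}$-measurable). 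The same argument applied to $\tilde N^{M,ij}$ gives its martingale property. Finally, since $(\mathcal F^M_t)_t$ is the usual augmentation of $(\mathcal G^M_t)_t$, a martingale w.r.t. $(\mathcal G^M_t)$ remains one w.r.t. $(\mathcal F^M_t)$, and $\varphi^M$, $Y^M$, $\hat L^M$ being $(\mathcal F^M_t)$-adapted (as noted before the proposition) causes no issue.

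The main obstacle I anticipate is the bookkeeping for the remainder terms: showing that the discrete-time processes are martingales \emph{exactly} would be false because of the Taylor remainder $\varepsilon^{n,M,i}_k$ and the quadratic-variation correction, so one must carry an explicit error process and prove it tends to zero in $L^1$ (or in probability, uniformly on compacts) — this is precisely where Propositions \ref{prop_moment_diff_X}, \ref{prop_moment_eta}, \ref{prop_moment_eps} and the $L^4$-bounds of Propositions \ref{prop_bdd_L}, \ref{prop_bdd_Z} are used, and care is needed because $N^{n,M,i}_t N^{n,M,j}_t$ is quadratic so controlling its increments needs fourth moments of $Z^{n,M}$ and the sums $\sum_k|\tilde\varepsilon^{n,M,i}_k|$. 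A secondary subtlety is the measurability alignment between the continuous-time test functionals generating $\mathcal G^M_s$ and the discrete filtration $(\mathcal G^n_k)$: since the limiting functionals depend on $X^M,Z^M,L^M$ up to time $s$ while the discrete martingale property is w.r.t. the $\sigma$-algebra generated by the noise increments $\bar g^{n,i}_l,\tilde g^{n,i}_l$ for $l\le k-1$, one needs the (already established) fact that $X^{n,M}_{k/n},Z^{n,M}_{k/n},L^{n,M}_{k/n}$ are functions of these increments up to index $k$, so that evaluation at mesh points $\le s$ is $\mathcal G^n_{[ns]+1}$-measurable; the extra "$+1$" is harmless in the limit.
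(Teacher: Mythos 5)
Your proposal follows essentially the same route as the paper: identify discrete-time (approximate) martingales underlying $Z^{n,M,i}$, control the Taylor/quadratic-variation remainders via Propositions \ref{prop_moment_diff_X}--\ref{prop_bdd_Z}, and transfer the martingale property to the limit through the test-functional criterion $\E[(N_t-N_s)\Phi((X_r,Z_r,L_r)_{r\le s})]=0$ together with uniform integrability. The only cosmetic differences are that the paper defines $N^{n,M,i}$ as the exact martingale sum of the $\tilde g^{n,i}_k$ terms and shows $Z^{n,M,i}-N^{n,M,i}-\int\tilde\beta^{n,M,i}\to 0$ (which makes the quadratic computation for $\tilde N^{n,M,ij}$ exact rather than approximate), and that it works directly with weak convergence and dominated convergence instead of invoking the Skorokhod representation theorem.
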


\begin{proof} 
It suffices to show that 
$(N^{M, i}_t)_t$ and $(\tilde{N}^{M, ij}_t)_t$ are $(\mathcal {G}^M_t)_t$-martingales. 
Set 
\begin{eqnarray*}
&&N^{n, M, i}_t = \frac{1}{\sqrt{n}}\sum ^{[nt] - 1}_{k = 0}\tilde{g}^{n, i}_k(X^{n, M}) + 
\frac{nt-[nt]}{\sqrt{n}}\psi _M(X^{n, M}_{k/n})\tilde{g}^{n, i}_{[nt]}(X^{n, M}), \\
&&\bar{h}^{n, M, i}_k(w) = \E [h^{n, M, i}_k(w)], \ \ 
\tilde{\beta }^{n, M, i}_k(w) = \psi _M(w(k/n))\bar{h}^{n, M, i}_k(w). 
\end{eqnarray*}
Then we have 
\begin{eqnarray*}
&&\E \left[\sup _{0\leq t\leq T}\left| Z^{n, M, i}_t - N^{n, M, i}_t - 
\int ^t_0\tilde{\beta }^{n, M, i}_{[nr]}(X^{n, M})dr\right|^2 \right ]\\
&&\leq 
\frac{2}{n^2}\E \left[\max _{0\leq k\leq [nT]+1}
\left( \tilde{H}^{n, M, i}_k\right)^2 \right] + 
2\E [|\sum ^{[nt]}_{k = 0}\tilde{\varepsilon }^{n, M, i}_k|^2], 
\end{eqnarray*}
where 
\begin{eqnarray*}
\tilde{H}^{n, M, i}_k = \sum ^{k-1}_{l = 0}\psi _M(X^{n, M}_{k/n})(h^{n, M, i}_k(X^{n, M}) - 
\bar{h}^{n, M, i}_k(X^{n, M})). 
\end{eqnarray*}
Then the same calculation as (\ref {temp_diff_L_3}) and Proposition \ref {prop_moment_eps} leads us to 
\begin{eqnarray}\label{temp_mart_1_1}
\E \left[\sup _{0\leq t\leq T}\left| Z^{n, M, i}_t - N^{n, M, i}_t - 
\int ^t_0\tilde{\beta }^{n, M, i}_{[nr]}(X^{n, M})dr\right|^2 \right ] \ \longrightarrow \ 0, \ \ 
n\rightarrow \infty . 
\end{eqnarray}
Moreover [A4] implies 
$\lim _{n\rightarrow \infty }\tilde{\beta }^{n, M, i}_{[nt]}(w) = \tilde{\beta }^{M, i}(t, w)$ 
uniformly on any compact subset of $\mathcal {C}$ for all $t\geq 0$. 
Thus, using (\ref {convergence_weakly}), we get 
\begin{eqnarray}\nonumber 
&&\left( X^{n_{k_l}, M}, Z^{n_{k_l}, M}, L^{n_{k_l}, M}, \left( \int ^\cdot _0\tilde{\beta }^{n, M, i}_{[nr]}(X^{n, M})dr\right)_i \right) \\\label{temp_mart_1_2}
&&\longrightarrow 
\left( X^M, Z^M, L^M, \left( \int ^\cdot _0\tilde{\beta }^{M, i}(r, X^M)dr \right) _i \right) , \ \ l\rightarrow \infty 
\end{eqnarray}
weakly on $\mathcal {C}$. 
The convergences (\ref {temp_mart_1_1})--(\ref {temp_mart_1_2}) 
and the continuous mapping theorem imply that 
\begin{eqnarray}\label{temp_mart_1_3}
(X^{n_{k_l}, M}, Z^{n_{k_l}, M}, L^{n_{k_l}, M}, N^{n_{k_l}, M}) \ \longrightarrow \ 
(X^M, Z^M, L^M, N^M), \ \ l\rightarrow \infty \ \ 
\mathrm {weakly\ on\ } \mathcal {C}^{2N}. 
\end{eqnarray}
On the other hand, by [A3]--[A4], we have 
\begin{eqnarray*}
\E [(N^{n, M, i}_t - N^{n, M, i}_s)\Phi ((X^{n, M}_{s_l})^m_{l = 1}, (Z^{n, M}_{s_l})^m_{l = 1}, (L^{n, M}_{s_l})^m_{l = 1})] = 0
\end{eqnarray*}
and 
\begin{eqnarray*}
&&\E [(N^{n, M, i}_tN^{n, M, j}_t - N^{n, M, i}_sN^{n, M, j}_s)
\Phi ((X^{n, M}_{s_l})^m_{l = 1}, (Z^{n, M}_{s_l})^m_{l = 1}, (L^{n, M}_{s_l})^m_{l = 1})]\\
&=& 
\E [(N^{n, M, i}_t - N^{n, M, i}_s)(N^{n, M, j}_t - N^{n, M, j}_s)\Phi ]\\
&=& 
\int ^t_s\E [\psi _M(X^{n, M}_r)^2a^{n, ij}_{[nr]}(X^{n, M})\Phi ]dr\\&& - 
\frac{(nt-[nt])([nt]+1-nt)}{n}\E [\psi _M(X^{n, M}_{[nt]/n})a^{n, ij}_{[nt]}(X^{n, M})\Phi ]\\&& - 
\frac{(ns-[ns])([ns]+1-ns)}{n}\E [\psi _M(X^{n, M}_{[ns]/n})a^{n, ij}_{[ns]}(X^{n, M})\Phi ]
\end{eqnarray*}
for any $m\in \Bbb {N}$, $0\leq s_1 \leq \cdots \leq s < t$ and any bounded continous function 
$\Phi : \Bbb {R}^{(1+N+N_1)m}\longrightarrow \Bbb {R}$. 
Thus, using [A2], (\ref {temp_mart_1_3}), and the dominated convergence theorem, we obtain 
\begin{eqnarray}\label{temp_mart_N_1}
&&\E \hspace{0mm}^M[(N^{M, i}_t - N^{M, i}_s)\Phi ((X^M_{s_l})^m_{l = 1}, (Z^M_{s_l})^m_{l = 1}, (L^M_{s_l})^m_{l = 1})] = 0, \\\nonumber 
&&\E \hspace{0mm}^M[(N^{M, i}_tN^{M, j}_t - N^{M, i}_sN^{M, j}_s)\Phi ((X^M_{s_l})^m_{l = 1}, (Z^M_{s_l})^m_{l = 1}, (L^M_{s_l})^m_{l = 1})]\\\label{temp_mart_N_2}
&& = 
\E \hspace{0mm}^M[\int ^t_s\psi _M(X^M_r)^2a^{ij}(r, X^M)dr\Phi ((X^M_{s_l})^m_{l = 1}, (Z^M_{s_l})^m_{l = 1}, (L^M_{s_l})^m_{l = 1})] 
\end{eqnarray}
(where $\E \hspace{0mm}^M$ stands for the expectation under $P^M$), 
which imply our assertion. 
\end{proof}

By Proposition \ref {prop_martingale} and the martingale representation theorem 
(Theorem 3.4.2 in \cite {Karatzas-Shreve}), 
we can construct an enlarged filtered space 
$(\hat{\Omega }^M, \hat{\mathcal {F}}^M, (\hat{\mathcal {F}}^M_t)_t, \hat{P}^M)$ of 
$(\Omega ^M, \mathcal {F}^M, (\mathcal {F}^M_t)_t, P^M)$ and 
find an $N$-dimensional $(\hat{\mathcal {F}}^M_t)_t$-Brownian motion $(B^M_t)_t$ such that 
\begin{eqnarray}\label{mart_rep}
N^{M, i}_t = \sum ^N_{j = 1}\int ^t_0\psi _M(X^M_r)\sigma ^{ij}(r, X^M)dB^{M, j}_r, 
\end{eqnarray}
where the stochastic processes on 
$(\Omega ^M, \mathcal {F}^M, (\mathcal {F}^M_t)_t, P^M)$ 
are regarded as defined on \allowbreak 
$(\hat{\Omega }^M, \hat{\mathcal {F}}^M, \allowbreak (\hat{\mathcal {F}}^M_t)_t, \hat{P}^M)$ canonically. 
Moreover the process $(Z^{M, i}_t)_t$ becomes an $(\hat{\mathcal {F}}^M_t)_t$-semimartingale 
and we can define the stochastic integral
\begin{eqnarray*}
\int ^\cdot _0\xi _tdZ^{M, i}_t = \int ^\cdot _0\xi _t\tilde{\beta }^{M, i}(t, X^M)dt + 
\sum ^N_{j = 1}\int ^\cdot _0\xi _t\psi _M(X^M_t)\sigma ^{ij}(t, X^M)dB^{M, j}_t
\end{eqnarray*}
for an $(\hat{\mathcal {F}}^M_t)_t$-progressively measurable process $(\xi _t)_t$ 
(under suitable moment conditions).

\begin{prop} \ \label{prop_equal_Y_Z}The following equalities hold. 
\begin{eqnarray*}
Y^{M, 0}_t &=& x_0 + \sum ^N_{j = 1}\int ^t_0\frac{1}{\bar{\alpha }(r, X^M)}dZ^{M, j}_r, \\
Y^{M, i}_t &=& \Phi ^i + Z^{M, i}_t - \int ^t_0\frac{\alpha ^i(r, X^M)}{\bar{\alpha }(r, X^M)}dZ^{M, j}_r, \ \ 
i = 1, \ldots , N
\end{eqnarray*}
\end{prop}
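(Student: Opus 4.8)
The plan is to pass to the weak limit along the subsequence $(n_{k_l})_l$ in the exact discrete-time identities (\ref{def_Yn1})--(\ref{def_Yn2}), which read
\begin{eqnarray*}
Y^{n, M, 0}_t &=& x_0 + \sum ^N_{j=1}\int ^t_0\frac{1}{\bar{\alpha }^n_{[nr]}(X^{n, M})}dZ^{n, M, j}_r, \\
Y^{n, M, i}_t &=& \Phi ^i + Z^{n, M, i}_t - \sum ^N_{j = 1}\int ^t_0\frac{\alpha ^{n, i}_{[nr]}(X^{n, M})}{\bar{\alpha }^n_{[nr]}(X^{n, M})}dZ^{n, M, j}_r, \ \ i = 1, \ldots , N.
\end{eqnarray*}
Because (\ref{convergence_weakly}) already gives the joint weak convergence of $(X^{n_{k_l}, M}, Y^{n_{k_l}, M}, Z^{n_{k_l}, M}, L^{n_{k_l}, M}, \ldots )$ on $\mathcal{C}$, the only remaining task is to identify the weak limits of the two stochastic integrals on the right-hand sides; these limiting integrals are meaningful since $Z^{M, j}$ is an $(\hat{\mathcal{F}}^M_t)_t$-semimartingale, as established just after (\ref{mart_rep}). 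I would identify them by Theorem 2.2 of \cite{Kurtz-Protter} on the stability of stochastic integrals under weak convergence, used exactly as in the derivation of (\ref{convergence_weakly}); the two hypotheses to check are (i) the joint convergence of the integrand processes together with the integrators and (ii) the ``uniformly controlled variations'' property of the integrator sequence $(Z^{n, M, i})_n$.

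For (i): by [A1] one has $\delta _0\leq \alpha ^{n, i}_k(w)\leq K_0$, hence $\bar{\alpha }^n_k(w)\geq N\delta _0$, so the integrands $1/\bar{\alpha }^n_{[nr]}(X^{n, M})$ and $\alpha ^{n, i}_{[nr]}(X^{n, M})/\bar{\alpha }^n_{[nr]}(X^{n, M})$ are uniformly bounded; and by [A4] the convergence $\alpha ^{n, i}_{[nt]}(X^{n, M})\to \alpha ^i(t, X^M)$ is uniform on compact subsets of $[0, \infty )\times \mathcal{C}$, so $1/\bar{\alpha }^n_{[nr]}(X^{n, M})\to 1/\bar{\alpha }(r, X^M)$ and $\alpha ^{n, i}_{[nr]}(X^{n, M})/\bar{\alpha }^n_{[nr]}(X^{n, M})\to \alpha ^i(r, X^M)/\bar{\alpha }(r, X^M)$ jointly with the convergence in (\ref{convergence_weakly})---by exactly the reasoning already used there for $\tilde{\alpha }^{n, i}_{[nt]}(X^{n, M})$ and $Q^{n, ij}_{[nt]}(X^{n, M})$.

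For (ii): the proof of Proposition \ref{prop_martingale} gives the decomposition $Z^{n, M, i}_t = N^{n, M, i}_t + \int ^t_0\tilde{\beta }^{n, M, i}_{[nr]}(X^{n, M})dr + R^{n, M, i}_t$ with $(N^{n, M, i}_t)_t$ a martingale, $\E[\sup _{0\leq t\leq T}|R^{n, M, i}_t|^2]\to 0$ by (\ref{temp_mart_1_1}), and the drift term of total variation bounded uniformly in $n$ on each $[0, T]$ by [A1]--[A2]; together with the uniform moment bounds of Propositions \ref{prop_moment_eps} and \ref{prop_bdd_Z} and (\ref{temp_diff_L_3}) this yields the ``uniformly controlled variations'' condition for $(Z^{n, M, i})_n$. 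Applying Theorem 2.2 of \cite{Kurtz-Protter}, the continuous mapping theorem (Theorem 2.7 in \cite{Billingsley}), and (\ref{convergence_weakly}), one gets the weak convergence of $\int ^\cdot _0(\bar{\alpha }^n_{[nr]}(X^{n, M}))^{-1}dZ^{n, M, j}_r$ and $\int ^\cdot _0\alpha ^{n, i}_{[nr]}(X^{n, M})(\bar{\alpha }^n_{[nr]}(X^{n, M}))^{-1}dZ^{n, M, j}_r$, jointly with all the processes appearing in (\ref{convergence_weakly}), to $\int ^\cdot _0\bar{\alpha }(r, X^M)^{-1}dZ^{M, j}_r$ and $\int ^\cdot _0\alpha ^i(r, X^M)\bar{\alpha }(r, X^M)^{-1}dZ^{M, j}_r$. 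Passing to the limit in the two displayed identities then gives the asserted equalities for $Y^{M, 0}$ and $Y^{M, i}$ almost surely. The only genuinely delicate point is the verification of the ``uniformly controlled variations'' hypothesis and of the joint convergence of integrands with integrators that \cite{Kurtz-Protter} requires; but both follow at once from the estimates already in hand, so the argument is essentially bookkeeping around the Kurtz--Protter theorem.
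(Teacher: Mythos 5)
Your proposal is correct and follows essentially the same route as the paper: the paper likewise passes to the limit in the discrete identities (\ref{def_Yn1})--(\ref{def_Yn2}) via Theorem 2.2 of \cite{Kurtz-Protter} and the continuous mapping theorem, verifying the good-integrator hypothesis for $(Z^{n,M,i})_n$ through Proposition \ref{prop_moment_eps} and a uniform bound on $\frac{1}{n}\sum_{i}\sum_{k}\E [|\psi _M\tilde{g}^{n, i}_k|^2 + |\psi _M h^{n, M, i}_k|]$, which is the same content as your martingale-plus-drift decomposition check.
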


\begin{proof} 
By Proposition \ref {prop_moment_eps} and
\begin{eqnarray*}
\frac{1}{n}\sum ^N_{i = 1}\sum ^{[nt] - 1}_{k = 0}\E [|\psi _M(X^{n, M}_{k/n})\tilde{g}^{n, i}_k(X^{n, M})|^2 + 
|\psi _M(X^{n, M}_{k/n})h^{n, M, i}_k(X^{n, M})|] \leq C_{M, t}
\end{eqnarray*}
for some $C_{M, t} > 0$, we can apply Theorem 2.2 in \cite {Kurtz-Protter} to arrive at the weak convergence of
\begin{eqnarray}\nonumber 
&&\left( Y^{n_{k_l}, M}, Z^{n_{k_l}, M}, 
\left( \int ^\cdot _0\frac{1}{\bar{\alpha }^{n_{k_l}}_{[n_{k_l}r]}(X^{n_{k_l}, M})}dZ^{n_{k_l}, M, i}_r\right)_i, 
\left( \int ^\cdot _0\frac{\alpha ^{n_{k_l}, i}(X^{n_{k_l}, M})}{\bar{\alpha }^{n_{k_l}}_{[n_{k_l}r]}(X^{n_{k_l}, M})}dZ^{n_{k_l}, M, i}_r\right)_i 
\right) \\\label{temp_equal}
&&\longrightarrow 
\left( Y^M, Z^M, 
\left( \int ^\cdot _0\frac{1}{\bar{\alpha }(r, X^M)}dZ^{M, i}_r\right)_i, 
\left( \int ^\cdot _0\frac{\alpha ^i(r, X^M)}{\bar{\alpha }(r, X^M)}dZ^{M, i}_r\right)_i 
\right) , \ \ l\rightarrow \infty . 
\end{eqnarray}
Our assertion is now obtained using 
(\ref {def_Yn1})--(\ref {def_Yn2}), (\ref {temp_equal}), and the continuous mapping theorem. 
\end{proof}

By (\ref {convergence_weakly})--(\ref {def_N}), (\ref {mart_rep}),
and Proposition \ref {prop_martingale}--\ref {prop_equal_Y_Z}, 
we obtain the following proposition. 

\begin{prop} \ \label{prop_SDER_M}
The pair $(X^M, \varphi ^M)$ (and the regulator process $L^M$) is the solution of 
\begin{eqnarray}\nonumber 
dX^M_t &=& \hat{b}^{M, 0}(t, X^M)dt + \sum ^N_{j = 1}\hat{\sigma }^{M, 0j}(t, X^M)dB^{M, j}_t + 
\sum ^{N_1}_{j = 1}\tilde{\alpha }^j(t, X^M)dL^{M, j}_t , 
\ \ X^M_0 = x_0, \\\nonumber 
d\varphi ^{M, i}_t &=& \hat{b}^{M, i}(t, X^M)dt + \sum ^N_{j = 1}\hat{\sigma }^{M, ij}(t, X^M)dB^{M, j}_t + 
1_{I_1}(i)dL^{M, i}_t\\\label{SDER_M}&& - \sum ^{N_1}_{j = 1}Q^{ij}(t, X^M)dL^{M, j}_t, 
\ \ \varphi ^i_0 = \Phi ^i,\ \ i=1, \ldots , N, 
\end{eqnarray}
where $\hat{b}^M$ and $\hat{\sigma }^M$ are given by 
$(\ref {def_hat_b})$--$(\ref {def_hat_sigma})$ upon replacing $\tilde{\beta }^i$ 
and $\sigma ^{ij}$ by $\tilde{\beta }^{M, i}$ and $\sigma ^{M, ij}$. 
\end{prop}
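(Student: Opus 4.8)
The plan is to check that $(X^M,\varphi^M,L^M)$, on the filtered space $(\hat\Omega^M,\hat{\mathcal F}^M,(\hat{\mathcal F}^M_t)_t,\hat P^M)$ together with the Brownian motion $B^M$ already obtained from the martingale representation theorem, satisfies the three conditions in the definition of a solution of $(\ref{SDER_M})$: the path constraints $\varphi^M\in\mathcal C^N_+$ and $L^M\in\mathcal C^{N_1}_{\uparrow0}$, the complementarity relation $\int^\infty_0 1_{\{\varphi^{M,i}_r>0\}}\,dL^{M,i}_r=0$ for $i=1,\dots,N_1$, and the integral equations. Of these, the complementarity relation is the only substantial point; the integral equations come from reassembling identities already at hand, and the constraints pass to the weak limit because the relevant path spaces are closed.

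For the integral equations I would argue purely by substitution. Starting from $(\ref{eq_X_M})$ and $(\ref{eq_phi_M})$, use Proposition \ref{prop_equal_Y_Z} to express $Y^{M,0}$ and $Y^{M,i}$ as integrals against $Z^M$, then use $(\ref{def_N})$ together with the martingale representation $(\ref{mart_rep})$ to write
\[
dZ^{M,j}_r=\tilde\beta^{M,j}(r,X^M)\,dr+\sum^N_{k=1}\psi_M(X^M_r)\,\sigma^{jk}(r,X^M)\,dB^{M,k}_r .
\]
Collecting the $dr$-, $dB^{M,k}$- and $dL^{M,j}$-terms and comparing with $(\ref{def_hat_b})$--$(\ref{def_hat_sigma})$, where $\tilde\beta^i$ and $\sigma^{ij}$ are replaced by $\tilde\beta^{M,i}$ and $\sigma^{M,ij}(t,w):=\psi_M(w(t))\sigma^{ij}(t,w)$, reproduces $(\ref{SDER_M})$; the coefficients multiplying $dL^{M,j}$ are $\tilde\alpha^j(r,X^M)$ and $Q^{ij}(r,X^M)$, which is exactly how $\sum_i\tilde I^{M,i}$ and $\sum_j\tilde J^{M,j}$ were identified in the passage leading to $(\ref{convergence_weakly})$. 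This is routine bookkeeping.

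The discrete approximants satisfy $\varphi^{n,M,i}_t\ge0$ for all $t$ (the grid values are nonnegative by the $\max$ in $(\ref{def_e})$ and $\Phi^i\ge0$, and $(\ref{linear_interpolation_phi})$ preserves nonnegativity) and $L^{n,M,\cdot}$ is continuous, nondecreasing, and null at $0$; since $\mathcal C^N_+$ and $\mathcal C^{N_1}_{\uparrow0}$ are closed, $(\ref{convergence_weakly})$ and the portmanteau theorem give the constraints for $(\varphi^M,L^M)$ almost surely. The real work is complementarity, which fails at the discrete level because of the interpolation $(\ref{linear_interpolation_phi})$. The key point is a discrete complementarity: whenever $\eta^{n,M,i}_k>0$ one also has $\hat\eta^{n,M,i}_k>0$, and then from $\max\{\tilde e^{n,M,i}_k,-\varphi\}=\tilde e^{n,M,i}_k+(-\tilde e^{n,M,i}_k-\varphi)_+$ together with $(\ref{fluctuation_phi})$ it follows that $\varphi^{n,M,i}_{(k+1)/n}=0$. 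Since $dL^{n,M,i}$ charges only the subintervals on which $\eta^{n,M,i}_k>0$, and on each of those $\varphi^{n,M,i}$ runs linearly from $\varphi^{n,M,i}_{k/n}\ge0$ down to $0$,
\[
\int^t_0\varphi^{n,M,i}_r\,dL^{n,M,i}_r\le\frac12\Bigl(\max_{0\le k\le[nt]}\bigl|\varphi^{n,M,i}_{(k+1)/n}-\varphi^{n,M,i}_{k/n}\bigr|\Bigr)L^{n,M,i}_t .
\]
By the difference equation $(\ref{diff_phi})$, assumption [A2], and Propositions \ref{prop_moment_diff_X}--\ref{prop_moment_eta}, the maximal increment tends to $0$ in $L^p$ for any $p>2$, while $L^{n,M,i}_t$ is bounded in $L^8$ by Proposition \ref{prop_bdd_L}; hence $\int^t_0\varphi^{n,M,i}_r\,dL^{n,M,i}_r\to0$ in probability. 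On the other hand, passing to a further subsequence along which $(\ref{convergence_weakly})$ holds almost surely and uniformly on compacts (Skorokhod representation), the nondecreasing functions $L^{n,M,i}$ converge uniformly to $L^{M,i}$ and $\varphi^{n,M,i}\to\varphi^{M,i}$ uniformly, so $\int^t_0\varphi^{n,M,i}_r\,dL^{n,M,i}_r\to\int^t_0\varphi^{M,i}_r\,dL^{M,i}_r$. Comparing the two limits gives $\int^t_0\varphi^{M,i}_r\,dL^{M,i}_r=0$ for every $t$, hence for $t=\infty$, and since $\varphi^{M,i}\ge0$ and $L^{M,i}$ is nondecreasing this is equivalent to $\int^\infty_0 1_{\{\varphi^{M,i}_r>0\}}\,dL^{M,i}_r=0$, which completes the verification.

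I expect the complementarity step to be the main obstacle: one has to quantify precisely how far the linearly interpolated pair $(\varphi^{n,M},L^{n,M})$ is from a genuine solution of the oblique Skorokhod problem in the orthant and show this discrepancy vanishes in the limit. Everything else is either a passage to the limit through closed sets or an algebraic rearrangement of identities already established in the preceding propositions.
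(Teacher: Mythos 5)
Your proposal is correct and follows essentially the same route as the paper: the constraints and the integral equations are passed to the limit from the identities already established, and the substance is the complementarity condition, which you obtain exactly as the paper does, from the discrete complementarity $\varphi^{n,M,i}_{(k+1)/n}=0$ whenever $\eta^{n,M,i}_k>0$ (the paper phrases this as $\int^T_0\varphi^{n,M,i}_{([nr]+1)/n}\,dL^{n,M,i}_r=0$) together with a moment bound showing the interpolation discrepancy vanishes. The only cosmetic difference is that you pass to the limit in $\int\varphi^{n,M,i}\,dL^{n,M,i}$ via Skorokhod representation and uniform convergence of the nondecreasing integrators, where the paper invokes Theorem 2.2 of Kurtz--Protter.
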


\begin{proof} 
It is obvious that $(\varphi ^{M, i}_t)_t$ is non-negative, 
$(L^{M, j}_t)_t$ is non-decreasing, and $L^{M, j}_0 = 0$ for $i = 1, \ldots , N$ and 
$j = 1, \ldots , N_1$. 
The rest of the proof is to show 
\begin{eqnarray}\label{temp_comp_0}
\int ^\infty _0\varphi ^{M, i}_rdL^{M, i}_r = 0, \ \ i = 1, \ldots , N_1\ \ 
\mbox {almost surely.} 
\end{eqnarray}
By the definition of $L^{n, M}$, we have 
\begin{eqnarray}\label{temp_comp_1}
\int ^T_0\varphi ^{n, M, i}_{([nr]+1)/n}dL^{n, M, i}_r = 
\sum ^{[nT]-1}_{l = 0}\varphi ^{n, M, i}_{(l+1)/n}\eta ^{n, M, i}_l + 
(nT - [nT])\varphi ^{n, M, i}_{([nT]+1)/n}\eta ^{n, M, i}_{[nT]} = 0, \ \ T \geq 0. 
\end{eqnarray}
Propositions \ref {prop_moment_eta}--\ref {prop_bdd_L} imply 
\begin{eqnarray}\nonumber 
&&\E [\sup _{0\leq t\leq T}
| \int ^t_0(\varphi ^{n, M, i}_{([nr]+1)/n} - \varphi ^{n, M, i}_r)dL^{n, M, i}_r|]\\\nonumber 
&\leq & 
\E [(L^{n, M, i}_T)^2]^{1/2}\left( \sum ^N_{j = 1}\E [\max _{0\leq k\leq [nT]}(H^{n, M, j}_k)^2]^{1/2} + 
\sum ^{N_1}_{j = 1}\E [\max _{0\leq k\leq [nT]}(\hat{\eta }^{n, M, j}_k)^2]^{1/2} \right) \\\nonumber 
&\leq & 
\E [(L^{n, M, i}_T)^2]^{1/2}
\left\{  \sum ^N_{j = 1}\left( \sum ^{[nT]}_{k = 0}\E [(H^{n, M, j}_k)^4]\right) ^{1/4} + 
\sum ^{N_1}_{j = 1}\left( \sum ^{[nT]}_{k = 0}\E [(\hat{\eta }^{n, M, j}_k)^4]\right) ^{1/4} \right\} \\\label{temp_comp_2}
&\leq & 
\frac{C_{M, T}}{n^{1/4}}\ \longrightarrow \ 0, \ \ n\rightarrow \infty , \ \ T > 0 
\end{eqnarray}
for some $C_{M, T} > 0$. 
Using (\ref {temp_comp_1})--(\ref {temp_comp_2}) 
and Theorem 2.2 in \cite {Kurtz-Protter}, we obtain (\ref {temp_comp_0}). 
\end{proof}

Here, (\ref {eq_phi_M}) and (\ref {temp_comp_0}) imply that 
$((\varphi ^{M, i})^{N_1}_{i=1}, (L^{M, i})^{N_1}_{i=1})$ is a solution of 
the Skorokhod problem associated with $(Y^{M, i})^{N_1}_{i = 1}$ 
(for given $X^M$).
Then, applying the standard argument of the Skorokhod problem, we get 
\begin{eqnarray*}
L^{M, i}_t = \sup _{0\leq r\leq t}
\left( \sum ^{N_1}_{j = 1}\int ^t_0Q^{ij}(r, X^M)dL^{M, j}_r - Y^{M, i}_t \right)_+ , \ \ 
i = 1, \ldots , N_1. 
\end{eqnarray*}
Hence, similar to Proposition \ref {prop_discrete_diff_l}, 
the same arguments as in the proof of Theorem 2 of \cite {Shashiashvili} leads us to 
\begin{eqnarray}\label{diff_L_M}
\sum ^{N_1}_{i = 1}|L^{M, i}_t - L^{M, i}_s|^2 \leq \hat{K}\sup _{s\leq r\leq t}
\sum ^{N_1}_{i = 1}|Y^{M, i}_r - Y^{M, i}_s|^2, \ \ 0\leq s < t 
\end{eqnarray}
for some $\hat{K} > 0$ which depends only on $V$. 

\begin{prop} \ \label{prop_bdd_XM}$\sup _M\E \hspace{0mm}^M[\sup _{0\leq t\leq T}|X^M_t|^4] < \infty $ for all $T>0$. 
\end{prop}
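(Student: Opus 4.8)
The plan is to reduce this to a Gronwall argument for the limiting SDER of Proposition \ref {prop_SDER_M}, using that its coefficients have linear growth with constants that are independent of $M$, and to supply the a~priori finiteness needed to run Gronwall from the bound $\sup _{0\leq t\leq T}|X^M_t|\leq M$, which holds almost surely.

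First I would record uniform growth bounds. By [A1] one has $\alpha ^i(r,w)\in [\delta _0,K_0]$, hence $\bar{\alpha}(r,w)\in [N\delta _0,NK_0]$, so $1/\bar{\alpha}$, $\alpha ^i/\bar{\alpha}$, $\tilde{\alpha}^j$ and $Q^{ij}$ are bounded by constants independent of $M$ and $w$. Since $\tilde{\beta}^i=\beta ^i+\tilde{\gamma}^i$ and $0\leq \psi _M\leq 1$, [A6] gives $|\tilde{\beta}^{M,j}(r,w)|\leq C_T(1+|w|_r)$ and $|\psi _M(w(r))\sigma ^{ij}(r,w)|\leq |\sigma ^{ij}(r,w)|\leq C_T(1+|w|_r)$ for $0\leq r\leq T$, with $C_T$ not depending on $M$. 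I would then insert the semimartingale decomposition
\begin{eqnarray*}
Z^{M,j}_t=\int ^t_0\tilde{\beta}^{M,j}(r,X^M)\,dr+\sum ^N_{k=1}\int ^t_0\psi _M(X^M_r)\sigma ^{jk}(r,X^M)\,dB^{M,k}_r
\end{eqnarray*}
coming from (\ref {def_N}) and (\ref {mart_rep}) into the formulas of Proposition \ref {prop_equal_Y_Z}, so that each $Y^{M,i}_t$, $i=0,\ldots ,N$, becomes a deterministic initial value plus a Lebesgue integral with integrand bounded by $C_T(1+|X^M|_r)$ plus a finite sum of stochastic integrals against $B^M$ whose integrands are also bounded by $C_T(1+|X^M|_r)$.

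Next I would use (\ref {eq_X_M}), the boundedness of $\tilde{\alpha}^j$, and the monotonicity of $L^{M,j}$ to get $|X^M_t|\leq |Y^{M,0}_t|+C\sum ^{N_1}_{j=1}L^{M,j}_t$, together with (\ref {diff_L_M}) taken at $s=0$ (recall $Y^{M,i}_0=\Phi ^i$) to get $\sum ^{N_1}_{j=1}L^{M,j}_t\leq C(1+\sum ^{N_1}_{i=1}\sup _{0\leq r\leq t}|Y^{M,i}_r|)$, with $C$ depending only on $V$, $N$, $\delta _0$ and the $\Phi ^i$. Hence $\sup _{0\leq r\leq t}|X^M_r|\leq C_T(1+\sum ^N_{i=0}\sup _{0\leq r\leq t}|Y^{M,i}_r|)$, and feeding back the representation of $Y^{M,i}$ from the previous step yields
\begin{eqnarray*}
\sup _{0\leq r\leq t}|X^M_r|^4\leq C_T\Big(1+\Big(\int ^t_0(1+|X^M|_r)\,dr\Big)^4+\sum _k\sup _{0\leq s\leq t}\Big|\int ^s_0c^{M,k}_r\,dB^{M,k}_r\Big|^4\Big),
\end{eqnarray*}
where the sum over $k$ is finite, $|c^{M,k}_r|\leq C_T(1+|X^M|_r)$, and all constants are independent of $M$. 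Writing $\Psi _M(t)=\E \hspace{0mm}^M[\sup _{0\leq r\leq t}|X^M_r|^4]$, applying Jensen's inequality to the drift term and the Burkholder--Davis--Gundy inequality to the stochastic integrals would give $\Psi _M(t)\leq K_T(1+\int ^t_0\Psi _M(r)\,dr)$ on $[0,T]$ with $K_T$ independent of $M$. To invoke Gronwall's lemma it remains to check $\Psi _M(T)<\infty $, and for this it is enough that $\sup _{0\leq t\leq T}|X^M_t|\leq M$ almost surely; I would deduce the latter from Proposition \ref {prop_range_XM}, the identity (\ref {eq_indicator}) and Proposition \ref {prop_moment_diff_X}, which give $\sup _{0\leq t\leq T}|X^{n,M}_t|\leq M+\max _{0\leq k\leq [nT]}|X^{n,M}_{(k+1)/n}-X^{n,M}_{k/n}|$ with the last term tending to $0$ in probability, by passing to the weak limit along the subsequence along which the truncated processes converge (using continuity of $w\mapsto |w|_T$ and the portmanteau theorem) and letting the threshold shrink. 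Then Gronwall gives $\Psi _M(T)\leq K_Te^{K_TT}$, a bound independent of $M$, which is the assertion.

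I expect the main difficulty to be the circular coupling among $X^M$, the $Y^{M,i}$ and the regulators $L^{M,j}$: there is no closed equation for $X^M$ alone, so the estimates must be arranged in the order ``control $X^M$ by $Y^M$ and the $L^{M,j}$; then control $L^{M,j}$ by $Y^M$; then control $Y^M$ by $X^M$ with linear growth'' before Gronwall can close the loop. A secondary point that needs care is that every constant produced along the way must be checked to be genuinely independent of $M$, which is precisely why the relevant hypothesis here is the linear growth condition [A6] rather than the $M$-dependent moment bound [A2] that underlies Proposition \ref {prop_bdd_Z}.
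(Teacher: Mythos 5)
Your proposal is correct and follows essentially the same route as the paper: control the regulator terms through the Shashiashvili-type bound (\ref{diff_L_M}), invoke the $M$-uniform linear growth coming from [A1] and [A6] together with H\"older and the Burkholder--Davis--Gundy inequality, and close the loop with Gronwall. The only (minor) difference is how the a priori finiteness needed to run Gronwall is secured --- you argue $|X^M|_T\leq M$ almost surely by passing the truncation bound to the weak limit, whereas the paper localizes with the stopping times $\tau _R=\inf \{ t\geq 0 : |X^M_t|\geq R\}$ and lets $R\rightarrow \infty $; both devices work.
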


\begin{proof} 
Take any $R > 0$ and set $\tau _R = \inf \{ t\geq 0\ ; \ |X^M_t|\geq R \}$ and 
$m^R_t = \E \hspace{0mm}^M[\sup _{0\leq r\leq \min \{ t, \tau _R\} }|X^M_r|^4]$. 
From (\ref {diff_L_M}), we see that 
\begin{eqnarray}\nonumber 
&&\sum ^{N_1}_{i = 1}\sup _{s\leq r\leq t}\left| \int ^r_s\tilde{\alpha }^i(u, X^M)dL^{M, i}_u \right| ^2 \leq  
\frac{1}{(N-1)^2\delta ^2_0}\sum ^{N_1}_{i = 1}|L^{M, i}_t - L^{M, i}_s|^2
\\\label{ineq_diff_LM}&&\leq 
\frac{\hat{K}}{(N-1)^2\delta ^2_0}\sum ^{N_1}_{i = 1}\sup _{s\leq r\leq t}
\left| \int ^r_s\hat{b}^{M, i}(v, X^M)dv + 
\sum ^N_{j = 1}\int ^r_01_{[s, \infty )}(v)\hat{\sigma }^{M, ij}(v, X^M)dB^{M, j}_v\right| ^2, \hspace{15mm}
\end{eqnarray}
hence [A6], Proposition \ref {prop_SDER_M}, 
the H\"older inequality, and the Burkholder--Davis--Gundy inequality imply 
\begin{eqnarray*}
m^R_t &\leq & 
C\Bigg\{ 1 + T^3\sum ^N_{i = 0}\int ^t_0
\E \hspace{0mm}^M[1_{\{\tau _R\geq r\}}\sup _{0\leq s\leq r}|\hat{b}^{M, i}(s, X^M)|^4]dr\\&&\hspace{7mm} + 
T\sum ^N_{i = 0}\sum ^N_{j = 1}\int ^t_0
\E \hspace{0mm}^M[1_{\{\tau _R\geq r\}}\sup _{0\leq s\leq r}|\hat{\sigma }^{M, ij}(s, X^M)|^4]dr
\Bigg\} \\
&\leq & 
C'_T\left\{ 1 + \int ^t_0m^R_rdr\right\} , \ \ t\leq T
\end{eqnarray*}
for some $C > 0$ and $C'_T > 0$. Then we apply the Gronwall inequality to get 
$m^R_T = 0$. 
Our assertion is now obtained by letting $R\rightarrow \infty $. 
\end{proof}

The inequality (\ref {diff_L_M}) and Proposition \ref {prop_bdd_XM} immediately 
give the following proposition. 

\begin{prop} \ \label{prop_bdd_LM}$\sup _M\E \hspace{0mm}^M[(L^M_T)^4] < \infty $ for all $T>0$. 
\end{prop}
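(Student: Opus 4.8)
The plan is to chain together inequality (\ref{diff_L_M}), Proposition \ref{prop_equal_Y_Z}, and Proposition \ref{prop_bdd_XM}; the asserted bound is really just bookkeeping once these are in hand. First, I would apply (\ref{diff_L_M}) with $s = 0$: since $L^{M,i}_0 = 0$ and $Y^{M,i}_0 = \Phi^i$ by Proposition \ref{prop_equal_Y_Z}, this gives $\sum^{N_1}_{i=1}|L^{M,i}_t|^2 \leq \hat{K}\sup_{0\leq r\leq t}\sum^{N_1}_{i=1}|Y^{M,i}_r - \Phi^i|^2$, and hence
\begin{eqnarray*}
\E ^M[(L^M_T)^4] \leq C\,\hat{K}^2\sum^{N_1}_{i=1}\E ^M\Big[\sup_{0\leq t\leq T}|Y^{M,i}_t - \Phi^i|^4\Big]
\end{eqnarray*}
for a constant $C$ depending only on $N_1$. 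So it suffices to bound $\E ^M[\sup_{0\leq t\leq T}|Y^{M,i}_t - \Phi^i|^4]$ uniformly in $M$.

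Second, by Proposition \ref{prop_equal_Y_Z} the difference $Y^{M,i}_t - \Phi^i$ equals $Z^{M,i}_t$ minus a stochastic integral of the coefficients $\alpha^i(r,X^M)/\bar{\alpha}(r,X^M)$ against $Z^{M,1},\dots,Z^{M,N}$; by [A1] one has $\alpha^i(r,X^M) \in [\delta_0,K_0]$ and $\alpha^i \leq \bar{\alpha}$, so these coefficients lie in $(0,1]$, and it is enough to show $\sup_M\E ^M[\sup_{0\leq t\leq T}|Z^{M,i}_t|^4] < \infty$. For this I would use the semimartingale decomposition recorded just before Proposition \ref{prop_equal_Y_Z} together with (\ref{mart_rep}), namely $Z^{M,i}_t = \int^t_0\tilde{\beta}^{M,i}(r,X^M)\,dr + N^{M,i}_t$ with $N^{M,i}_t = \sum^N_{j=1}\int^t_0\psi_M(X^M_r)\sigma^{ij}(r,X^M)\,dB^{M,j}_r$. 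The H\"older inequality controls the drift part by $T^3\int^T_0|\tilde{\beta}^{M,i}(r,X^M)|^4\,dr$, and the Burkholder--Davis--Gundy inequality controls the martingale part by $C\,\E ^M\big[(\int^T_0\sum^N_{j=1}\psi_M(X^M_r)^2\sigma^{ij}(r,X^M)^2\,dr)^2\big] \leq C\,T\int^T_0\E ^M\big[(\sum^N_{j=1}\sigma^{ij}(r,X^M)^2)^2\big]\,dr$, using $\psi_M \leq 1$.

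Third, by [A6] (recall $\tilde{\beta}^{M,i}(t,w) = \psi_M(w(t))\beta^i(t,w) + \psi_M(w(t))^3\tilde{\gamma}^i(t,w)$ and $\beta^i = \tilde{\beta}^i - \tilde{\gamma}^i$) together with $\psi_M \leq 1$, both $|\tilde{\beta}^{M,i}(r,X^M)|$ and $|\sigma^{ij}(r,X^M)|$ are dominated by $C_T(1 + |X^M|_r) \leq C_T(1 + \sup_{0\leq s\leq T}|X^M_s|)$, so the integrands above have fourth moments bounded by $C'_T(1 + \E ^M[\sup_{0\leq s\leq T}|X^M_s|^4])$. Proposition \ref{prop_bdd_XM} gives $\sup_M\E ^M[\sup_{0\leq s\leq T}|X^M_s|^4] < \infty$, so all of these bounds are uniform in $M$, and combining the three steps yields $\sup_M\E ^M[(L^M_T)^4] < \infty$. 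There is no real obstacle here; the only point deserving care is that the second term in $Y^{M,i}$ is a stochastic integral against the semimartingale $Z^{M,j}$ and not directly against Brownian motion, so one must first expand $dZ^{M,j}$ into its drift and martingale parts before applying Burkholder--Davis--Gundy, while keeping in mind that the cutoff $\psi_M$ only ever helps (it is $\leq 1$) and that the genuine uniformity in $M$ is supplied entirely by Proposition \ref{prop_bdd_XM}.
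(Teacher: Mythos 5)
Your proof is correct and follows essentially the same route as the paper, which derives the proposition ``immediately'' from the inequality (\ref{diff_L_M}) applied at $s=0$ together with Proposition \ref{prop_bdd_XM}; the intermediate bookkeeping you supply (expressing $Y^{M,i}-\Phi^i$ through $Z^{M,i}$ via Proposition \ref{prop_equal_Y_Z}, splitting $Z^{M,i}$ into drift and martingale parts, and invoking [A6] with H\"older and Burkholder--Davis--Gundy) is exactly the computation the paper itself carries out in (\ref{ineq_diff_LM}) and (\ref{temp_tight_M_1}). Nothing further is needed.
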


\begin{prop} \ \label{prop_tight_M}The family of processes $(X^M, \varphi ^M, Y^M, Z^M, L^M)_M$ 
is tight on $\mathcal {C}^{2 + 2N + N_1}$. 
\end{prop}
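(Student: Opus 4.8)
The plan is to prove tightness of each of the processes $X^M$, $\varphi^M$, $Y^M$, $Z^M$ and $L^M$ separately (in fact component by component) and then to conclude, since a family of $\mathcal{C}^d$-valued random elements is tight on $\mathcal{C}^d$ as soon as each of its one-dimensional coordinate families is tight on $\mathcal{C}$. All the initial values $X^M_0 = x_0$, $\varphi^{M,i}_0 = \Phi^i$, $Y^{M,0}_0 = x_0$, $Y^{M,i}_0 = \Phi^i$, $Z^{M,i}_0 = 0$ and $L^{M,i}_0 = 0$ are deterministic, so by Theorem 7.3 in \cite{Billingsley} it suffices, for each of these coordinate processes $W^M$ and each $T > 0$, to verify that $\lim_{\delta\to 0}\limsup_{M\to\infty}P^M(w_T(\delta; W^M)\ge\varepsilon) = 0$ for every $\varepsilon > 0$. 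For $Z^M$ and $Y^M$ I would obtain this from an $M$-uniform fourth-moment bound on the increments; for $L^M$, $X^M$ and $\varphi^M$ I would transport the estimates through the structural identities (\ref{diff_L_M}), (\ref{eq_X_M}) and (\ref{eq_phi_M}).

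First I would treat $Z^M$. By (\ref{def_N}), (\ref{mart_rep}) and Proposition \ref{prop_martingale}, $Z^{M,i}_t = N^{M,i}_t + \int^t_0\tilde\beta^{M,i}(r, X^M)dr$, where $N^{M,i}$ is a continuous martingale with $\langle N^{M,i}\rangle_t = \int^t_0\psi_M(X^M_r)^2 a^{ii}(r, X^M)dr$. Since $\psi_M\le 1$, and since $\beta^i = \tilde\beta^i - \tilde\gamma^i$ so that [A6] bounds $|\tilde\beta^{M,i}(t,w)|$ and $|\sigma^{ij}(t,w)|$ by a constant multiple of $1 + |w|_t$ on $[0,T]$, the Burkholder--Davis--Gundy inequality applied to $N^{M,i}$ and the H\"older inequality applied to the drift term give $\E\hspace{0mm}^M[|Z^{M,i}_t - Z^{M,i}_s|^4]\le C_T\,\E\hspace{0mm}^M[(1 + |X^M|_T)^4]\,(t-s)^2$ for $0\le s\le t\le T$. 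By Proposition \ref{prop_bdd_XM}, $\sup_M\E\hspace{0mm}^M[(1 + |X^M|_T)^4] < \infty$, so this bound is uniform in $M$, and the standard argument (cf.\ the proof of Proposition \ref{prop_tight}) gives the tightness of $(Z^M)_M$. The same reasoning applies to $(Y^M)_M$ via Proposition \ref{prop_equal_Y_Z}: each $Y^{M,i}$ is an affine functional of $Z^M$, obtained by integrating against the coefficients $1/\bar\alpha(r, X^M)$ and $\alpha^i(r, X^M)/\bar\alpha(r, X^M)$, which by [A1] and [A4] lie in $[1/(NK_0), 1/(N\delta_0)]$ and $[\delta_0/(NK_0), K_0/(N\delta_0)]$ respectively and hence are bounded; splitting these stochastic integrals into their drift and Brownian parts exactly as above gives $\E\hspace{0mm}^M[|Y^{M,i}_t - Y^{M,i}_s|^4]\le C_T(t-s)^2$ uniformly in $M$, hence the tightness of $(Y^M)_M$.

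Next I would handle $(L^M)_M$. Here (\ref{diff_L_M}) is not a Kolmogorov-type estimate, since its right-hand side involves a supremum over $[s,t]$; instead, for $|t-s|\le\delta$ it gives $\sum^{N_1}_{i=1}|L^{M,i}_t - L^{M,i}_s|^2\le\hat K\sum^{N_1}_{j=1}w_T(\delta; Y^{M,j})^2$, so that $w_T(\delta; L^{M,i})\le\sqrt{\hat K}\,\bigl(\sum^{N_1}_{j=1}w_T(\delta; Y^{M,j})^2\bigr)^{1/2}$ for each $i$. Since $(Y^M)_M$ is tight and $L^{M,i}_0 = 0$, this forces $\lim_{\delta\to 0}\limsup_M P^M(w_T(\delta; L^{M,i})\ge\varepsilon) = 0$, i.e.\ the tightness of $(L^M)_M$. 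Finally, $X^M$ and $\varphi^M$ are controlled through (\ref{eq_X_M}) and (\ref{eq_phi_M}): because $N_1 < N$, [A1] and [A4] make the coefficients $\tilde\alpha^i(r, X^M) = 1/(\bar\alpha(r, X^M) - \alpha^i(r, X^M))$ and $Q^{ij}(r, X^M)$ bounded by $1/((N-1)\delta_0)$ and $K_0/((N-1)\delta_0)$, so that the terms $\int^t_0\tilde\alpha^i(r,X^M)dL^{M,i}_r$ and $\int^t_0 Q^{ij}(r,X^M)dL^{M,j}_r$ have increments dominated by constant multiples of the increments of $L^M$; hence the modulus of continuity of $X^M$ (resp.\ $\varphi^M$) is dominated by that of $Y^{M,0}$ (resp.\ $Y^M$) plus a constant multiple of that of $L^M$, and all of these are tight. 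Assembling the estimates for all five blocks then gives the assertion.

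The step I expect to be the main obstacle is the one for the regulator $L^M$: the reflection estimate (\ref{diff_L_M}) controls the increments of $L^M$ only by the \emph{oscillation} of $Y^M$ over an interval, not by a single increment, so Kolmogorov's criterion cannot be applied to $L^M$ directly, and the argument must instead be routed through the modulus of continuity and the already-established tightness of $(Y^M)_M$. The second delicate point is the uniformity in $M$ of every moment estimate, which is precisely where the truncation bound $\psi_M\le 1$ together with the $M$-uniform fourth-moment bound of Proposition \ref{prop_bdd_XM} convert the linear-growth hypothesis [A6] into constants independent of $M$.
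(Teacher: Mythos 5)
Your proof is correct, and its overall architecture (uniform-in-$M$ fourth-moment bounds for the semimartingale parts, obtained from [A6], $\psi_M\le 1$, Proposition \ref{prop_bdd_XM}, BDG and H\"older, then propagation to the reflected components through (\ref{diff_L_M}), (\ref{eq_X_M}), (\ref{eq_phi_M}) and the bounds $\tilde{\alpha }^i\leq 1/((N-1)\delta _0)$, $Q^{ij}\leq K_0/((N-1)\delta _0)$) matches the paper's. The one genuine divergence is the treatment of $L^M$. You assert that the supremum on the right-hand side of (\ref{diff_L_M}) blocks a Kolmogorov-type estimate for $L^M$ and therefore reroute the argument through the modulus of continuity and Billingsley's Theorem 7.3, using the already-established tightness of $(Y^M)_M$. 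That detour works, but it is not necessary: since the supremum in (\ref{diff_L_M}) is over increments anchored at $s$, BDG and H\"older give $\E \hspace{0mm}^M[\sup _{s\leq r\leq t}|Y^{M, i}_r - Y^{M, i}_s|^4]\leq C_T(t-s)\int ^t_s(1 + \E \hspace{0mm}^M[\sup _{0\leq v\leq r}|X^M_v|^4])dr\leq C'_T(t-s)^2$ uniformly in $M$, so (\ref{diff_L_M}) immediately yields $\E \hspace{0mm}^M[|L^{M, i}_t - L^{M, i}_s|^4]\leq \hat{K}^2C'_T(t-s)^2$, and the paper then feeds \emph{all} components ($X^M$, $\varphi ^M$, $Y^M$, $Z^M$, $L^M$) into the single moment criterion of Theorem 2.3 in \cite{Stroock-Varadhan}. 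Your version trades this one-shot estimate for a two-stage argument (moments for $Y^M, Z^M$; continuity transport for the rest); it is slightly longer but requires nothing beyond what you already established, and the coordinate-wise reduction you invoke is legitimate for the uniform topology on $\mathcal {C}^d$. So the only substantive correction is to your closing remark: the oscillation on the right of (\ref{diff_L_M}) is not an obstacle to the moment criterion, precisely because $\E \hspace{0mm}^M[\sup _{s\leq r\leq t}|Y^{M,i}_r - Y^{M,i}_s|^4]$ itself admits the $(t-s)^2$ bound.
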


\begin{proof} 
By (\ref {diff_L_M}), (\ref {ineq_diff_LM}), and 
a calculation similar to that in the proof of the above proposition, we have 
\begin{eqnarray}\nonumber 
&&\E \hspace{0mm}^M[|X^M_t - X^M_s|^4] + 
\sum ^N_{i = 1}E \hspace{0mm}^M[|\varphi ^{M, i}_t - \varphi ^{M, i}_s|^4] + 
\sum ^{N_1}_{i = 1}\E \hspace{0mm}^M[|L^{M, i}_t - L^{M, i}_s|^4]\\\nonumber &&
\leq 
C\sum ^N_{i = 0}\E \hspace{0mm}^M[\sup _{s\leq r\leq t}|Y^{M, i}_t - Y^{M, i}_s|^4]\leq 
C'_T(t-s)\int ^t_s(1 + \E \hspace{0mm}^M[\sup _{0\leq v\leq r}|X^M_v|^4])dr\\\label{temp_tight_M_1}&&
\leq 
C'_T(1 + \sup _M\E \hspace{0mm}^M[\sup _{0\leq r\leq T}|X^M_r|^4])(t-s)^2, \ \ 0\leq s\leq t\leq T 
\end{eqnarray}
for some $C, C'_T > 0$. 
Similarly,
\begin{eqnarray}\label{temp_tight_M_2}
\E \hspace{0mm}^M[|Z^M_t - Z^M_s|^4] \leq 
C''_T(1 + \sup _M\E \hspace{0mm}^M[\sup _{0\leq r\leq T}|X^M_r|^4])(t-s)^2
\end{eqnarray}
for some $C''_T > 0$. 
The inequalities (\ref {temp_tight_M_1})--(\ref {temp_tight_M_2}), 
Proposition \ref {prop_bdd_XM}, and Theorem 2.3 in \cite {Stroock-Varadhan} then give the assertion. 
\end{proof}

\begin{proof}[Proof of Theorem \ref {th_converge}.] 
By Proposition \ref {prop_tight_M}, 
we see that for any non-decreasing sequence $(M_k)_k$ 
there is a subsequence $(M_{k_l})_l\subset (M_k)_k$ and continuous processes 
$(X, \varphi , Y, Z, L)$ on a certain probability space $(\Omega , \mathcal {F}, P)$ such that 
\begin{eqnarray}\label{weak_conv_M}
(X^{M_{k_l}}, \varphi ^{M_{k_l}}, Y^{M_{k_l}}, Z^{M_{k_l}}, L^{M_{k_l}})\ \longrightarrow \ 
(X, \varphi , Y, Z, L), \ \ l\rightarrow \infty \ \ \mbox {weakly}. 
\end{eqnarray}
We define $(N^i_t)_t$ by 
\begin{eqnarray}\label{def_N1}
N^i_t = Z^i_t - \int ^t_0\tilde{\beta }^i(r, X)dr. 
\end{eqnarray}
As in Proposition \ref {prop_martingale}, we get the weak convergence 
$(X^{M_{k_l}}, Z^{M_{k_l}}, L^{M_{k_l}}, N^{M_{k_l}}) \longrightarrow (X, Z, L, N)$ 
by Proposition \ref {prop_tight_M}. 
Thus, using (\ref {temp_mart_N_1})--(\ref {temp_mart_N_2}) and the martingale representation theorem, 
we can find an $N$-dimensional $(\mathcal {F}_t)_t$-Brownian motion $(B_t)_t$ on 
a certain filtered space $(\hat{\Omega }, \hat{\mathcal {F}}, (\hat{\mathcal {F}}_t)_t, \hat{P} )$,
which contains the original probability space $(\Omega , \mathcal {F}, P)$, such that 
\begin{eqnarray}\label{rep_N}
N^i_t = \sum ^N_{j = 1}\int ^t_0\sigma ^{ij}(r, X)dB^j_r. 
\end{eqnarray}
for $i = 1, \ldots , N$. 
As in Proposition \ref {prop_equal_Y_Z}, we get 
\begin{eqnarray}\label{eq_Y_Z}
Y^0_t = x_0 + \sum ^N_{j = 1}\int ^t_0\frac{1}{\bar{\alpha }(r, X)}dZ^j, \ \ 
Y^i_t = \Phi ^i + Z^i_t - \int ^t_0\frac{\alpha ^i(r, X)}{\bar{\alpha }(r, X)}dZ^j, \ \ i = 1, \ldots , N. 
\end{eqnarray}
Moreover, by (\ref {temp_comp_0}), Proposition \ref {prop_bdd_LM}, and Theorem 2.2 of \cite{Kurtz-Protter}, 
we get 
\begin{eqnarray}\label{property_phi_L}
\int ^\infty _0\varphi ^i_tdL^i_t = 0, \ \ i = 1, \ldots , N_1. 
\end{eqnarray}
By (\ref {weak_conv_M})--(\ref {property_phi_L}) and Proposition \ref {prop_SDER_M}, 
we see that $(X, \varphi , L)$ is a solution of our SDER (\ref {SDE_reflection}). 
Since [A7] implies that the distribution of $(X, \varphi , L)$ is uniquely determined, 
we get the weak convergence 
$(X^M, \varphi ^M, L^M)\longrightarrow (X, \varphi , L)$ as $M\rightarrow \infty $. 
The proof of Theorem \ref {th_converge} can be completed
using the arguments in step (vi) of the proof of Theorem 3 in \cite {Kesten-Papanicolaou}. 
\end{proof}

\end{document}